\documentclass{fundam}
\usepackage{amssymb}
\usepackage{a4}
\usepackage{QED}
\begin{document}
\setcounter{page}{1001}
\issue{XXI~(2001)}


\newcommand {\es}{\emptyset}
\newcommand {\ra}{\tr}
\newcommand {\ras}{\tr^*}
\newcommand {\st}{ \rightarrow _{st}}
\newcommand {\g}{\gamma}
\newcommand {\G}{\Gamma}
\newcommand {\m}{\mu}
\newcommand {\de}{\delta}
\newcommand {\be}{\beta}
\newcommand {\al}{\alpha}
\newcommand {\sig}{\sigma}
\newcommand {\la}{\lambda}
\newcommand {\ptv}{\; ..., \;}
\newcommand {\pts}{ \; ... \; }
\newcommand {\peq}{\preceq}
\newcommand {\pe}{\prec}

\def\a{\alpha}
\def\b{\beta}
\def\g{\gamma}
\def\G{\Gamma}
\def\D{\triangle}
\def\t{\tau}
\def\d{\delta}
\def\th{\theta}
\def\l{\lambda}
\def\L{\Lambda}
\def\n{\nu}
\def\di{\diamond}
\def\r{\rho}
\def\ph{\phi}
\def\ps{\psi}
\def\ep{\epsilon}
\def\m{\mu}
\def\s{\sigma}
\def\sou{\overline}
\def\so{\underline}
\def\O{\Omega}
\def\o{\omega}
\def\ep{\varepsilon}
\def\f{\rightarrow}
\def\tr{\triangleright}
\def\q{\forall}
\def\e{\exists}
\def\v{\vdash}
\def\pe{\succeq}
\def\p{\succ}
\def\vi{\emptyset}
\def\ou{\vee}
\def\et{\wedge}
\def\<{\langle}
\def\>{\rangle}
\def\F{\displaystyle\frac}

\title{Arithmetical proofs of strong normalization results for symmetric $\lambda$-calculi}

\address{Ren\'e David.
Laboratoire de Mathé\'ematiques.
 Campus scientifique.
F-73376 Le Bourget du Lac. email : david@univ-savoie.fr}

\author{Ren\'e David \& Karim Nour\\
Laboratoire de Mathé\'ematiques\\
 Universit\'e de Savoie\\
73376 Le Bourget du Lac. France\\
\{david,nour\}@univ-savoie.fr} \maketitle

\runninghead{R. David, K. Nour}{Arithmetical proofs of strong normalization results}

\begin{abstract}
We give {\em arithmetical} proofs of the strong normalization of
two  symmetric $\lambda$-calculi corresponding to classical logic.\\
The first one is the $\overline{\lambda}\mu\tilde{\mu}$-calculus
introduced by Curien \& Herbelin. It is derived via the
Curry-Howard correspondence from Gentzen's classical sequent
calculus LK in order to have a symmetry on one side between
``program'' and ``context'' and on other side between
``call-by-name'' and ``call-by-value''. \\
The second one is the
symmetric $\lambda \mu$-calculus. It is the $\lambda \mu$-calculus
introduced by Parigot in which the reduction rule $\m'$, which is
the symmetric of $\m$, is added.\\
These results were already known but the previous proofs use
candidates of reducibility where the interpretation of a type is
defined as the fix point of some increasing operator and thus, are
highly non arithmetical.\\
\end{abstract}

\begin{keywords}
$\lambda$-calculus, symmetric calculi, classical logic, strong
normalization.
\end{keywords}

\section{Introduction}

Since it has been understood that  the Curry-Howard correspondence
relating proofs and programs  can be extended to classical logic
(Felleisen \cite{Fel}, Griffin \cite{Gri}), various  systems have
been introduced: the $\l_c$-calculus (Krivine \cite{Kri}), the
$\la_{exn}$-calculus (de Groote \cite{deG4}), the $\l
\mu$-calculus (Parigot \cite{Par1}), the $\lambda^{Sym}$-calculus
(Barbanera \& Berardi \cite{BaBe}), the
$\lambda_{\Delta}$-calculus (Rehof \& Sorensen \cite{ReSo}), the
$\overline{\lambda}\mu\tilde{\mu}$-calculus (Curien \& Herbelin
\cite{cuhe}), the dual calculus (Wadler \cite{wad1}), ... Only a
few of them have computation rules that correspond to the symmetry
of classical logic.

We consider here the $\overline{\lambda}\mu\tilde{\mu}$-calculus
and the symmetric $\l \mu$-calculus and we give arithmetical
proofs of the strong normalization of the simply typed calculi.
Though essentially the same proof can be done for the
$\lambda^{Sym}$-calculus, we do not consider here this calculus
since it is somehow different from the previous ones: its main
connector is not the arrow but the connectors {\em or} and {\em
and} and the symmetry of the calculus comes from the de Morgan
laws. This proof will appear in Battyanyi's PhD thesis \cite{Bat}
who will also consider the dual calculus. Note that  Dougherty \&
all \cite{Dou} have shown the strong normalization of this
calculus by the reducibility method using the technique of the
fixed point construction.

The first proof of strong normalization for a symmetric calculus
is the one by  Barbanera \& Berardi for the
$\lambda^{Sym}$-calculus. It uses candidates of reducibility but,
unlike the usual construction (for example for Girard's system
$F$), the definition of the interpretation of a type needs a
rather complex fix-point operation. Yamagata \cite{Yam} has used
the same technic to prove the strong normalization of the
symmetric $\la\m$-calculus where the types are those of system $F$
and Parigot, again using the same ideas, has extended Barbanera \&
Berardi's result to a logic with second order quantification.
Polonovsky, using the same technic, has proved in \cite{polo} the
strong normalization of the
$\overline{\lambda}\mu\tilde{\mu}$-reduction.  These proofs are
highly non arithmetical.

The two proofs that we give are essentially the same but the proof
for the $\overline{\lambda}\mu\tilde{\mu}$-calculus is much
simpler since some difficult problems that appear in the
$\l\m$-calculus do not appear in the
$\overline{\lambda}\mu\tilde{\mu}$-calculus. In the
$\overline{\lambda}\mu\tilde{\mu}$-calculus, a $\m$ or a $\l$
cannot be created at the root of a term by a reduction but this is
not the case for the symmetric $\l\m$-calculus. This is mainly due
to the fact that, in the former, there is a right-hand side and a
left-hand side whereas, in the latter, this distinction is
impossible since a term on the right of an application can go on
the left of an application after some reductions.

The idea of the proofs  given here comes from the one given by the
first author for the simply typed $\l$-calculus : assuming that a
typed term has an infinite reduction, we can define,  by looking
at some particular steps of this reduction,  an infinite sequence
of strictly decreasing types.  This proof can be found either in
\cite{dav1} (where it appears among many other things) or as a
simple unpublished note on the web page of the first author
 (\verb www.lama.univ-savoie.fr/~david ~).

We also show the strong normalization of the
$\mu\tilde{\mu}$-reduction (resp. the $\m\m'$-reduction) for the
un-typed calculi. The first result was already known and it can be
found in \cite{polo}. The proof is done (by using candidates of
reducibility and a fix point operator) for a typed calculus but,
in fact, since the type system is such that every term is typable,
the result is valid for every term. It was known that, for the
un-typed $\la\m$-calculus, the $\m$-reduction is strongly
normalizing (see \cite{Py}) but the strong normalization of  the
$\m\m'$-reduction was an open problem raised long ago by Parigot.
Studying this reduction by itself is interesting since a $\m$ (or
$\m'$)-reduction can be seen as  a way ``to put the arguments of
the $\m$ where they are used'' and it is useful to know that this
is terminating.

This paper is an extension of \cite{tlca}. In particular, section
4 essentially appears there. It is organized as follows. Section 2
gives the syntax of the terms of the
$\overline{\lambda}\mu\tilde{\mu}$-calculus and the symmetric
$\l\m$-calculus and their reduction rules. Section 3 is devoted to
the proof of the normalization results for the
$\overline{\lambda}\mu\tilde{\mu}$-calculus and section 4 for the
symmetric $\l\m$-calculus. We conclude in section 5 with some
remarks and  future work.

\section{The calculi}

\subsection{The
$\overline{\lambda}\mu\tilde{\mu}$-calculus}\label{1s2}

\subsubsection{The un-typed calculus}

There are three kinds of terms, defined by the following grammar,
and there are two kinds of variables.
 In the literature, different authors use different terminology. Here, we will call them either $c$-terms, or $l$-terms
or $r$-terms. Similarly, the variables will be called either
$l$-variables (and denoted as $x,y,...$) or  $r$-variables (and
denoted as $\al, \be, ...$). \\In the rest of the paper, by term
we will mean any of these three kind of terms.
\begin{center}

\begin{tabular}{ccccccccc}
$c$   &::= & $\langle t_l , t_r \rangle$ & \, &\, &\, &\, &\, &\, \\
$t_l$ &::= & $x$ &$\mid$ & $\l x \, t_l$ &$\mid$ &$\mu \a \, c$ &$\mid$ & $t_r.t_l$ \\
$t_r$ &::= &$\a$ &$\mid$ & $\l \a \, t_r$ &$\mid$ &$\mu  x \, c$
&$\mid$ & $t_l.t_r$
\end{tabular}
\end{center}

\begin{remark}
$t_l$ (resp. $t_r$) stands of course for the left (resp. right)
part of a $c$-term. At first look, it may be strange that, in the
typing rules below, left terms appear in the right part of a
sequent and vice-versa. This is just a matter of convention and an
other choice could have been done. Except the change of name (done
to  make easier the analogy between the proofs for
$\overline{\lambda}\mu\tilde{\mu}$-calculus  and the symmetric
$\l\m$-calculus) we have respected the notations of the literature
on this calculus.
\end{remark}

\subsubsection{The typed calculus}

The logical part of this calculus is the (classical) sequent
calculus which is, intrinsically, symmetric. The types are built
from atomic formulas  with the connectors $\rightarrow$ and $-$
where the intuitive meaning of $A-B$ is ``$A$ and not $B$''. The
typing system is a sequent calculus based on judgments of the
following form:
\begin{center}
$c : (\G \v \D)$   $\;\;\;\;\;\;\;\;\;$ $\G \v \fbox{$t_l : A$},
\D$ $\;\;\;\;\;\;\;\;\;$ $\G, \fbox{$t_r : A$} \v \D$
\end{center}
where $\G$ (resp. $\D$) is a $l$-context (resp. a $r$-context),
i.e. a set of declarations of the form $x : A$ (resp. $\al : A$)
where $x$ (resp. $\al$) is a $l$-variable (resp. a $r$-variable)
and $A$ is a
type.\\

\begin{minipage}[t]{200pt}
$\F{}{\G, x : A \v \fbox{$x : A$}\, , \D }$\\
\end{minipage}
\begin{minipage}[t]{200pt}
$\F{}{\G , \,\fbox{$\a : A$} \v \a : A , \D }$\\[0.5cm]
\end{minipage}

\begin{minipage}[t]{200pt}
$\F{\G, x : A \v \fbox{$t_l : B$}\, , \D}{\G\v \fbox{$\l x \, t_l : A \f B$}\,, \D}$\\
\end{minipage}
\begin{minipage}[t]{200pt}
$\F{\G \v \fbox{$t_l : A$}\, , \D \;\;\; \G , \,  \fbox{$t_r : B
$}\v
    \D}{\G , \, \fbox{$t_l.t_r : A \f B$} \v \D}$\\[0.5cm]
\end{minipage}

\begin{minipage}[t]{200pt}
$\F{\G \v \fbox{$t_l : A$} \, , \D \;\;\; \G ,\, \fbox{$t_r : B$}
\v \D}{\G \v \fbox{$ t_r.t_l : A - B$} \,, \D}$
\end{minipage}
\begin{minipage}[t]{200pt}
$\F{\G , \,  \fbox{$t_r : A$} \v \a : B , \D}{\G , \, \fbox{$\l \a \, t_r : A - B$} \v \D}$\\[0.5cm]
\end{minipage}

$$\F{\G \v \fbox{$t_l : A$}\, , \D \;\;\; \G \, , \fbox{$t_r : A$} \v
\D}{\langle t_l , t_r \rangle : (\G \v \D)}$$

\begin{minipage}[t]{220pt}
$\F{c : (\G \v \a : A , \D)}{\G \v \fbox{$\mu \a \, c : A$}\,,  \D}$\\
\end{minipage}
\begin{minipage}[t]{200pt}
$\F{c : (\G , x : A \v  \D)}{\G , \, \fbox{$\mu x \, c : A$} \v \D}$\\[0.5cm]
\end{minipage}

\subsubsection{The reduction rules}

The cut-elimination procedure (on the logical side) corresponds to
the reduction rules (on the terms) given below.

\begin{itemize}
\item  $\langle \l x \, t_l , t'_l.t_r \rangle \tr_{\;\l} \langle t'_l, \mu
  x \, \langle t_l , t_r \rangle \rangle$
\item  $\langle  t'_r.t_l , \l \a \, t_r \rangle \tr_{\;\overline{\l}}
  \langle \mu \a  \,  \langle t_l , t_r \rangle , t'_r\rangle$
\item $\langle \mu \a \, c , t_r \rangle \tr_{\;\mu} c[\a := t_r]$
\item  $\langle  t_l , \mu x \, c \rangle \tr_{\;\tilde{\mu}} c[x := t_l]$
\item  $ \mu \a \, \langle t_l , \a \rangle \tr_{\;s_l} t_l$
  $\;\;\;\;\;\;\;\;$ if $\a \not \in Fv(t_l)$
\item $\mu x \, \langle  x ,  t_r \rangle \tr_{\;s_r} t_r$   $\;\;\;\;\;\;\;$ if $x \not \in Fv(t_r)$
\end{itemize}

\begin{remark}

It is easy to show that the $\m\tilde{\mu}$-reduction is not
confluent. For example $\langle \m \al \, \langle x , \be \rangle
,$ $\m y \, \langle x , \al \rangle
\rangle$ reduces both to $\langle x , \be \rangle$ and to $\langle x , \al \rangle$.\\
\end{remark}

\begin{definition}
\begin{itemize}
  \item We denote by $\tr_{\;l}$ the reduction by one of the logical rules i.e.
$\tr_{\;\l}$, $ \tr_{\;\overline{\l}}$, $\tr_{\;\mu}$ or
$\tr_{\;\tilde{\mu}}$.
  \item We denote by $\tr_{\;s}$ the reduction by one of the
simplification rules  i.e. $\tr_{\;s_l}$ or $\tr_{\;s_r}$

\end{itemize}

\end{definition}

\subsection{The symmetric  $\l \mu$-calculus}

\subsubsection{The un-typed calculus}

The set (denoted as ${\cal T}$) of $\l\m$-terms or simply terms is
defined by the following grammar where $x,y,...$ are
$\lambda$-variables and $\al, \be, ...$ are $\mu$-variables:
$$
{\cal T} ::= x \mid \l x {\cal T} \mid ({\cal T} \; {\cal T}) \mid
\mu \al {\cal T} \mid   (\al \; {\cal T})
$$

Note that we adopt here a more liberal syntax (also called de
Groote's calculus) than in the original calculus since we do not
ask that a $\m \al$ is immediately followed by a $(\be \; M)$
(denoted $[\be] M$ in Parigot's notation).

\subsubsection{The typed calculus}

The logical part of this calculus is natural deduction.
 The types are those of the
simply typed $\l\m$-calculus i.e. are built from atomic formulas
and the constant symbol $\perp$ with the connector $\rightarrow$.
As usual $\neg A$ is an abbreviation for $A \rightarrow \perp$.

 The typing rules are given below
where $\G$ is a context, i.e. a set of declarations of the form $x
: A$ and $\al : \neg A$ where $x$ is a $\l$ (or intuitionistic)
variable, $\al$ is a $\m$ (or classical) variable and $A$ is a
formula.

\begin{center}
$\F{}{\G , x : A \v x : A} \, ax$

\medskip
$\F{\G, x : A \v M : B} {\G \v \l x M : A \f B} \, \f_i$
\hspace{0.5cm}  $\F{\G \v M : A \f B \quad \G \v N : A} {\G \v (M
\; N): B }\, \f_e$

\medskip

$\F{\G , \al : \neg A  \v M : \bot} {\G \v \mu \al M : A }  \,
\bot_e$ \hspace{0.5cm} $\F{\G , \al : \neg A  \v M : A} {\G, \al :
\neg A  \v ( \al \; M) : \bot } \, \bot_i$
\medskip

\end{center}

Note that, here, we also have changed Parigot's notation but these
typing rules are those of his classical natural deduction. Instead
of writing
$$M : (A_1^{x_1 }, ..., A_n^{x_n }  \vdash   B,  C_1^{\al_1},
..., C_m^{\al_m })$$ we have written
$$x_1 : A_1, ..., x_n : A_n,
\al_1: \neg C_1, ..., \al_m : \neg C_m \vdash M : B$$

\subsubsection{The reduction rules}

The cut-elimination procedure (on the logical side) corresponds to
the reduction rules (on the terms) given below. Natural deduction
is not, intrinsically, symmetric but Parigot has introduced the so
called {\em Free deduction} \cite{Pa01} which is completely
symmetric. The  $\l \mu$-calculus comes from there. To get a
confluent calculus he had, in his terminology, to fix the inputs
on the left. To keep the symmetry, it is enough  to add a new
reduction rule (called the $\mu'$-reduction) which is the
symmetric rule of the $\mu$-reduction and also corresponds  to the
elimination of a
 cut.

\begin{itemize}
\item
$(\l x M \; N) \tr_{\beta} M[x:=N]$

\item $(\mu \al M \; N) \tr_{\mu} \mu \al M[\al=_r N]$

\item  $(N \; \mu \al M) \tr_{\mu'} \mu \al M[\al=_l N]$

\item  $ (\al \; \m\be M) \tr_{\rho}
M[\be:=\al]$

\item  $ \m\al (\al \; M)
\tr_{\theta} M$  if $\al$ is not free in $M$.

\end{itemize}
where $M[\al=_r N]$ (resp. $M[\al=_l N]$) is obtained by replacing
each sub-term of $M$ of the form $(\al \; U)$ by $(\al \; (U \;
N))$ (resp. $(\al \; (N \; U))$). This substitution is called a
$\m$-substitution (resp. a $\m'$-substitution).

\begin{remark}

\begin{enumerate}
  \item It is shown in \cite{Par1} that the $\be\m$-reduction is confluent
but neither $\m\m'$ nor $\be\m'$ is. For example $(\m \al x \, \m
\be y)$ reduces both to $\m \al x$ and to $\m \be y$. Similarly
$(\la z  x \; \m \be y)$ reduces both to $x$ and to $\m \be y$.

\item Unlike for a $\be$-substitution where, in $M[x:=N]$, the variable $x$ has disappeared it
 is important to note that, in a $\m$ or $\m'$-substitution, the variable $\al$ has not disappeared. Moreover its type has
 changed.  If the type of $N$ is $A$ and, in $ M $, the type of $\al$
 is $\neg (A\rightarrow B)$ it becomes $\neg B$ in $M[\al=_r N]$. If
 the type of $N$ is $A \rightarrow B$ and, in $ M$, the type of $\al$
 is $\neg A$ it becomes $\neg B$ in $M[\al=_l N]$.

 \item In section 4, we will {\em not} consider the rules
 $\theta$ and $\rho$.
  The rule $\theta$ causes no problem
since it is strongly normalizing and it is easy to see that this
rule can be postponed. However, unlike for the
$\overline{\lambda}\mu\tilde{\mu}$-calculus where all the
simplification rules can be postponed, this is not true for the
rule $\rho$  and, actually, Battyanyi has shown in \cite{Bat} that
$\m\m'\rho$ is {\em not} strongly normalizing. However he  has
shown that $\m\m'\rho$ (in the untyped case) and $\beta\m\m'\rho$
(in the typed case) are {\em weakly} normalizing.

\end{enumerate}

\end{remark}

\subsection{Some notations}

The following notations will be used for both calculi. It will
also be important to note that, in section 3 and 4, we will use
the same notations (for example $\Sigma_l, \Sigma_r$) for objects
concerning respectively the
$\overline{\lambda}\mu\tilde{\mu}$-calculus and the symmetric
$\l\m$-calculus. This is done intentionally to show the analogy
between the proofs.

\begin{definition}
Let $u,v$ be terms.
\begin{enumerate}
\item $cxty(u)$ is the number of symbols occurring in $u$.
\item We denote by  $u \leq v$ (resp. $u < v$) the fact that $u$ is a sub-term
(resp. a strict sub-term) of $v$.
\item A {\em proper} term is a term that is not a variable.
\item If $\sig$ is a substitution and
$u$ is a term, we denote by

\begin{itemize}
  \item $\sig +[x:=u]$ the substitution
$\sig'$ such that for $y \neq x$, $\sig'(y)=\sig(y)$ and
$\sig'(x)=u$
  \item  $\sig[x:=u]$ the substitution $\sig'$ such
that $\sig'(y)=\sig(y)[x:=u]$.

\end{itemize}

\end{enumerate}
\end{definition}

\begin{definition}
Let $A$ be a type. We denote by $lg(A)$ the number of symbols in
$A$.
\end{definition}

In the next sections we will study various reductions. The
following notions will correspond to these reductions.

\begin{definition}
 Let $\tr$ be a notion of reduction.
\begin{enumerate}

\item  The
 transitive (resp. reflexive and transitive) closure of $\tr$ is
 denoted by $\tr^+$ (resp. $\tr^*$). The length (i.e. the number of
 steps) of the reduction $t \ras t'$ is denoted by $lg(t \ras t')$.

\item If $t$ is in $SN$
  i.e. $t$ has no infinite reduction, $\eta(t)$ will denote the length
of the longest reduction starting from $t$ and $\eta c(t)$ will
denote  $(\eta(t), cxty(t))$.
\item We denote by $u \prec v$ the fact that $u \leq w$ for some $w$ such that $v \ras w$
   and either $v \tr^+ w$ or $u < w$. We denote by $\preceq$ the
   reflexive closure of $\prec$.

\end{enumerate}
\end{definition}

\begin{remark}

- It is easy to check that the relation $\peq$ is transitive, that
$u \preceq v$ iff $u \leq w$ for some $w$ such that $v \ras w$. We
can also prove (but we will not use it) that the relation $\peq$
is an order on the set $SN$.

-  If $v \in SN$ and $u \prec v$, then $u \in SN$ and $\eta c(u) <
\eta c(v)$.

- In the proofs  done by induction on some $k$-uplet of integers,
the order we consider is the lexicographic order.
\end{remark}

\section{Normalization for the $\overline{\lambda}\mu\tilde{\mu}$-calculus}

The following lemma will be useful.
\begin{lemma}\label{1tri}
Let $t$ be a $l$-term (resp. a $r$-term). If $t \in SN$, then
$\langle t,\al \rangle \in SN$ (resp.  $\langle x , t\rangle \in
SN)$.
\end{lemma}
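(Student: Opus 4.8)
The plan is to argue by contradiction: suppose $t \in SN$ but $\langle t, \al \rangle$ has an infinite reduction. I will analyze what such a reduction can look like. Since $t \in SN$, infinitely many steps of an infinite reduction from $\langle t, \al \rangle$ cannot all occur inside $t$; so at some point a reduction step must involve the root pairing $\langle\,\cdot\,, \al\rangle$. The key observation is that $\al$ is a variable, hence the only root rules that can fire when the right component is $\al$ are very limited: $\langle \mu x\, c, \cdot\rangle$ is not applicable (the left side would have to be $\mu x\,c$, but it is $t$ which reduces from something), and crucially $\tr_{\tilde\mu}$ requires the right component to be $\mu x\, c$, which $\al$ is not and cannot become; $\tr_\mu$ requires the left component to be $\mu\al' c$; $\tr_\lambda$ and $\tr_{\overline\lambda}$ require specific shapes. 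So the first root step, if any, must be $\langle \mu\be\, c, \al\rangle \tr_\mu c[\be := \al]$ preceded by $t \ras \mu\be\, c$.

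Concretely, here is the structure I would extract. Either the infinite reduction stays inside $t$ forever — impossible since $t \in SN$ — or we can write it as $\langle t, \al\rangle \ras \langle \mu\be\, c, \al\rangle \tr_\mu c[\be := \al] \ras \cdots$ where $t \ras \mu\be\, c$. Now $t \in SN$ gives $\mu\be\, c \in SN$, hence $c \in SN$ (as $c$ is a subterm, in fact $c \prec \mu\be\, c \preceq t$), and therefore $c[\be := \al] \in SN$: substituting a variable $\al$ for a variable $\be$ is just a renaming and cannot create new redexes or destroy strong normalization. This contradicts the assumption that the reduction from $c[\be := \al]$ is infinite. The symmetric statement for $r$-terms, with $\langle x, t\rangle$, is handled identically using $\tr_{\tilde\mu}$ in place of $\tr_\mu$.

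The point requiring the most care is the claim that, in an infinite reduction from $\langle t, \al\rangle$, the first step that is not internal to $t$ must be a $\mu$-step of the form above — i.e. ruling out that some other root redex appears. This is a finite case check over the six reduction rules: one must verify that as long as the right component is the variable $\al$ (which remains a variable under any reduction of the left component), none of $\tr_\lambda, \tr_{\overline\lambda}, \tr_{\tilde\mu}, \tr_{s_l}, \tr_{s_r}$ can fire at the root, and $\tr_\mu$ fires only after $t$ has reduced to some $\mu\be\, c$. I also need the elementary fact that $c[\be:=\al] \in SN$ whenever $c \in SN$; since this substitution only renames a free variable it induces a bijection on reduction sequences, so $\eta(c[\be:=\al]) = \eta(c)$, and in particular strong normalization is preserved. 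Both of these are routine, so no real obstacle is expected; the lemma is genuinely easy and serves only to bootstrap the harder normalization arguments that follow.
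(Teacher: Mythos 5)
Your proof is correct and follows essentially the same route as the paper: the paper argues by induction on $\eta(t)$ that any reduct of $\langle t,\al\rangle$ outside $SN$ is either internal to $t$ or a root $\mu$-step $\langle\mu\be\,c,\al\rangle\tr_{\mu}c[\be:=\al]$, and then uses (implicitly) that renaming $\be$ to $\al$ preserves $SN$, which is exactly your case analysis on the first non-internal step of an infinite reduction. The only cosmetic difference is that you phrase the well-founded argument via ``first root step of an infinite reduction'' rather than induction on $\eta(t)$, which changes nothing of substance.
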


\begin{proof}
By induction on $\eta(t)$. Since $\langle t,\al\rangle \not\in
SN$, $\langle t,\al\rangle \tr u$ for some $u$ such that $u \not
\in SN$.  If $u = \langle t',\al \rangle$ where $t \tr t'$ we
conclude by the induction hypothesis since $\eta(t')< \eta(t)$. If
$t = \m \be \, c$ and $u=c[\be := \al] \not\in SN$, then $c \not
\in SN$ and $t \not \in SN$. Contradiction.
\end{proof}

\subsection{$\tr_{\;s}$ can be postponed}\label{1s}

\begin{definition}
\begin{enumerate}
\item Let $\tr_{\;\mu_0}, \tr_{\;\tilde{\mu}_0}$ be defined as follows:
\begin{itemize}
\item $\langle \mu \a \, c , t_r \rangle \tr_{\;\mu_0} c[\a := t_r]$
  $\;\;\;\;\;\;$ if $\a$ occurs at most once in $c$
\item  $\langle  t_l , \mu x \, c \rangle \tr_{\;\tilde{\mu}_0} c[x :=
  t_l]$
  $\;\;\;\;\;\;\;$ if $x$ occurs at most once in $c$
\end{itemize}
\item Let $\tr_{\;l_0} =  \tr_{\;\mu_0} \cup \tr_{\;\tilde{\mu}_0}$.
\end{enumerate}
\end{definition}

\begin{lemma}\label{1sl}
If $u \tr_{\;s} v \tr_{\;l} w$, then there is $t$ such that $u
\tr_{\;l} t \tr_{\;s}^* w$ or $u \tr_{\;l_0} t \tr_{\;l} w$.
\end{lemma}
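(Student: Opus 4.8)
The statement is a commutation lemma: a simplification step $\tr_{\;s}$ followed by a logical step $\tr_{\;l}$ can be rearranged so that a logical step comes first. I would argue by case analysis on the relative positions of the $s$-redex $R$ (contracted in $u\tr_{\;s}v$) and the $l$-redex $R'$ (contracted in $v\tr_{\;l}w$) inside $v$. The easy case is when $R$ and $R'$ are disjoint, or when $R'$ lies strictly inside one of the term-components of $R$ (recall an $s_l$-redex is $\m\a\,\langle t_l,\a\rangle$ with $\a\notin Fv(t_l)$, similarly for $s_r$): then the two contractions commute on the nose and we land in the first alternative $u\tr_{\;l}t\tr_{\;s}^*w$, with in fact a single $\tr_{\;s}$ step, or zero if $R'$ destroyed the body of $R$ — but since $R'$ is \emph{inside} $t_l$ it cannot touch the outer $\m\a$ or the trailing $\a$, so one $s$-step survives.

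The interesting cases are those where the contraction of $R$ \emph{creates} the redex $R'$, i.e.\ $R'$ is not a residual of a redex already present in $u$. Since $\tr_{\;s}$ only erases a $\m$-binder (turning $\m\a\,\langle t_l,\a\rangle$ into $t_l$), the newly created $l$-redex must arise because this erasure exposes the head of $t_l$ to a context that was previously blocked by the $\m\a$. Concretely, $u$ contains a subterm of the shape $\langle \m\a\,\langle t_l,\a\rangle,\, t_r\rangle$ (or the symmetric $\langle t_l',\ \m x\,\langle x,t_r\rangle\rangle$), which is itself a $\m$-redex (resp.\ $\tilde\mu$-redex) in $u$! So $u\tr_{\;\mu}\langle t_l,\a\rangle[\a:=t_r]=\langle t_l,t_r\rangle$ — and since $\a$ occurs at most once in $\langle t_l,\a\rangle$ (namely, not in $t_l$), this is even a $\tr_{\;\mu_0}$ step. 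After this step, whatever $l$-redex $R'$ was about to be fired in $v=\langle\m\a\langle t_l,\a\rangle$'s reduct, i.e.\ in $t_l$ against $t_r$, is available directly in $\langle t_l,t_r\rangle$; that gives the second alternative $u\tr_{\;l_0}t\tr_{\;l}w$. One has to check the remaining sub-case where the $s$-redex sits not at a component of a $c$-term but as the left/right immediate subterm of an application $t_r.t_l$ or inside a $\l$ or $\m$ binder: there $\m\a\langle t_l,\a\rangle$ cannot be the active part of any $l$-redex after one step either, because $l$-redexes are contractions $\langle\dots,\dots\rangle$ and removing the $\m\a$ there only changes a non-head position, so such a step does not create an $l$-redex at all and we are back in the commuting case.

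The main obstacle, and the point that needs care, is bookkeeping the \emph{substitution} in the created $\mu_0$/$\tilde\mu_0$ step versus the substitution in the original $l$-step, and making sure that the $l$-redex $R'$ really does survive into $t$ with the \emph{same} label ($\l$, $\overline\l$, $\mu$ or $\tilde\mu$); in particular one must rule out that firing $R$ merges two constructors into an $s$-redex rather than an $l$-redex (it cannot, since $s$-redexes are headed by $\m$ and $\tr_{\;s}$ removes a $\m$, it does not produce one at a fresh place). I would organize the write-up as: (1) fix notation for the $s$-redex occurrence and split on whether it is a $c$-component or not; (2) dispatch the disjoint/nested-inside cases to the first alternative; (3) in the remaining case identify the ambient $\m$- or $\tilde\mu$-redex in $u$, perform the $\tr_{\;l_0}$ step, and verify $R'$ is inherited, giving the second alternative. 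A short auxiliary observation that $\a$ (resp.\ $x$) occurs exactly once — at the displayed position — in the body of an $s$-redex, hence the triggered step is genuinely $\tr_{\;l_0}$, completes the argument.
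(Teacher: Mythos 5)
Your proposal is correct and is essentially the fleshed-out version of the paper's own (one-line) proof ``by induction on $u$'': the induction bottoms out in exactly your case analysis, with the crucial observation that when the $s$-redex is an immediate component of a pair, $u$ itself has a $\mu_0$- or $\tilde{\mu}_0$-redex there whose contraction (the bound variable occurring exactly once) reproduces $v$ on the nose, giving $u \tr_{\;l_0} v \tr_{\;l} w$, while in all other positions no $l$-redex can be created and the steps commute. Only note that in the commuting case the surviving $s$-redex may be duplicated or erased when the $l$-step is a $\mu$- or $\tilde{\mu}$-substitution with the $s$-redex inside the substituted term, which is precisely why the first alternative needs $\tr_{\;s}^*$ rather than the single step you claim in your ``easy'' subcases; this is harmless for the statement as given.
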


\begin{proof}
By induction on $u$.
\end{proof}

\begin{lemma}\label{1slO}
If $u \tr_{\;s} v \tr_{\;l_0} w$, then  either $u \tr_{\;l_0} w$
or, for some $t$, $u
 \tr_{\;l_0} t \tr_{\;s} w$ or  $u \tr_{\;l_0} t \tr_{\;l_0} w$.
\end{lemma}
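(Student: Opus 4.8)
The plan is to proceed by induction on $u$, exactly as in Lemma \ref{1sl}, but now being careful to track which $l_0$-rule is used so that we can keep the result inside the $l_0$-fragment. First I would set up the case analysis according to the relative positions of the $s$-redex contracted in $u \tr_{\;s} v$ and the $l_0$-redex contracted in $v \tr_{\;l_0} w$. If the two redexes are disjoint or one is nested strictly inside an argument of the other without interference, the two contractions commute and we land in the case $u \tr_{\;l_0} t \tr_{\;s} w$ (or we may even get $u \tr_{\;l_0} w$ directly when the $s$-step turns out to have been redundant). The interesting cases are those where the $s$-step creates the $l_0$-redex or modifies the count of occurrences of the bound variable that governs applicability of $\tr_{\;\mu_0}$ or $\tr_{\;\tilde{\mu}_0}$.

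Concretely, the critical overlaps are: $u = \langle \mu\a\, c', t_r\rangle$ with $c' = \mu\b\,\langle t_l, \b\rangle$ and $\b\notin Fv(t_l)$, so that $v = \langle \mu\a\, t_l, t_r\rangle$ after the $s_l$-step, and then $v \tr_{\;\mu_0} t_l[\a:=t_r]$; and symmetrically for $s_r$ and $\tilde\mu_0$. Another family is where $u$ itself has the shape $\langle \mu\a\, c, t_r\rangle$ with the $s$-redex lying inside $c$: the $s$-step can only decrease the number of occurrences of $\a$ in $c$ (an $s$-step never duplicates), so if $\a$ occurs at most once in $v$'s body it occurred at most once in $u$'s body, hence $u \tr_{\;\mu_0} c[\a:=t_r]$ is legal; one then checks that $c[\a:=t_r] \tr_{\;s} c'[\a:=t_r] = w$ (the $s$-redex survives the substitution since substituting an $r$-term for $\a$ cannot destroy an $s_l$- or $s_r$-redex, and $\a$ is not the variable bound by that redex), giving $u \tr_{\;l_0} t \tr_{\;s} w$. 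The genuinely new phenomenon compared with Lemma \ref{1sl} is that the $s$-step on $c$ could be the one that makes a second occurrence of $\a$ collapse, but since $s$-steps only erase, not duplicate, the occurrence count of $\a$ is monotone nonincreasing along $\tr_{\;s}$, so the side condition for $\tr_{\;\mu_0}$ is preserved in the direction we need.

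The main obstacle I expect is the bookkeeping in the overlap cases where the $s$-contraction both substitutes a term and thereby changes the occurrence count relevant to the $l_0$ side condition — one must verify that in every such case either the resulting term is already $w$ (absorbing the $s$-step), or exactly one further $s$-step or one further $l_0$-step remains, never more. Here the restriction to $l_0$ (at most one occurrence of the bound variable) is what keeps things finite: because the substituted variable occurs at most once, contracting the $l_0$-redex cannot replicate a pending $s$-redex, so at most a single residual $s$- or $l_0$-step is left, which is precisely the shape asserted. I would organize the write-up as: the easy disjoint/nested cases dispatched in one line by the induction hypothesis, then the two symmetric $s_l/\mu_0$ and $s_r/\tilde\mu_0$ critical-pair computations written out explicitly, checking in each that one of the three stated outcomes holds.
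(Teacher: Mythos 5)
Your plan is correct and is essentially the paper's own proof, which is simply ``by induction on $u$'' with exactly the case analysis you describe (commuting the disjoint/nested cases, and resolving the critical pairs where the $s$-step erases the redex, exposes a $\mu$ at the root of a component, or sits inside the substituted term, using that $l_0$-substitution cannot duplicate and that an $s$-step cannot increase the number of occurrences of the bound variable). Only a cosmetic remark: in your first critical pair the displayed $u=\langle \mu\a\,c',t_r\rangle$ with $c'=\mu\b\,\langle t_l,\b\rangle$ is ill-typed ($c'$ must be a $c$-term); the intended configuration is $u=\langle \mu\b\,\langle t_l,\b\rangle , t_r\rangle$ with $t_l$ a $\mu$-abstraction, which is resolved by the $u \tr_{\;l_0} t \tr_{\;l_0} w$ branch, and in fact $s$-steps preserve free-variable occurrence counts exactly, which only strengthens your monotonicity argument.
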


\begin{proof}
By induction on $u$.
\end{proof}

\begin{lemma}\label{1snlO}
If $u \tr_{\;s}^* v \tr_{\;l_0} w$ then, for some $t$,  $u
 \tr_{\;l_0}^+ t \tr_{\;s}^* w$ and $lg(u \tr_{\;s}^* v \tr_{\;l_0} w) \leq lg(u
 \tr_{\;l_0}^+ t \tr_{\;s}^* w)$.
\end{lemma}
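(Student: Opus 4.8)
The statement to prove, Lemma~\ref{1snlO}, says that a reduction sequence of the form $u \tr_{\;s}^* v \tr_{\;l_0} w$ can be rearranged into $u \tr_{\;l_0}^+ t \tr_{\;s}^* w$ with the length not decreasing.

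The plan is to proceed by induction on the length of the $\tr_{\;s}^*$ prefix, using Lemma~\ref{1slO} as the one-step commutation engine. The base case, where the $\tr_{\;s}^*$ prefix has length $0$, is trivial: we already have $u = v \tr_{\;l_0} w$, so take $t := w$ and the single $\tr_{\;l_0}$ step on the right-hand side matches (length $1 = 1$, and $\tr_{\;s}^*$ of length $0$ closes it off). For the inductive step, write the prefix as $u \tr_{\;s} u' \tr_{\;s}^* v \tr_{\;l_0} w$. By the induction hypothesis applied to $u' \tr_{\;s}^* v \tr_{\;l_0} w$, there is $t'$ with $u' \tr_{\;l_0}^+ t' \tr_{\;s}^* w$, and the length does not decrease. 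Now I need to push the leading $u \tr_{\;s} u'$ step past the block $u' \tr_{\;l_0}^+ t'$.

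The core of the argument is therefore: given $u \tr_{\;s} u' \tr_{\;l_0}^+ t'$, produce $u \tr_{\;l_0}^+ t'' \tr_{\;s}^* t'$ without decreasing length, which I would handle by an inner induction on the length of $u' \tr_{\;l_0}^+ t'$, each step of which invokes Lemma~\ref{1slO}. Lemma~\ref{1slO} gives three outcomes for $u \tr_{\;s} u'_1 \tr_{\;l_0} u'_2$: either $u \tr_{\;l_0} u'_2$ directly (length goes from $2$ to $1$ --- so here I must be careful, but note the statement only asks $lg(\text{old}) \le lg(\text{new})$, i.e. the \emph{new} rearranged sequence is at least as long; I should double-check the direction of the inequality in the lemma statement against what Lemma~\ref{1slO} provides, since a shortening step would violate it --- more on this below), or $u \tr_{\;l_0} t \tr_{\;s} u'_2$, or $u \tr_{\;l_0} t \tr_{\;l_0} u'_2$. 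In the second and third cases the $\tr_{\;s}$ either gets absorbed or a new $\tr_{\;l_0}$ step appears that we continue to commute rightward along the remaining $\tr_{\;l_0}$ block. Iterating, all $\tr_{\;s}$ steps migrate to the far right, yielding $u \tr_{\;l_0}^+ t'' \tr_{\;s}^* t' \tr_{\;s}^* w$, i.e. $u \tr_{\;l_0}^+ t'' \tr_{\;s}^* w$ after merging the two $\tr_{\;s}^*$ segments, which is the desired conclusion.

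The main obstacle is bookkeeping the length inequality, and in particular reconciling it with the first case of Lemma~\ref{1slO}, where $u \tr_{\;s} v \tr_{\;l_0} w$ collapses to a single step $u \tr_{\;l_0} w$. This case strictly shortens the sequence, so for the inequality $lg(u \tr_{\;s}^* v \tr_{\;l_0} w) \leq lg(u \tr_{\;l_0}^+ t \tr_{\;s}^* w)$ to hold it must be that each use of this collapsing case is compensated: indeed when we collapse, we simultaneously consume one $\tr_{\;s}$ step and one $\tr_{\;l_0}$ step and replace them by one $\tr_{\;l_0}$ step, dropping the count by exactly one --- but we have started from a sequence containing (at least one) $\tr_{\;s}$ step that, in the best case, we also want to have preserved on the right. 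The cleanest way to manage this is to track the total number of steps as a single integer and verify that in every branch of Lemma~\ref{1slO} the rearranged local sequence has length $\geq$ the original local sequence's length minus the number of $\tr_{\;s}$ steps genuinely eliminated, then observe that the global $\tr_{\;s}^*$ tail plus the $\tr_{\;l_0}^+$ head always account for at least the original total. In short, I would state an inductive invariant slightly stronger than the bare inequality --- something like ``$lg(u \tr_{\;l_0}^+ t) + lg(t \tr_{\;s}^* w) \geq lg(u \tr_{\;s}^* v) + 1$'' --- carry it through both inductions, and then read off Lemma~\ref{1snlO} as an immediate consequence. Everything else is a routine double induction with case analysis dictated entirely by Lemma~\ref{1slO}.
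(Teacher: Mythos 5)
Your commutation argument itself is sound and is essentially the paper's proof: the paper runs a single induction on $lg(u \tr_{\;s}^* v \tr_{\;l_0} w)$ and appeals to Lemma~\ref{1slO}, while you organize the same content as a double induction (outer on the $\tr_{\;s}$-prefix, inner on the $\tr_{\;l_0}$-block); that difference is harmless. The genuine gap is the length bookkeeping, exactly at the point you yourself flagged and then waved away. Your ``inductive invariant'' $lg(u \tr_{\;l_0}^+ t)+lg(t \tr_{\;s}^* w)\ge lg(u \tr_{\;s}^* v)+1$ is not stronger than the stated inequality --- it \emph{is} the stated inequality --- and no compensation argument can establish it, because the absorbing case of Lemma~\ref{1slO} is in general not compensated. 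Concretely (in the full calculus, which is the setting of this subsection), let $u=\langle\, \mu\alpha\,\langle x,\beta\rangle\, ,\ \lambda\gamma\,\mu y\,\langle y,\delta\rangle\,\rangle$. Reducing the $s_r$-redex $\mu y\,\langle y,\delta\rangle$ gives $v=\langle\, \mu\alpha\,\langle x,\beta\rangle\, ,\ \lambda\gamma\,\delta\,\rangle$, and since $\alpha$ does not occur in $\langle x,\beta\rangle$ we have $v \tr_{\;\mu_0} \langle x,\beta\rangle=w$: a sequence $u \tr_{\;s} v \tr_{\;l_0} w$ of length $2$. But the only $l_0$-redex of $u$ is the one at the root (the $s$-redex sits under the $\lambda\gamma$, so it is not mirrored by any $\tilde{\mu}_0$-step), giving $u \tr_{\;\mu_0} w$, and $w$ admits no further $l_0$- or $s$-step; hence the only reduction of the form $u \tr_{\;l_0}^+ t \tr_{\;s}^* w$ is $u \tr_{\;l_0} w$, of length $1<2$.

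So the inequality cannot be proved as printed; it has to be read in the opposite direction, $lg(u \tr_{\;l_0}^+ t \tr_{\;s}^* w)\le lg(u \tr_{\;s}^* v \tr_{\;l_0} w)$ (evidently a typo in the statement). That direction is what your construction delivers for free, since each of the three alternatives of Lemma~\ref{1slO} replaces two steps by at most two (the absorbing case by one), so the total number of steps never increases along the rearrangement. It is also the direction that the induction in Lemma~\ref{1snl} actually needs: combined with $lg(u \tr_{\;l_0}^+ t)\ge 1$ it yields $lg(t \tr_{\;s}^* w)\le lg(u \tr_{\;s}^* v)$, which is what makes the measure decrease there. With that correction, drop the ``compensation'' discussion, keep your double induction (or the paper's single induction on the total length), and the proof goes through.
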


\begin{proof}
By induction on $lg(u \tr_{\;s}^* v \tr_{\;l_0} w)$. Use lemma
\ref{1slO}.
\end{proof}

\begin{lemma}\label{1snl}
If $u \tr_{\;s}^* v \tr_{\;l} w$ then, for some  $t$,  $u
 \tr_{\;l}^+ t \tr_{\;s}^* w$ .
\end{lemma}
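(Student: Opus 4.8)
The plan is to prove the statement by induction on the total length $lg(u \tr_{\;s}^* v \tr_{\;l} w)$ of the given reduction, peeling off the \emph{last} simplification step and invoking Lemma \ref{1sl}. If the reduction $u \tr_{\;s}^* v$ is empty there is nothing to do (take $t=w$), so assume $u \tr_{\;s}^* v' \tr_{\;s} v \tr_{\;l} w$. Applying Lemma \ref{1sl} to the final part $v' \tr_{\;s} v \tr_{\;l} w$ produces two cases.

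In the first case, $v' \tr_{\;l} t_1 \tr_{\;s}^* w$ for some $t_1$. Then $u \tr_{\;s}^* v' \tr_{\;l} t_1$ is strictly shorter than the original reduction (it has one fewer $\tr_{\;s}$ step, the tail $t_1 \tr_{\;s}^* w$ being set aside), so the induction hypothesis gives $t$ with $u \tr_{\;l}^+ t \tr_{\;s}^* t_1$; composing with $t_1 \tr_{\;s}^* w$ yields $u \tr_{\;l}^+ t \tr_{\;s}^* w$. In the second case, $v' \tr_{\;l_0} t_1 \tr_{\;l} w$. Here I first note $u \tr_{\;s}^* v' \tr_{\;l_0} t_1$ and apply Lemma \ref{1snlO}: there is $t_2$ with $u \tr_{\;l_0}^+ t_2 \tr_{\;s}^* t_1$, and, crucially, the length of the new reduction $u \tr_{\;l_0}^+ t_2 \tr_{\;s}^* t_1$ is controlled (via the length estimate of Lemma \ref{1snlO}) by that of $u \tr_{\;s}^* v' \tr_{\;l_0} t_1$. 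Since $\tr_{\;l_0} \subseteq \tr_{\;l}$ we already have $u \tr_{\;l}^+ t_2$, so it remains to push the block $t_2 \tr_{\;s}^* t_1$ past the last logical step, i.e.\ to treat $t_2 \tr_{\;s}^* t_1 \tr_{\;l} w$. Because the $\tr_{\;l_0}^+$-prefix absorbs at least one step, this reduction is strictly shorter than the original, so the induction hypothesis applies and gives $t_3$ with $t_2 \tr_{\;l}^+ t_3 \tr_{\;s}^* w$; concatenating, $u \tr_{\;l}^+ t_2 \tr_{\;l}^+ t_3 \tr_{\;s}^* w$, hence $u \tr_{\;l}^+ t_3 \tr_{\;s}^* w$, as required.

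The delicate point is entirely in the second case: one must make sure that after replacing the final $\tr_{\;s}\tr_{\;l}$ by $\tr_{\;l_0}\tr_{\;l}$ and then moving the remaining simplification steps to the right through Lemma \ref{1snlO}, the reduction fed back into the induction hypothesis is genuinely shorter. This is exactly why Lemma \ref{1sl} is phrased with $\tr_{\;l_0}$ rather than arbitrary $\tr_{\;l}$ (a $\tr_{\;l_0}$ redex does not duplicate its argument) and why Lemma \ref{1snlO} carries a length estimate — together they make this induction well-founded. The rest is routine concatenation of reductions and the inclusion $\tr_{\;l_0}\subseteq\tr_{\;l}$.
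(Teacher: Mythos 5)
Your proof is correct and is precisely the argument the paper compresses into one line: induction on $lg(u \tr_{\;s}^* v \tr_{\;l} w)$, applying Lemma \ref{1sl} to the last two steps and, in the second case, Lemma \ref{1snlO} to restore a strictly smaller configuration before re-invoking the induction hypothesis. One remark: your argument needs the length estimate of Lemma \ref{1snlO} in the direction $lg(u \tr_{\;l_0}^+ t \tr_{\;s}^* w) \leq lg(u \tr_{\;s}^* v \tr_{\;l_0} w)$ (so that, the $\tr_{\;l_0}^+$-prefix being nonempty, the remaining $s$-block is strictly shorter than before), and this is indeed what the proof of that lemma via Lemma \ref{1slO} yields; the inequality as printed in the statement of Lemma \ref{1snlO} has the two sides the other way round, so the reading you use --- not the literal printed one --- is the one that makes the induction well-founded.
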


\begin{proof}
By induction on $lg(u \tr_{\;s}^* v \tr_{\;l} w)$. Use lemmas
\ref{1sl} and \ref{1snlO}.
\end{proof}

\begin{corollary}\label{1snlm}
$\tr_{\;s}$ can be postponed.
\end{corollary}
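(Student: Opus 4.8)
===  PROOF PROPOSAL  ===

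\noindent\textbf{Plan.} The goal is to show that the simplification reduction $\tr_{\;s}$ can be postponed, i.e.\ that if $u \tr^* w$ using arbitrary steps ($\tr_{\;s}$ and $\tr_{\;l}$ mixed), then there is some $t$ with $u \tr_{\;l}^* t \tr_{\;s}^* w$. The whole sequence of lemmas \ref{1sl}--\ref{1snl} has been assembled precisely so that this corollary falls out by a short outer induction. First I would induct on the number of $\tr_{\;l}$-steps occurring in the reduction $u \tr^* w$. If there are none, then $u \tr_{\;s}^* w$ and we are done with $t = u$.

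\noindent For the inductive step, consider the reduction $u \tr^* w$. If it begins with an $\tr_{\;l}$-step, say $u \tr_{\;l} u'$ with $u' \tr^* w$, then the remainder $u' \tr^* w$ has one fewer $\tr_{\;l}$-step, so by the induction hypothesis $u' \tr_{\;l}^* t \tr_{\;s}^* w$, and prefixing the first step gives $u \tr_{\;l}^+ t \tr_{\;s}^* w$. Otherwise the reduction begins with a (possibly empty, but here nonempty) block of $\tr_{\;s}$-steps followed by an $\tr_{\;l}$-step: $u \tr_{\;s}^* v \tr_{\;l} v' \tr^* w$. Here I apply Lemma \ref{1snl} to the prefix $u \tr_{\;s}^* v \tr_{\;l} v'$ to obtain $t'$ with $u \tr_{\;l}^+ t' \tr_{\;s}^* v'$. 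Now $t' \tr_{\;s}^* v' \tr^* w$; but the tail $v' \tr^* w$ still has one fewer $\tr_{\;l}$-step than the original, so concatenating $t' \tr_{\;s}^* v'$ with it and applying the induction hypothesis to $t' \tr^* w$ yields $t' \tr_{\;l}^* t \tr_{\;s}^* w$, whence $u \tr_{\;l}^+ t \tr_{\;s}^* w$. In all cases $u \tr_{\;l}^* t \tr_{\;s}^* w$, which is the claim.

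\noindent\textbf{Main obstacle.} The genuine difficulty has already been dispatched in Lemma \ref{1sl} and its consequences: the subtle point is that pushing a single $\tr_{\;s}$-step past a single $\tr_{\;l}$-step does not simply swap them --- one may be forced to replace the $\tr_{\;l}$-step by an $\tr_{\;l_0}$-step (the ``linear'' variants $\tr_{\;\mu_0}, \tr_{\;\tilde{\mu}_0}$), because an $s_l$ or $s_r$ simplification can turn a general $\mu$/$\tilde\mu$ redex into one where the bound variable occurs at most once. This is why the auxiliary reduction $\tr_{\;l_0}$ and the chain of Lemmas \ref{1slO}--\ref{1snl} are needed to show that iterating the local swap still terminates and still produces an $\tr_{\;l}^+$-prefix. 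Once Lemma \ref{1snl} is in hand, the corollary itself is just the bookkeeping induction above, and the only care required is to phrase the outer induction on the number of logical steps so that each case strictly decreases that measure.
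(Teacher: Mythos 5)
Your proof is correct and follows the paper's route: the paper's own proof of the corollary is simply ``by Lemma \ref{1snl}'', leaving implicit exactly the outer induction on the number of $\tr_{\;l}$-steps that you spell out. Your bookkeeping induction is the intended (and valid) way to derive the postponement statement from Lemma \ref{1snl}, so there is nothing to add.
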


\begin{proof}
By  lemma \ref{1snl}.
\end{proof}

\begin{lemma}\label{1ssn}
The $s$-reduction is strongly normalizing.
\end{lemma}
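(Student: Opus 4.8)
The plan is to show that any $s$-reduction sequence must terminate by finding a quantity that strictly decreases with each $s$-step. The $s$-rules are $\mu \a\, \langle t_l,\a\rangle \tr_{s_l} t_l$ when $\a \notin Fv(t_l)$, and $\mu x\, \langle x,t_r\rangle \tr_{s_r} t_r$ when $x \notin Fv(t_r)$. The key observation is that each such step strictly removes symbols: it deletes the $\mu$-binder, the pairing $\langle -,-\rangle$, and the occurrence of the bound variable, while the subterm $t_l$ (resp. $t_r$) is left untouched and, by the side condition, nothing is duplicated. So the natural candidate is simply $cxty(u)$, the number of symbols occurring in $u$.

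First I would check that $s$-reduction is a congruence (it is, by definition of the reduction rules, which are closed under all term-forming contexts), so it suffices to verify that the root rules strictly decrease $cxty$ and that $cxty$ is compatible with contexts in the sense that if $cxty(u') < cxty(u)$ then replacing an occurrence of $u$ inside a larger term $v$ by $u'$ yields a term of strictly smaller complexity. The latter is immediate since $cxty$ is just a count of symbol occurrences and the rest of $v$ is unchanged. For the root rules: in $\mu \a\, \langle t_l,\a\rangle \tr_{s_l} t_l$, the redex has $cxty = cxty(t_l) + 3$ (one for $\mu$, one for the variable binder occurrence accounted in $\mu\a$, one for $\langle\;,\;\rangle$, one for the occurrence of $\a$ — I would just count carefully: $\mu$, $\a$ in binder position, the pair constructor, and the argument $\a$, so $cxty(t_l)+$ a positive constant), which is strictly greater than $cxty(t_l)$. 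The case of $s_r$ is symmetric.

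Therefore every $s$-reduction step strictly decreases the non-negative integer $cxty(u)$, so there can be no infinite $s$-reduction sequence; i.e. the $s$-reduction is strongly normalizing. I do not expect any real obstacle here — the only mild subtlety is being precise about the symbol count and noting that the side conditions $\a \notin Fv(t_l)$ and $x \notin Fv(t_r)$ guarantee no substitution/duplication occurs (indeed these rules involve no substitution at all, merely erasure), so there is genuinely nothing to grind through beyond a one-line complexity argument.

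\begin{proof}
The $s$-reduction does not duplicate or substitute anything: each rule simply erases a $\mu$-binder, a pairing and one variable occurrence. Hence if $u \tr_{\;s} v$ then $cxty(v) < cxty(u)$. Since $cxty$ takes values in $\mathbb{N}$, there is no infinite $s$-reduction.
\end{proof}
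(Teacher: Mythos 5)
Your proof is correct and matches the paper's argument exactly: each $s$-step only erases a $\mu$-binder, a pairing and a variable occurrence, so $cxty$ strictly decreases, which forbids infinite $s$-reductions. The paper's proof is the same one-line observation that $u \tr_{\;s} v$ implies $cxty(u) > cxty(v)$.
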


\begin{proof}
If $u \tr_{\;s} v$, then $cxty(u) > cxty(v)$.
\end{proof}

\begin{theorem}\label{1post}
\begin{enumerate}
\item If $t$ is strongly normalizing for the $l$-reduction, then it is also  strongly normalizing
for the
  $ls$-reduction .
\item If $t$ is strongly normalizing for the $\mu\tilde{\mu}$-reduction, then it is also strongly normalizing for
the
  $\mu\tilde{\mu}s$-reduction.
\end{enumerate}
\end{theorem}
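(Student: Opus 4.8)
The statement to prove is Theorem~\ref{1post}: if $t$ is strongly normalizing for the $l$-reduction (resp. the $\mu\tilde\mu$-reduction), then it is also strongly normalizing for the $ls$-reduction (resp. the $\mu\tilde\mu s$-reduction). Both parts have the same shape, so I will describe the plan for part~1; part~2 is identical after replacing $\tr_{\;l}$ by $\tr_{\;\mu\tilde\mu}$ throughout and checking that the postponement machinery (Lemmas~\ref{1sl}--\ref{1snl}) was stated in a form that applies equally to the $\mu\tilde\mu$-fragment, which it is.

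The key tool is Corollary~\ref{1snlm} together with Lemma~\ref{1ssn}: the simplification steps can be pushed to the end of any finite mixed reduction, and $\tr_{\;s}$ alone is strongly normalizing. So the plan is the standard ``postponement + SN of the postponed relation'' argument. Suppose, for contradiction, that $t$ is $l$-SN but admits an infinite $ls$-reduction
$$
t = t_0 \; \tr_{\;ls} \; t_1 \; \tr_{\;ls} \; t_2 \; \tr_{\;ls} \; \cdots .
$$
Since $\tr_{\;s}$ is SN (Lemma~\ref{1ssn}), this infinite sequence cannot be eventually all $s$-steps; hence it contains infinitely many $l$-steps. Consider any finite prefix $t_0 \tr_{\;ls}^* t_n$ containing at least one $l$-step. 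By iterating Lemma~\ref{1snl} (or directly by Corollary~\ref{1snlm}), there is a term $t'_n$ with $t_0 \tr_{\;l}^+ t'_n \tr_{\;s}^* t_n$; in particular the number of $l$-steps in the reorganized reduction is at least the number of $l$-steps in the original prefix. Letting $n$ range over an increasing sequence of cut points chosen so that the number of $l$-steps in $t_0 \tr_{\;ls}^* t_n$ tends to infinity, we obtain $l$-reductions from $t_0$ of unbounded length, hence (since $\tr_{\;l}$ is finitely branching on each term, or more simply by König-style reasoning on the tree of $l$-reducts, or just: unbounded-length reductions from an SN term is already a contradiction via $\eta$) an infinite $l$-reduction from $t_0 = t$, contradicting that $t$ is $l$-SN.

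To make the last step clean I would phrase it without any branching argument: if $t$ is $l$-SN then $\eta_l(t)$ (the sup of lengths of $l$-reductions from $t$) is a well-defined natural number, and we have just exhibited $l$-reductions from $t$ of length $> \eta_l(t)$, which is the contradiction. Therefore no infinite $ls$-reduction from $t$ exists, i.e. $t$ is $ls$-SN. For part~2 one repeats this verbatim with $\tr_{\;\mu\tilde\mu}$ in place of $\tr_{\;l}$; the only thing to verify is that Lemmas~\ref{1sl}, \ref{1slO}, \ref{1snlO}, \ref{1snl} go through when the logical rules are restricted to $\mu$ and $\tilde\mu$ — and they do, since those lemmas were proved by induction on $u$ and case analysis on the redex, and discarding the $\l,\overline\l$ cases only removes cases.

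The main obstacle is really just bookkeeping: making sure the ``number of $l$-steps does not decrease under postponement'' is actually guaranteed. Lemma~\ref{1snl} as stated only asserts the existence of $t$ with $u \tr_{\;l}^+ t \tr_{\;s}^* w$, not a length inequality; so strictly I would first strengthen it (or rather observe, by tracking the bound through Lemmas~\ref{1sl}--\ref{1snl} exactly as Lemma~\ref{1snlO} already tracks $lg$) to the statement that the number of $l$-steps is preserved or increased. With that refinement in hand the proof is the short contradiction argument above; without it one has to be slightly more careful, noting that at worst Lemma~\ref{1sl} can replace a single $l$-step by an $l_0$-step followed by an $l$-step, which only increases the $l$-count, and the $s$-steps can never be converted into $l$-steps, so the $l$-count along the reorganized reduction is never smaller than along the original.
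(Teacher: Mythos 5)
Your proof is correct and follows essentially the same route as the paper, which simply invokes the postponement result (Corollary \ref{1snlm}, via Lemma \ref{1snl}) together with the strong normalization of $\tr_{\;s}$ (Lemma \ref{1ssn}), and notes that the postponement lemmas restrict to the $\mu\tilde{\mu}$-fragment. Your extra care about the number of $l$-steps not decreasing under rearrangement (and the observation that an $l_0$-step is itself an $l$-step) just makes explicit the bookkeeping the paper leaves implicit.
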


\begin{proof}
Use lemmas \ref{1ssn} and \ref{1snlm}. It is easy to check that
the lemma \ref{1snlm}  remains true if we consider only the
reduction rules $\mu$ and $\tilde{\mu}$.
\end{proof}

\subsection{The  $\mu\tilde{\mu}$-reduction is strongly
normalizing}\label{1mm'}

In this section we consider only the $\mu\tilde{\mu}$-reduction
and we restrict the set of terms to the following grammar.

\begin{center}
\begin{tabular}{cccccc}
$c$   &::= & $\langle t_l , t_r \rangle$ & \,  &\, \\
$t_l$ &::= & $x$  &$\mid$ &$\mu \a \, c$  \\
$t_r$ &::= & $\a$  &$\mid$ &$\mu  x \, c$
\end{tabular}
\end{center}

It is easy to check that, to prove the strong normalization of the
full calculus with the $\mu\tilde{\mu}$-reduction, it is enough to
prove the strong normalization of this restricted calculus.

Remember that we are, here,  in the un-typed caculus and thus our
proof does not use types but the strong normalization of this
 calculus actually follows from the result of the next section: it is easy to check that,
in this restricted calculus, every term is typable by any type, in
the context where the free variables are given this type. We have
kept this section since the main ideas of the proof of the general
case already appear here and this is done in a simpler situation.

The main point of the proof is the following.
 It is easy to show  that if $t
\in SN$ but $t[x:=t_l]\not\in SN$, there is some $\langle x, t_r
\rangle \prec t$ such that $t_r[x:=t_l] \in SN$ and  $\langle t_l,
t_r[x:=t_l]\rangle \not \in SN$.
 But this is not enough and we need a
stronger (and more difficult) version of this: lemma \ref{1crux}
ensures that, if $t[\sig] \in SN$ but $t[\sig][x:=t_l] \not\in SN$
then the real cause of non $SN$ is, in some  sense, $[x:=t_l]$.

 Having this result, we show, essentially by
induction on $\eta c(t_l) + \eta c(t_r)$, that if $t_l,t_r \in SN$
then $\langle t_l, t_r\rangle \in SN$. The point is that there is,
in fact, no deep interactions between $t_l$ and $t_r$ i.e. in a
reduct of $\langle t_l, t_r\rangle$ we always know what is coming
from $t_l$ and what is coming from $t_r$. The final result comes
then from a trivial induction on the terms.

\begin{definition}
\begin{itemize}
  \item We denote by $\Sigma_l$ (resp. $\Sigma_r$) the set of simultaneous substitutions of the form
$[x_1:=t_1,...,x_n:=t_n]$ (resp. $[\al_1:=t_1,...,\al_n:=t_n]$)
  where $t_1,...,t_n$ are proper $l$-terms ($r$-terms).
\item For $s \in \{l,r\}$, if
  $\sig=[\xi_1:=t_1,...,\xi_n:=t_n] \in \Sigma_s$, we denote by
  $dom(\sig)$ (resp. $Im(\sig)$) the set $\{\xi_1,...,\xi_n\}$
  (resp. $\{t_1,...,t_n\}$).
\end{itemize}
\end{definition}

\begin{lemma}\label{1l12}
Assume $t_l,t_r \in SN$ and $\langle t_l,t_r\rangle \not\in SN$.
Then either $t_l = \m \al \,c$ and $c[\al := t_r] \not\in SN$ or
$t_r = \mu x \,c$ and $c[x :=t_l] \not\in SN$.
\end{lemma}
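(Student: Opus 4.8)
The statement says that if $t_l, t_r \in SN$ (for the $\m\tilde\m$-reduction on the restricted grammar) but $\langle t_l, t_r \rangle \notin SN$, then the non-normalization is ``caused'' by an immediate root redex: either $t_l = \m\al\,c$ and $c[\al := t_r] \notin SN$, or $t_r = \m x\,c$ and $c[x := t_l] \notin SN$. The plan is to argue by induction on $\eta(t_l) + \eta(t_r)$ (the sum of the lengths of the longest reductions). Since $\langle t_l, t_r \rangle \notin SN$, there is a one-step reduct $u$ of $\langle t_l, t_r \rangle$ with $u \notin SN$. I would then do a case analysis on which redex is contracted to produce $u$.

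\textbf{Case analysis on the first step.} In the restricted grammar a $c$-term is exactly a pair $\langle t_l, t_r \rangle$, and the only redexes available are: (i) a redex entirely inside $t_l$, giving $u = \langle t_l', t_r \rangle$ with $t_l \tr_{\m\tilde\m} t_l'$; (ii) a redex entirely inside $t_r$, giving $u = \langle t_l, t_r' \rangle$ with $t_r \tr_{\m\tilde\m} t_r'$; (iii) the root redex $\tr_{\;\mu}$, possible when $t_l = \m\al\,c$, giving $u = c[\al := t_r]$; (iv) the root redex $\tr_{\;\tilde\mu}$, possible when $t_r = \m x\,c$, giving $u = c[x := t_l]$. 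In cases (iii) and (iv) we are immediately done, since then $u \notin SN$ is exactly one of the two conclusions. In case (i), $t_l' \in SN$ (as $t_l \in SN$) and $t_r \in SN$, while $\langle t_l', t_r \rangle = u \notin SN$; moreover $\eta(t_l') < \eta(t_l)$, so $\eta(t_l') + \eta(t_r) < \eta(t_l) + \eta(t_r)$ and the induction hypothesis applies to $\langle t_l', t_r \rangle$. This yields either $t_l' = \m\al\,c'$ with $c'[\al := t_r] \notin SN$, or $t_r = \m x\,c'$ with $c'[x := t_l'] \notin SN$. Case (ii) is symmetric.

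\textbf{Pulling the conclusion back across the first step.} The one genuine piece of work is to transfer the IH-conclusion from $\langle t_l', t_r \rangle$ back to $\langle t_l, t_r \rangle$ in case (i) (and symmetrically in (ii)). If the IH gives $t_r = \m x\,c'$ with $c'[x := t_l'] \notin SN$: since $t_l \tr_{\m\tilde\m} t_l'$ in one step, we have $c'[x := t_l] \tr_{\m\tilde\m}^* c'[x := t_l']$ (one step per occurrence of $x$, at least zero), hence $c'[x := t_l] \notin SN$, which is the desired conclusion with $c = c'$. If instead the IH gives $t_l' = \m\al\,c'$ with $c'[\al := t_r] \notin SN$: in the restricted grammar a $\m\tilde\m$-step cannot create a $\m$-abstraction at the root of an $l$-term out of something that was not already one (the reducts of $x$, and of $\m\al\,c$, stay of the same shape — $x$ reduces to nothing, and $\m\al\,c \tr t_l'$ forces $t_l' = \m\al\,c''$ with $c \tr c''$), so $t_l$ must itself be of the form $\m\al\,c$ with $c \tr_{\m\tilde\m}^* c'$; then $c[\al := t_r] \tr^* c'[\al := t_r] \notin SN$, so $c[\al := t_r] \notin SN$, again the desired conclusion. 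The only mildly delicate point to verify carefully is this ``a $\m$ is not created at the root'' observation for the restricted grammar — but it is immediate here because the grammar is so rigid — together with the routine fact that non-$SN$ is preserved when we substitute a term for the term it reduces to. So the whole argument is a short induction with a transparent case split; there is no real obstacle, just the bookkeeping of pushing non-$SN$-ness back through a single reduction step.
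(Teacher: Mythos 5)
Your proof is correct and follows essentially the same route as the paper: induction on $\eta(t_l)+\eta(t_r)$ with a case split on the first reduction step, where root reductions give the conclusion immediately and internal reductions are handled by the induction hypothesis. The explicit ``pull-back'' of the IH conclusion through the first step (using that a variable does not reduce, that $\m\al\,c$ only reduces to $\m\al\,c''$, and that non-$SN$ is inherited backwards along $\tr^*$) is exactly what the paper's terse ``we conclude by the induction hypothesis'' leaves implicit, so you have simply filled in the bookkeeping correctly.
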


\begin{proof}
By induction on $\eta(t_l)+\eta(t_r)$. Since $\langle
t_l,t_r\rangle \not\in SN$, $\langle t_l,t_r\rangle \tr t$ for
some $t$ such that $t \not \in SN$.  If $t = \langle
t'_l,t_r\rangle$ where $t_l \tr t'_l$, we conclude by the
induction hypothesis since $\eta(t'_l)+\eta(t_r) <
\eta(t_l)+\eta(t_r)$. If $t = \langle t_l,t'_r\rangle$ where $t_r
\tr t'_r$, the proof is similar. If $t_l = \m \al \, c$ and
$t=c[\al := t_r] \not\in SN$ or $t_r = \mu x \, c$ and $t=c[x
:=t_l] \not\in SN$, the result is trivial.
\end{proof}

\begin{lemma}\label{1crux}
\begin{enumerate}
\item Let $t$ be a term, $t_l$ a $l$-term and $\tau \in
\Sigma_l$. Assume $t_l \in SN$, $x$ is free in $t$ but not free in
$Im(\tau)$. If $t[\tau] \in SN$ but $t[\tau][x:=t_l] \not\in SN$,
there is $\langle x,t_r \rangle\prec t$ and $\tau' \in \Sigma_l$
such that $t_r[\tau'] \in SN$ and $\langle t_l , t_r[\tau']
\rangle \not\in SN$.
\item  Let $t$ be a term, $t_r$ a $r$-term and $\sig \in
\Sigma_r$. Assume $t_r \in SN$, $\al$ is free in $t$ but not free
in $Im(\sig)$. If $t[\sigma] \in SN$ but $t[\sigma][\al:=t_r]
\not\in SN$, there is $\langle t_l,\al \rangle\prec t$ and
$\sigma' \in \Sigma_r$ such that $t_l[\sigma'] \in SN$ and
$\langle t_l[\sigma'] , t_r\rangle \not\in SN$.
\end{enumerate}
\end{lemma}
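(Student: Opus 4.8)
The two parts are symmetric, so I will only discuss part (1); part (2) follows by exchanging the roles of $l$-terms and $r$-terms, of $x$ and $\al$, and of the two components of a $c$-term. The plan is to argue by induction on the pair $\eta c(t[\tau])$ (recall $t[\tau]\in SN$, so this is well-defined), using the lexicographic order, and to analyze how the infinite reduction out of $t[\tau][x:=t_l]$ begins. The key structural fact to exploit is the one highlighted just before the lemma: in the restricted $\m\tilde\m$-calculus there are no deep interactions, so every redex fired in a reduction of $t[\tau][x:=t_l]$ is either entirely inside $t$ (i.e.\ comes from the skeleton of $t$, possibly with $\tau$ and $[x:=t_l]$ carried along), or is created at an occurrence where a substituted term meets the surrounding context.

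First I would dispose of the base-ish cases by looking at the head constructor of $t$. If $t$ is a variable, then $x$ free in $t$ forces $t=x$, hence $t[\tau][x:=t_l]=t_l\in SN$, contradicting the hypothesis; so $t$ is proper. If $t=\m\be\,c$ or $t=\m y\,c$ (with the bound variable chosen fresh), then $t[\tau][x:=t_l]\not\in SN$ implies $c[\tau][x:=t_l]\not\in SN$ while $c[\tau]\le t[\tau]\in SN$; since $x$ is still free in $c$ and not free in $Im(\tau)$, and $\eta c(c[\tau])<\eta c(t[\tau])$ (as $c[\tau]$ is a strict subterm of $t[\tau]$, using the second item of the remark on $\prec$), the induction hypothesis applied to $c$ gives $\langle x,t_r\rangle\prec c\preceq t$, and I conclude with transitivity of $\prec$ noting $\langle x,t_r\rangle\prec c$ and $c\le t$ yields $\langle x,t_r\rangle\prec t$. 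The remaining case is $t=\langle t'_l,t'_r\rangle$ a $c$-term; here I distinguish according to whether $t'_l=x$ or not (and whether $t'_r$ involves $x$), and whether the first step of an infinite reduction of $t[\tau][x:=t_l]$ is internal to one component or is the top-level $\m$- or $\tilde\m$-redex.

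In the $c$-term case, write $u=t[\tau][x:=t_l]=\langle t'_l[\tau][x:=t_l],\,t'_r[\tau][x:=t_l]\rangle$. If $t'_l=x$ exactly (so the left component becomes $t_l$) and already $\langle t_l,\,t'_r[\tau][x:=t_l]\rangle\not\in SN$ while $t'_r[\tau][x:=t_l]\in SN$, then taking $t_r:=t'_r$ and $\tau':=\tau[x:=t_l]$ (composition as in the notation section, so that $t'_r[\tau']=t'_r[\tau][x:=t_l]$) we are done, since $\langle x,t'_r\rangle=t\preceq t$, hence $\preceq$, and one checks $t_r[\tau']\in SN$ as required. Otherwise the infinite reduction must eventually fire a redex that is internal to $t'_l[\tau][x:=t_l]$ or to $t'_r[\tau][x:=t_l]$, or a top redex $\langle\m\be\,c\,,\,\cdot\rangle$ or $\langle\cdot,\,\m y\,c\rangle$ whose $\m$ comes from the skeleton of $t$. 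In each such sub-case I reduce to a strictly smaller instance of the statement: either one of the components $t'_l,t'_r$ plays the role of $t$ (with the same $\tau$, restricted), or after one top step we have a term of the form $c'[\tau'']$ for a suitable $c'\preceq t$ and $\tau''\in\Sigma_l$ still not mentioning $x$ except through the distinguished slot, and $\eta c(c'[\tau''])<\eta c(t[\tau])$ because the top step is a genuine reduction step. Applying the induction hypothesis and composing $\prec$ with $\preceq$ finishes it.

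\textbf{Main obstacle.} The delicate point is bookkeeping the substitutions: after firing a $\m$- or $\tilde\m$-redex coming from $t$, the term $[x:=t_l]$ and the ambient $\tau$ get pushed inside and may be iterated, so I must check that the resulting term is still of the shape ``(something $\preceq t$) with a $\Sigma_l$-substitution applied, in which $x$ is free but absent from the image.'' Keeping the hypothesis ``$x$ not free in $Im(\tau')$'' invariant under these steps — in particular making sure that when $x$ occurs several times in $t$ we single out one occurrence and absorb the rest into the substitution — is the part that requires care; once that invariant is maintained, the strict decrease of $\eta c$ at every top step (and the strict decrease of $cxty$ at internal steps that do not decrease $\eta$) makes the induction go through, and the ``no deep interaction'' remark guarantees that a substituted $t_l$ can only be responsible for non-$SN$ by meeting, as its partner in some $c$-term $\langle t_l,t_r[\tau']\rangle$, a subterm $t_r$ with $\langle x,t_r\rangle\prec t$.
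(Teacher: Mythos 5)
There is a genuine gap, and it is exactly at the point you flag as the ``main obstacle'': your invariant cannot be maintained, and without replacing it by a stronger statement the induction does not close. In the $c$-term case you assume that the top-level $\mu$ (or $\tilde\mu$) responsible for the failure ``comes from the skeleton of $t$'', but the critical case is when the component of $t$ is a variable $y\in dom(\tau)$ and $\tau(y)=\mu\be\,c$: then the redex is created by the substitution, and after firing it the active term is a sub-term of some $t_i\in Im(\tau)$ carrying a substitution whose image now contains sub-terms of $t$ (in which $x$ may be free). So the residual is \emph{not} of the shape ``(something $\preceq t$) under a $\Sigma_l$-substitution with $x$ absent from the image'', and the recursive call you want to make is not an instance of the lemma at all: applying the statement to that configuration would locate $\langle x,t_r\rangle$ inside $t_i$, where $x$ does not even occur, rather than inside $t$ as the conclusion requires. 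Moreover the alternation can repeat (a sub-term of $t_i$ can in turn expose a variable substituted by a sub-term of $t$, and so on), so no one-step patch restores the invariant.

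The paper resolves this by proving a genuinely stronger statement: it fixes ${\cal U}$ (proper terms $\peq t$) and ${\cal V}$ (proper terms $\peq t_i$), defines two mutually recursive classes $\Sigma'_l,\Sigma'_r$ of ``layered'' substitutions whose images alternate between ${\cal V}$-terms under $\Sigma'_r$-substitutions and ${\cal U}$-terms under $\Sigma'_l$-substitutions, and proves two simultaneous claims (for $u[\rho]$, $u\in{\cal U}$, $\rho\in\Sigma'_l$, and for $v[\delta]$, $v\in{\cal V}$, $\delta\in\Sigma'_r$) by induction on $\eta c(u[\rho])$, resp. $\eta c(v[\delta])$, using Lemmas \ref{1tri} and \ref{1l12} for the case analysis. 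In this generalized statement the hypothesis ``$x$ not free in the image'' is deliberately dropped (it fails for the layered substitutions), and $[x:=t_l]$ is only re-applied at the very end, when $u_l=x$ is reached and one takes $\tau'=\rho[x:=t_l]$. Your proposal contains the right intuition (``no deep interaction'', induction on $\eta c$), and your easy case $t'_l=x$ is essentially fine up to taking $\tau'=\tau+[x:=t_l]$ rather than $\tau[x:=t_l]$, but the missing device is precisely this simultaneous, two-layer strengthening; as written, the induction hypothesis is not applicable in the case where the offending $\mu$-abstraction originates in $Im(\tau)$, so the proof does not go through.
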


\begin{proof} We prove the case (1) (the case (2) is similar). Note that $t_l$ is proper
 since $t[\tau] \in SN$,
$t[\tau][x:=t_l] \not\in SN$ and $x$ is not free in $Im(\tau)$.
Let $Im(\tau)= \{t_1,...,t_k\}$. Let ${\cal U}=\{u$ / $u$ is
proper and $u \peq t\}$ and ${\cal V}=\{v$ / $v$ is proper
 and $v \peq t_i$ for some $i\}$. Define inductively the
sets $\Sigma'_l$ and $\Sigma'_r$ of substitutions by the following
rules:

\medskip

\noindent $\rho \in \Sigma'_l$ iff $\rho = \es$ or $\rho=\rho' +
[y := v[\delta]]$ for some $l$-term $v \in {\cal V}$, $\delta  \in
\Sigma'_r$ and $\rho' \in \Sigma'_l$

 \noindent $\delta \in \Sigma'_r$ iff
$\delta = \es$ or $\delta=\delta' + [\beta := u[\rho]]$ for some
$r$-term $u \in {\cal U}$, $\rho \in \Sigma'_l$ and $\delta' \in
\Sigma'_r$

\medskip

\noindent Denote by C the conclusion of the lemma, i.e. there is
$\langle x,t_r \rangle\prec t$ and $\tau' \in \Sigma_l$ such that
$t_r[\tau'] \in SN$ and $\langle t_l , t_r[\tau'] \rangle \not\in
SN$.
 We prove something more general.

\medskip

\noindent (1) If $u \in {\cal U}$, $\rho \in \Sigma'_l$,
$u[\rho]\in SN$ and $u[\rho][x:=t_l]\not\in SN$, then C holds.

\noindent (2) If $v \in {\cal V}$, $\delta \in \Sigma'_r$,
$v[\delta]\in SN$ and $v[\delta][x:=t_l]\not\in SN$, then C holds.
\medskip

The term $t$ is proper since $t[\tau][x:=t_l] \not \in SN$. Then
conclusion C follows from (1) with $t$ and $\tau$.

 The
properties (1) and (2) are proved by a simultaneous induction on
$\eta c(u[\rho])$ (for the first case) and $\eta c(v[\delta])$
(for the second case).
 We only consider  (1), the case (2) is proved in a similar way.
\begin{itemize}
  \item
 If $u$ begins with a $\m$. The result follows from the induction
hypothesis.
  \item If $u = \langle u_l,u_r \rangle$.

\begin{itemize}
  \item If $u_r[\rho][x:=t_l] \not \in
SN$:  then $u_r$ is proper and the result follows from the
induction hypothesis.
  \item If
$u_l[\rho][x:=t_l] \not \in SN$ and $u_l$ is proper: the result
follows from the induction hypothesis.
  \item If $u_l[\rho][x:=t_l]
\not \in SN$ and  $u_l = y \in dom(\rho)$. Let $\rho(y) = \mu \be
\, d [\delta]$, then $\mu \be d[\delta][x:=t_l] \not \in SN$ and
the result follows from the induction hypothesis with $\mu \be d$
and $\delta$ (case (2)) since $\eta c(\mu \be d[\delta]) < \eta
c(u[\rho])$.
\item Otherwise, by lemma \ref{1l12}, there are two cases to consider. Note that $u_r$ cannot be a variable because, otherwise, $u[\rho][x:=t_l] = \langle
u_l[\rho][x:=t_l], u_r \rangle$ and thus, by lemma \ref{1tri},
$u[\rho][x:=t_l]$ would be in $SN$.

\hspace{.5cm} (1) $u_l[\rho][x:=t_l] = \mu \al \, c$ and $c[\al :=
u_r[\rho][x:=t_l]] \not \in SN$.

 \hspace{1.5cm} -  If $u_l = \mu \al \, d$, then $d[\al := u_r][\rho][x:=t_l] \not
\in SN$ and  the result follows from the induction hypothesis with
$ d[\al := u_r]$ and
 $\rho$ since $\eta( d[\al := u_r][\rho]) < \eta(u[\rho])$.

 \hspace{1.5cm} - If $u_l = y
 \in dom(\rho)$, let $\rho(y) = \mu \be \, d [\delta]$, then
 $d[\delta'][x:=t_l] \not \in SN$ where $\delta' =
\delta + [\beta :=  u_r[\rho]]$ and the result follows
 from the induction hypothesis with $d$ and $\delta'$ (case(2)).

\hspace{1.5cm} - If $u_l = x$, then $\langle x,u_r \rangle$ and
$\tau'=\rho[x:=t_l]$ satisfy the desired conclusion.

\hspace{.5cm} (2) $u_r[\rho][x:=t_l] = \mu y \, c$ and $c[\al :=
u_l[\rho][x:=t_l]] \not \in SN$. Then  $u_r = \mu y \, d$ and $d[y
:= u_l][\rho][x:=t_l] \not \in SN$. The result follows from the
induction hypothesis with $d[y := u_l]$ and
 $\rho$ since $\eta(d[y := u_l][\rho]) < \eta(u[\rho])$.\qed
 \end{itemize}
 \end{itemize}
\end{proof}

\begin{theorem}\label{1thm'1}
The  $\mu\tilde{\mu}$-reduction is strongly normalizing.
\end{theorem}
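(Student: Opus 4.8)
The plan is to deduce the strong normalization of the $\m\tilde\m$-reduction on the restricted grammar (which, as noted, suffices for the full calculus) from a single key claim:

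\medskip

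\noindent\emph{Claim.} If $t_l$ is a $l$-term in $SN$ and $t_r$ is a $r$-term in $SN$, then $\langle t_l, t_r\rangle \in SN$.

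\medskip

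\noindent Granting this claim, the theorem follows by a routine induction on the structure of terms: a variable $x$ or $\al$ is trivially in $SN$; if $c = \langle t_l,t_r\rangle$, then by the induction hypothesis $t_l,t_r \in SN$, hence $c\in SN$ by the claim; if $t_l = \m\al\,c$ with $c\in SN$ by induction hypothesis, then $t_l\in SN$ since any infinite reduction of $t_l$ would be (after stripping the leading $\m\al$, which cannot be consumed in isolation) an infinite reduction of $c$; symmetrically for $t_r=\m x\,c$. So the real content is the Claim.

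\medskip

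\noindent To prove the Claim I would argue by induction on $\eta c(t_l) + \eta c(t_r)$ (componentwise sum of the pairs, compared lexicographically), or more simply on $\eta(t_l)+\eta(t_r)$ with $cxty$ as tie-breaker. Suppose for contradiction that $t_l,t_r\in SN$ but $\langle t_l,t_r\rangle\notin SN$. By Lemma~\ref{1l12}, either $t_l = \m\al\,c$ with $c[\al:=t_r]\notin SN$, or $t_r=\m x\,c$ with $c[x:=t_l]\notin SN$; the two cases are symmetric, so treat the first. Now $c\in SN$ (since $t_l=\m\al\,c\in SN$) and $c[\al:=t_r]\notin SN$. Apply Lemma~\ref{1crux}(2) with the term $c$, the $r$-term $t_r$, the empty substitution $\sig=\es\in\Sigma_r$, and the variable $\al$ (which is free in $c$ and trivially not free in $Im(\es)$): we obtain $\langle t_l',\al\rangle \prec c$ and $\sig'\in\Sigma_r$ such that $t_l'[\sig']\in SN$ and $\langle t_l'[\sig'], t_r\rangle\notin SN$. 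The point is that $\langle t_l',\al\rangle\prec c \leq t_l$, hence by the last bullet of the remark following the definition of $\prec$ we get $\langle t_l',\al\rangle \prec \m\al\,c = t_l$ (using $t_l\tr^+ $ something containing $\langle t_l',\al\rangle$, or a strict-subterm step), so $\eta c(\langle t_l',\al\rangle) < \eta c(t_l)$, and therefore $\eta c(t_l') < \eta c(t_l)$ as well. Meanwhile $t_l'[\sig']$ and $t_r$ are both in $SN$ with $\langle t_l'[\sig'], t_r\rangle\notin SN$. To close the induction I need the sum of the measures of $t_l'[\sig']$ and $t_r$ to have dropped; since $t_r$ is unchanged, it suffices that $\eta c(t_l'[\sig']) < \eta c(t_l)$, which should follow because $t_l'[\sig']$ is $SN$ and is built from a strict sub-structure of $t_l$ substituted into, so its longest reduction is bounded appropriately — this is exactly the sort of bookkeeping the $\prec$-machinery in Lemma~\ref{1crux} is designed to provide.

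\medskip

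\noindent The step I expect to be the main obstacle is precisely making the measure strictly decrease: showing that passing from $(t_l,t_r)$ to $(t_l'[\sig'],t_r)$ genuinely lowers the induction parameter. The substitution $\sig'$ may substantially enlarge $t_l'$, so $cxty$ gives nothing for free, and one must argue at the level of $\eta$ (reduction length) using that $\langle t_l',\al\rangle$ is a \emph{strict} $\prec$-predecessor of $t_l$ combined with control over what $\sig'$ can contribute. The cleanest route is probably to strengthen the induction hypothesis of the Claim itself to speak about $\langle t_l[\r], t_r[\de]\rangle$ for $\r\in\Sigma_l$, $\de\in\Sigma_r$ built from sub-terms, mirroring the generalized statements (1) and (2) inside the proof of Lemma~\ref{1crux}; then the decrease is visible because the outer $\langle\cdot,\cdot\rangle$-structure strictly shrinks while the substitutions stay within the same controlled family. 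Once the right invariant is chosen, each case is routine, using Lemma~\ref{1tri} to dispose of the degenerate situation where one component is a variable.
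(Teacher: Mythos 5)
Your reduction to the Claim and your use of Lemmas \ref{1l12} and \ref{1crux} follow exactly the route of the paper, and you correctly diagnosed the one real difficulty: after Lemma \ref{1crux} returns $\langle x,u_r\rangle\prec d$ together with a substitution $\tau'$, the term $u_r[\tau']$ can be much larger than the original component, so an induction whose measure is computed on the terms actually occurring in the pair does not decrease. But this means that the argument you actually carry out does not close, and the repair is only gestured at (``the cleanest route is probably\dots'', ``each case is routine''), so as a proof there is a genuine gap at precisely the step you flagged. Moreover the invariant you hint at is not quite the right one: the substitution $\tau'$ produced by Lemma \ref{1crux} is an arbitrary element of $\Sigma_l$ whose image involves $t_l$ itself, so it is \emph{not} ``built from sub-terms'' of the original data, and your pairing of the substitution families with the two sides is reversed.

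The way the paper closes the gap is to prove the strengthened statement: if $\sigma\in\Sigma_r$, $\tau\in\Sigma_l$ and $t_l[\sigma],t_r[\tau]\in SN$, then $\langle t_l[\sigma],t_r[\tau]\rangle\in SN$, arguing by a minimal counterexample with respect to $(\eta(t_l)+\eta(t_r),\,cxty(t_l)+cxty(t_r))$ computed on the \emph{unsubstituted skeletons} $t_l,t_r$. The orientation matters: since $\sigma$ only touches $r$-variables and $\tau$ only $l$-variables, the head constructor of $t_l[\sigma]$ (resp. $t_r[\tau]$) always comes from $t_l$ (resp. $t_r$) — if the skeleton is a variable, Lemma \ref{1tri} already gives $SN$. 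So Lemma \ref{1l12} forces, say, $t_r=\mu x\,d$ with $d[\tau][x:=t_l[\sigma]]\notin SN$, and Lemma \ref{1crux} applied to $d$, $\tau$ and $t_l[\sigma]$ yields $u_r\prec d$ and $\tau'\in\Sigma_l$ with $u_r[\tau']\in SN$ and $\langle t_l[\sigma],u_r[\tau']\rangle\notin SN$; since $\eta c(u_r)<\eta c(t_r)$ this contradicts minimality, and no control whatsoever on the size or provenance of $\tau'$ is needed. With your reversed pairing ($t_r$ carrying a $\Sigma_r$-substitution) the $\mu$ at the head of $t_r[\delta]$ could come entirely from the substitution when the skeleton $t_r$ is a variable, and then there is no smaller skeleton to recurse on, so that version of the invariant would not go through. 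With the orientation corrected and the measure put on the skeletons, your plan becomes exactly the paper's proof.
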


\begin{proof}
By induction on the term. It is enough to show that, if $t_l,t_r
\in SN$, then $\langle t_l , t_r \rangle \in SN$. We prove
something more general: let $\sigma$ (resp. $\tau$) be in
$\Sigma_r$ (resp. $\Sigma_l$) and assume $t_l[\sigma],t_r[\tau]
\in SN$. Then $\langle t_l[\sigma],t_r[\tau]\rangle \in SN$.
Assume it is not the case and choose some elements such that
$t_l[\sigma],t_r[\tau] \in SN$, $\langle
t_l[\sigma],t_r[\tau]\rangle \not\in SN$ and
$(\eta(t_l)+\eta(t_r), cxty(t_l)+cxty(t_r))$ is minimal. By lemma
\ref{1l12}, either $t_l[\sigma] = \mu \al \, c$ and $c[\al :=
t_r[\tau]] \not \in SN$ or $t_r[\tau] = \mu x \, c$ and $c[x :=
t_l[\sigma]] \not \in SN$. Look at the second case (the first one
is similar). We have $t_r = \mu x \, d$ and $d[\tau] = c$, then
$d[\tau][x:=t_l[\sigma]] \not \in SN$. By lemma \ref{1crux}, let
$u_r\prec d$ and $\tau' \in \Sigma_l$ be such that $u_r[\tau'] \in
SN$, $\langle t_l[\sigma] , u_r[\tau'] \not\in SN$.  This
contradicts the minimality of the chosen elements since $\eta
c(u_r) < \eta c(t_r)$.
\end{proof}

\subsection{The typed $\overline{\lambda}\mu\tilde{\mu}$-calculus is strongly normalizing}\label{1beta}

In this section, we consider the typed calculus with the
$l$-reduction. By theorem \ref{1post}, this is enough to prove the
strong normalization of the full calculus. To simplify  notations,
we do not write explicitly the type information but, when needed,
we denote by $type(t)$ the type of the term $t$.

The proof is essentially the same as the one of theorem
\ref{1thm'1}. It relies on  lemma \ref{1crux'} for which type
considerations are needed:
 in its proof, some cases cannot be proved ``by themselves''
and we need an argument using the types. For this reason, its
proof is done using the additional fact that we already know that,
if $t_l,t_r \in SN$ and the type of $t_r$ is small, then
$t[x:=t_r]$ also is in $SN$. Since the proof of lemma \ref{1thm}
is done by induction on the type, when we will use lemma
\ref{1crux'}, the additional hypothesis will be available.
\begin{lemma}\label{1l30}
Assume $t_l,t_r \in SN$ and $\langle t_l,t_r\rangle \not\in SN$.
Then either ($t_l = \m \al \,c$ and $c[\al := t_r] \not\in SN$) or
($t_r = \mu x \,c$ and $c[x :=t_l] \not\in SN$) or ($t_l = \l x
u_l$, $t_r = u'_l.u_r$ and $\langle u'_l,\mu x \langle u_l , u_r
\rangle \rangle \not\in SN$) or  ($t_r = \l \al u_r$, $t_l =
u'_r.u_l$ and $\langle \mu \al \langle u_r , u_l \rangle,u'_r
\rangle \not\in SN$).
\end{lemma}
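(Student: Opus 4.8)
\textbf{Proof plan for Lemma \ref{1l30}.}
The statement is the analogue, for the full $l$-reduction, of Lemma \ref{1l12}, and I would prove it in exactly the same style: by induction on $\eta(t_l)+\eta(t_r)$. Since $\langle t_l,t_r\rangle\not\in SN$, there is a one-step reduct $u$ of $\langle t_l,t_r\rangle$ with $u\not\in SN$. The proof amounts to a case analysis on which redex is contracted to produce $u$, using the fact that $\langle\;,\;\rangle$ has no reduction rules acting ``above'' it, so a redex must lie inside $t_l$, inside $t_r$, or be the outermost cut $\langle t_l,t_r\rangle$ itself.

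First I would dispose of the two ``internal'' cases: if $u=\langle t_l',t_r\rangle$ with $t_l\tr t_l'$ (necessarily by an $l$-step), then $t_l'\in SN$ since $t_l\in SN$, and $\eta(t_l')+\eta(t_r)<\eta(t_l)+\eta(t_r)$, so the induction hypothesis applied to $\langle t_l',t_r\rangle$ gives one of the four disjuncts; one then checks that each disjunct for $\langle t_l',t_r\rangle$ lifts back to the corresponding disjunct for $\langle t_l,t_r\rangle$ (e.g. if $t_l'=\m\al\,c'$ with $c'[\al:=t_r]\not\in SN$, then either $t_l$ already had the form $\m\al\,c$ with $c\tr^* c'$, whence $c[\al:=t_r]\not\in SN$, or $t_l=\m\al\,d$ reduced inside, etc.). The symmetric case $u=\langle t_l,t_r'\rangle$ with $t_r\tr t_r'$ is identical. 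This ``lifting'' bookkeeping is mildly tedious but entirely routine, since reduction inside $t_l$ cannot destroy the head constructor that each disjunct requires without itself being an instance of one of the head-reduction cases.

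It then remains to treat the case where the contracted redex is the outermost cut. By inspection of the reduction rules whose left-hand side is a $c$-term $\langle\cdot,\cdot\rangle$, this redex is of one of exactly four shapes: $\langle\m\al\,c,t_r\rangle\tr_\mu c[\al:=t_r]$, giving the first disjunct; $\langle t_l,\m x\,c\rangle\tr_{\tilde\mu} c[x:=t_l]$, giving the second; $\langle\l x\,u_l,u_l'.u_r\rangle\tr_\l\langle u_l',\m x\langle u_l,u_r\rangle\rangle$, giving the third; and $\langle u_r'.u_l,\l\al\,u_r\rangle\tr_{\bar\l}\langle\m\al\langle u_r,u_l\rangle,u_r'\rangle$, giving the fourth. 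In each of these four subcases $u$ itself is the witnessed term and is not in $SN$ by assumption, so the conclusion is immediate. Note the $s_l$, $s_r$ rules do not apply here since their left-hand sides are $\m$-terms, not $c$-terms, so they are already covered by the ``reduction inside $t_l$ or $t_r$'' cases.

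The only point requiring any care is the lifting step in the internal cases, and even there the main subtlety is simply to remember that when, say, $t_l$ is an application $u_r'.u_l$ and a reduction happens strictly inside it, the head symbol is preserved, so a later $\bar\l$-redex at the root remains available; thus the disjunct obtained from the induction hypothesis for the reduct can always be pulled back. I expect no genuine obstacle: this lemma is purely a structural ``first step'' analysis, and the real work of the section is deferred to Lemma \ref{1crux'} and the final strong normalization theorem, exactly as in the untyped case where Lemma \ref{1l12} feeds into Lemma \ref{1crux} and Theorem \ref{1thm'1}.
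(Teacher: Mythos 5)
Your proposal is correct and is essentially the paper's own argument: the paper proves Lemma \ref{1l30} exactly as Lemma \ref{1l12}, by induction on $\eta(t_l)+\eta(t_r)$, with the internal-reduction cases handled by the induction hypothesis and the root cases now splitting into the four shapes of $l$-redexes at a cut. Your extra bookkeeping on lifting the disjuncts back through an internal step (using that $l$-rules only fire on $c$-terms, so head constructors of $l$- and $r$-terms are preserved) is precisely what the paper leaves implicit.
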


\begin{proof}
By induction on $\eta(t_l)+\eta(t_r)$.
\end{proof}

\begin{definition}
Let $A$ be a type. We denote  $\Sigma_{A,l}$ (resp.
$\Sigma_{A,r}$) the set of substitutions of the form
$[x_1:=t_1,...,x_n:=t_n]$ (resp. $[\al_1:=t_1,...,\al_n:=t_n]$)
  where $t_1,...,t_n$ are proper $l$-terms (resp. $r$-terms)  and the type of the $x_i$ (resp. $\al_i$) is $A$.

\end{definition}

\begin{lemma}\label{1crux'}
Let $n$ be an integer and $A$ be a type such that $lg(A)=n$.
Assume $H$ holds where $H$ is:
 for every $u,v \in SN$ such that $lg(type(v))<n$, $u[x:=v]
\in SN$.
\begin{enumerate}
\item Let $t$ be a term, $t_l$ a $l$-term and $\tau \in
\Sigma_{A,l}$. Assume $t_l \in SN$ and has type $A$, $x$ is free
in $t$ but not free in $Im(\tau)$. If $t[\tau] \in SN$ but
$t[\tau][x:=t_l] \not\in SN$, there is $\langle x,t_r \rangle\prec
t$ and $\tau' \in \Sigma_{A,l}$ such that $t_r[\tau'] \in SN$ and
$\langle t_l , t_r[\tau'] \rangle \not\in SN$.
\item  Let $t$ be a term, $t_r$ a $r$-term and $\sig \in
\Sigma_{A,r}$. Assume $t_r \in SN$ and has type $A$, $\al$ is free
in $t$ but not free in $Im(\sig)$. If $t[\sigma] \in SN$ but
$t[\sigma][\al:=t_r] \not\in SN$, there is $\langle t_l,\al
\rangle\prec t$ and $\sigma' \in \Sigma_{A,r}$ such that
$t_l[\sigma'] \in SN$ and $\langle t_l[\sigma'] , t_r\rangle
\not\in SN$.
\end{enumerate}
\end{lemma}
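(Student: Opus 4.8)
The plan is to mimic the proof of Lemma~\ref{1crux} as closely as possible, carrying along the extra hypothesis $H$ to handle exactly the cases that could not be closed ``by themselves'' in the untyped setting. I would prove case~(1) in detail and note that case~(2) is symmetric. As before, $t_l$ is proper because $t[\tau]\in SN$, $t[\tau][x:=t_l]\notin SN$ and $x$ is not free in $Im(\tau)$. Write $Im(\tau)=\{t_1,\dots,t_k\}$, set ${\cal U}=\{u \mbox{ proper} : u\peq t\}$ and ${\cal V}=\{v \mbox{ proper} : v\peq t_i \mbox{ for some }i\}$, and define the auxiliary substitution classes $\Sigma'_l,\Sigma'_r$ by the same mutual induction as in Lemma~\ref{1crux}, with the additional bookkeeping that every substituted variable of type $A$ receives a term of type $A$, so that all substitutions involved stay in $\Sigma_{A,l}$ or $\Sigma_{A,r}$. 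Then I would state the same strengthened claim: for $u\in{\cal U}$, $\rho\in\Sigma'_l$ with $u[\rho]\in SN$ and $u[\rho][x:=t_l]\notin SN$, the conclusion C holds; and symmetrically for $v\in{\cal V}$, $\delta\in\Sigma'_r$. This is proved by simultaneous induction on $\eta c(u[\rho])$ and $\eta c(v[\delta])$, and the desired conclusion follows by applying the claim to $t$ and $\tau$.

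The case analysis on $u$ proceeds as in Lemma~\ref{1crux}, but now using Lemma~\ref{1l30} in place of Lemma~\ref{1l12}, so there are two new shapes for the redex at the root of $u[\rho][x:=t_l]$: the $\la$-case ($u_l[\rho][x:=t_l]=\la z\,w_l$, $u_r[\rho][x:=t_l]=w'_l.w_r$, and $\langle w'_l,\mu z\langle w_l,w_r\rangle\rangle\notin SN$) and the $\overline\la$-case (symmetric). When $u_l$ (resp.\ $u_r$) already has the relevant head shape, or when it is a variable $y\in dom(\rho)$ whose image under $\rho$ does, the contractum can be re-expressed as $w[\rho'][x:=t_l]\notin SN$ for a strictly smaller $\eta c$, and the induction hypothesis applies --- exactly the same manipulations as before. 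When $u_l=x$, the pair $\langle x,u_r\rangle$ together with $\tau'=\rho[x:=t_l]$ witnesses C.

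The genuinely new point --- and the step I expect to be the main obstacle --- is the subcase where the head redex is a $\la$- (or $\overline\la$-) redex but the head constructor was \emph{created} by the substitution $[x:=t_l]$, i.e.\ $u_l=x$ while $u_r$ is \emph{not} of the form $w'_l.w_r$ in $u$, or dually $u_r[\rho][x:=t_l]$ produces the $\la\al$ by substitution. In that situation one cannot blame a strictly smaller reduct, and this is precisely where $H$ enters: the newly formed redex involves a subterm whose type is a strict subformula of $A$ (the argument part $w'_l$ or the body $w_r$ has type strictly shorter than $lg(A)=n$, by the typing rules for $\f$ and $-$), so by $H$ the corresponding substitution keeps things in $SN$, yielding a contradiction with $u[\rho][x:=t_l]\notin SN$ or reducing to an already-handled case. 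I would need to check carefully that in every such ``created-redex'' configuration the relevant type is indeed of length $<n$ --- this uses that in $\la x\,t_l:A\f B$ and $t_r.t_l:A-B$ the immediate subterms carry the strictly smaller types $A$ and $B$ --- and that the remaining routine subcases (head $\m$, both components proper, component a variable in $dom(\rho)$ with a $\m$-image) go through verbatim as in Lemma~\ref{1crux}. Once this lemma is in place, Lemma~\ref{1thm} and hence the strong normalization of the typed calculus follow by the same argument as Theorem~\ref{1thm'1}, now run by induction on $lg(A)$ so that $H$ is available at each stage.
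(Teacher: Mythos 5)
Your skeleton is the right one (same sets ${\cal U},{\cal V}$, the typed substitution classes, the simultaneous induction on $\eta c$, lemma \ref{1l30} in place of \ref{1l12}, and the witness $\langle x,u_r\rangle$ with $\tau'=\rho[x:=t_l]$ when $u_l=x$), but you have put the typing hypothesis $H$ in the wrong place, and the case you single out as the ``genuinely new point'' is either the immediate witness case or cannot occur at all. A head $\l$ (or dot, or $\l\al$) created by $[x:=t_l]$ means precisely $u_l=x$, and that case needs no typing argument: since $u_r[\rho][x:=t_l]\in SN$ and $u[\rho][x:=t_l]=\langle t_l,\,u_r[\rho][x:=t_l]\rangle\not\in SN$, the pair $\langle x,u_r\rangle$ and $\tau'=\rho[x:=t_l]$ already witness C. Moreover the configuration ``$u_l=x$ while $u_r$ is not of the form $w'_l.w_r$'' is vacuous: $\rho$ and $[x:=t_l]$ substitute only $l$-variables by proper terms and hence preserve the head constructor of $u_r$, so if $u_r[\rho][x:=t_l]$ is dot-headed (or $\l\al$-headed) then $u_r$ is too.

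The step that genuinely needs $H$ is the one you claim goes through ``exactly the same manipulations as before'': $u_l=y\in dom(\rho)$ with $\rho(y)=\l z\,w_l[\delta]$ (and the symmetric $\overline{\l}$ situation). There the contractum is $\langle w'_l[\rho],\,\m z\langle w_l[\delta],w_r[\rho]\rangle\rangle[x:=t_l]$, which interleaves material from the ${\cal U}$-side ($w'_l,w_r$) and the ${\cal V}$-side ($w_l$) inside a single term; to absorb it into the form $w[\rho'][x:=t_l]$ you would need new bindings at the types of $w'_l$ and $w_l[\delta]$, which are the immediate subformulas of $A$, whereas $\Sigma'_{A,l}$ and $\Sigma'_{A,r}$ bind only variables of type $A$ --- a restriction you cannot relax, since the final $\tau'$ must lie in $\Sigma_{A,l}$. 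Nothing ``before'' covers this: in lemma \ref{1crux} the restricted grammar has no $\l/\overline{\l}$ redexes, so only $\m$-headed images of $\rho$ ever occur, and those are the only ones the old manipulations handle. The paper closes this case exactly with your smaller-type idea, but located here: it writes the offending term as $\langle x_2,\m z\langle x_1,d\rangle\rangle[x_1:=w_l[\delta][x:=t_l]][x_2:=w'_l[\rho][x:=t_l]]$, notes that both substituted terms are in $SN$ and have types of length $<n$, and contradicts $H$. As written, your induction does not close in this case, so the proof has a real gap even though the missing argument is the one you sketched for the wrong subcase.
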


\begin{proof}
 We only prove the case (1),
the other one is similar. Note that $t_l$ is proper since $t[\tau]
\in SN$ and $t[\tau][x:=t_l] \not\in SN$.  Let $Im(\tau)=
\{t_1,...,t_k\}$. Let ${\cal U}=\{u$ / $u$ is proper and $u \peq
t\}$ and ${\cal V}=\{v$ / $v$ is proper and $v \peq t_i$ for some
$i\}$. Define inductively the sets $\Sigma'_{A,l}$ and
$\Sigma'_{A,r}$ of substitutions by the following rules:

\medskip

\noindent $\rho \in \Sigma'_{A,l}$ iff $\rho = \es$ or $\rho=\rho'
+ [y := v[\delta]]$ for some $l$-term $v \in {\cal V}$, $\delta
\in \Sigma'_{A,r}$, $\rho' \in \Sigma'_{A,l}$ and $y$ has type
$A$.

\noindent $\delta \in \Sigma'_{A,r}$ iff $\delta = \es$ or
$\delta=\delta' + [\beta := u[\rho]]$ for some $r$-term $u \in
{\cal U}$, $\rho \in \Sigma'_{A,l}$, $\delta' \in \Sigma'_{A,r}$
and $\beta$ has type $A$.

\medskip

\noindent Denote by C the conclusion of the lemma, i.e. there is
$\langle x,t_r \rangle\prec t$ and $\tau' \in \Sigma_{A,l}$ such
that $t_r[\tau'] \in SN$ and $\langle t_l , t_r[\tau'] \rangle
\not\in SN$.
 We prove something more general.

 \medskip

\noindent (1) If $u \in {\cal U}$, $\rho \in \Sigma'_{A,l}$,
$u[\rho]\in SN$ and $u[\rho][x:=t_l]\not\in SN$, then C holds.

\noindent (2) If $v \in {\cal V}$, $\delta \in \Sigma'_{A,r}$,
$v[\delta]\in SN$ and $v[\delta][x:=t_l]\not\in SN$, then C holds.

\medskip

Note that, since $t[\tau][x:=t_l] \not \in SN$,  $t$ is proper and
thus, C  follows from (1) with $t$ and $\tau$. The properties (1)
and (2) are proved by a simultaneous induction on $\eta
c(u[\rho])$ (for the first case) and $\eta c(v[\delta])$ (for the
second case). We only consider (1) since (2) is similar.

The proof is as in lemma \ref{1crux}. We only consider the
additional cases:
 $u = \langle u_l,u_r \rangle$, $u_l[\rho][x:=t_l]\in
SN$, $u_r[\rho][x:=t_l]\in SN$, $u_r$ is proper and one of the two
following cases occurs.

\begin{itemize}
\item $u_l[\rho][x:=t_l] = \l x v_l$, $u_r[\rho][x:=t_l] = v'_l.v_r$
and $\langle v'_l,\m x \langle v_l , v_r \rangle \rangle \not \in
SN$. Then,   $u_r = w'_l.w_r$, $v'_l = w'_l[\rho][x:=t_l] $ and
$v_r = w_r[\rho][x:=t_l]$. There are three cases to consider.
\begin{itemize}
  \item $u_l = \l x w_l$ and $w_l[\rho][x:=t_l] = v_l$, then the result
follows from the induction hypothesis with $\langle w'_l,\m x
\langle w_l , w_r \rangle \rangle$ and $\rho$ since $\eta(\langle
w'_l,\m x \langle w_l , w_r \rangle \rangle [\rho]) <
\eta(u[\rho])$.

  \item $u_l = y \in dom(\rho)$. Let $\rho(y) = \l z w_l[\d]$, then
$a= \langle w'_l[\rho],\m x \langle w_l[\d] , w_r[\rho] \rangle
\rangle$  $ [x:=t_l] \not \in SN$. But,

\hspace{1cm} - $b=w'_l[\rho][x:=t_l],
c=w_l[\d][x:=t_l],d=w_r[\rho][x:=t_l] \in SN$,

\hspace{1cm} - $lg(type(b)) <n$, $lg(type(c)) <n$,

\hspace{1cm} - $ a = \langle x_2,\m x \langle x_1, d \rangle
\rangle [x_1:=c][x_2:= b]$

and this contradicts
 the hypothesis $(H)$.
  \item $u_l = x$, then $\langle x , u_r \rangle$ and $\t' = \t [x:=t_l]$
   satisfy the desired conclusion.
\end{itemize}

\item $u_l[\rho][x:=t_l] = v'_r.v_l$, $u_r[\rho][x:=t_l] = \l \al v_r$
and $\langle \m \al \langle v_l , v_r \rangle,  v'_r \rangle \not
\in SN$. The proof is similar.\qed
\end{itemize}
\end{proof}

\begin{lemma}\label{1thm}
If $t,t_l,t_r \in SN$, then $t[x:=t_l],t[\al:=t_r]\in SN$.
\end{lemma}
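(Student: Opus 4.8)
The plan is to prove Lemma~\ref{1thm} by induction on $lg(type(t_l))$ (resp. $lg(type(t_r))$), so that for a fixed type $A$ with $lg(A) = n$ the induction hypothesis gives exactly the statement $H$ of Lemma~\ref{1crux'}: for every $u,v \in SN$ with $lg(type(v)) < n$, $u[x:=v] \in SN$ (and the symmetric version for $r$-substitutions). This is what makes Lemma~\ref{1crux'} applicable. Within the fixed type $A$, I would then argue by a secondary induction. The two halves ($t[x:=t_l]$ and $t[\al:=t_r]$) are symmetric, so I only treat $t[x:=t_l]$.

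First I would reduce, as in the proof of Theorem~\ref{1thm'1}, to a more general statement handling simultaneous substitutions: if $\tau \in \Sigma_{A,l}$ and $t[\tau] \in SN$ and all members of $Im(\tau)$ are in $SN$ with type $A$, then $t[\tau] \in SN$ --- wait, that is trivially what we assumed; rather the statement to prove is that $t[x:=t_l]$ stays in $SN$, and I would phrase the general claim as: for $t_l \in SN$ of type $A$ and $t \in SN$, $t[x := t_l] \in SN$, proved by induction on the pair $(\eta c(t), \eta c(t_l))$ or on some well-chosen measure. Suppose for contradiction $t[x:=t_l] \notin SN$ while $t, t_l \in SN$. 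Then $t[x:=t_l] \tr u$ with $u \notin SN$; analysing the redex contracted, if it lies entirely inside (a copy of) $t_l$ or entirely inside $t$, we descend by the induction hypothesis. The interesting redexes are those created by the substitution, i.e. where an occurrence of $x$ in $t$ has been replaced by $t_l$ and $t_l$ now interacts with the surrounding context --- this happens at a subterm of $t$ of the form $\langle x, t_r \rangle$ (or $\langle t_l', x\rangle$ after substitution, etc.), or more precisely when $t[x:=t_l]$ itself is not $SN$ for a ``new'' reason.

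The crux is to invoke Lemma~\ref{1crux'}: taking $\tau = \es$, from $t \in SN$ and $t[x:=t_l] \notin SN$ we get some $\langle x, t_r \rangle \prec t$ and $\tau' \in \Sigma_{A,l}$ with $t_r[\tau'] \in SN$ and $\langle t_l, t_r[\tau'] \rangle \notin SN$. Now $t_l \in SN$ has type $A$, and $t_r[\tau']$ is in $SN$ and has type $A$ as well (since in a $c$-term $\langle t_l, t_r\rangle$ the two components share the cut type). Moreover $\eta c(t_r) < \eta c(t)$ because $t_r \prec t$ (by the remark following the definition of $\prec$, $\prec$ strictly decreases $\eta c$), and $\tau'$ is built from subterms strictly below $t$ in the $\prec$-order, so the relevant measure decreases. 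Thus the induction hypothesis applies to the pair $(t_l, t_r[\tau'])$ --- but here I must be careful: $\langle t_l, t_r[\tau']\rangle \notin SN$ is a statement about a $c$-term, not a substitution instance, so what I really need is a companion statement ``if $a_l, a_r \in SN$ have the same type $A$ then $\langle a_l, a_r\rangle \in SN$,'' proved simultaneously, exactly in the style of Theorem~\ref{1thm'1} but now using Lemma~\ref{1l30} (the typed analogue of Lemma~\ref{1l12}) and Lemma~\ref{1crux'} in place of Lemma~\ref{1crux}. That pairing lemma handles the four cases of Lemma~\ref{1l30} --- $t_l = \mu\al c$, $t_r = \mu x c$, $t_l = \l x u_l$ with $t_r = u_l'.u_r$, and the symmetric one --- the first two via Lemma~\ref{1crux'} and a $\prec$-decrease, and the last two via the induction hypothesis $H$ on smaller types (the logical redex $\langle u_l', \mu x \langle u_l, u_r\rangle\rangle$ involves a substitution whose substituted term has strictly smaller type than $type(t_r)$).

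The main obstacle I anticipate is setting up the simultaneous induction cleanly: Lemma~\ref{1thm}, the pairing statement $\langle a_l, a_r\rangle \in SN$, and the hypothesis $H$ of Lemma~\ref{1crux'} are mutually entangled, and one must check that the measure (lexicographic on $(n, \eta(a_l) + \eta(a_r), cxty(a_l) + cxty(a_r))$ or similar, with $n = lg(A)$ as the outer parameter) genuinely decreases at every appeal --- in particular that when Lemma~\ref{1crux'} hands back $t_r[\tau']$, its type is still $A$ (so $n$ does not increase) while $\eta c$ has dropped, and that when we peel off a logical redex of the form $\langle u_l', \mu x \langle u_l, u_r\rangle\rangle$ the substitution $[x := \cdot]$ it generates lowers $n$, so that $H$ (not the current level) does the work. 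Once the bookkeeping is arranged, each individual case is routine, mirroring the untyped proof of Theorem~\ref{1thm'1} with the extra typed cases absorbed by $H$.
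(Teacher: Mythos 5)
Your overall architecture is the right one and matches the paper's: an outermost induction on $lg(A)$ so that the hypothesis $H$ of Lemma~\ref{1crux'} is available, an inner argument mirroring the untyped case via Lemmas~\ref{1l30} and~\ref{1crux'}, and a type-decrease argument (via $H$) for the new $\l$/$\overline{\l}$ cases. But the crucial bookkeeping step, which is precisely the non-trivial content of this lemma, fails as you state it. When Lemma~\ref{1crux'} hands back $t_r[\tau']$, it is \emph{not} true that ``$\eta c$ has dropped'': only the core $t_r$ satisfies $\eta c(t_r)<\eta c(t)$, while $\tau'$ substitutes for its variables terms built from (reducts of) subterms of $t$ and of the previously substituted terms, so $\eta(t_r[\tau'])$ and $cxty(t_r[\tau'])$ can be arbitrarily larger than those of the original components. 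Consequently your companion statement ``if $a_l,a_r\in SN$ have the same type $A$ then $\langle a_l,a_r\rangle\in SN$'', measured by $(n,\eta(a_l)+\eta(a_r),cxty(a_l)+cxty(a_r))$ on the plain terms, cannot carry the induction: at the very appeal to Lemma~\ref{1crux'} the measure does not decrease. You gesture at ``the style of Theorem~\ref{1thm'1}'', which indeed contains the needed idea, but the generalization you actually wrote down (and the one you started to write and then abandoned with ``wait, that is trivially what we assumed'') drops it.

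The missing idea is to build the substitution instances into the statement itself and to measure the \emph{cores}: the paper proves that $t[x_1:=t_1[\tau_1],\ldots,x_k:=t_k[\tau_k]]\in SN$ whenever each $t_i$ has type $A$, $\tau_i\in\Sigma_{A,r}$ and $t_i[\tau_i]\in SN$, by induction on $(lg(A),\eta(t),cxty(t),\Sigma\,\eta(t_i),\Sigma\,cxty(t_i))$, counting each occurrence of a substituted variable. Then the problematic pair returned by Lemma~\ref{1crux'}, say $\langle v_l[\tau'_i],u_r[\sigma]\rangle$, is re-expressed as $\langle y,u_r\rangle[\sigma+[y:=v_l[\tau'_i]]]$ with $y$ fresh: the first three components of the measure are unchanged (the term is $t$ up to renaming one occurrence of $x_i$), and the decrease takes place in $\Sigma\,\eta(t_i),\Sigma\,cxty(t_i)$ because the core $t_i$ at that occurrence is replaced by the strictly smaller core $v_l$. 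With this formulation no separate ``pairing'' statement is needed either: $\langle t_l,t_r\rangle=\langle x,\al\rangle[x:=t_l][\al:=t_r]$, so the closure under pairing (Theorem~\ref{1thm1}) is a corollary of the substitution lemma rather than a mutually inductive companion. Without this strengthening (or an equivalent device tracking core and substitution separately), the step you flag as ``bookkeeping'' is exactly where the proof breaks.
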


\begin{proof}
We prove something a bit more general: let $A$ be a type and $t$ a
term.

\noindent (1) Let $t_1, ..., t_k$ be $l$-terms and $\tau_1, ...,
\tau_k$ be substitutions in $\Sigma_{A,r}$. If, for each $i$,
$t_i$ has type $A$ and $t_i[\tau_i] \in SN$, then
$t[x_1:=t_1[\tau_1], \ptv x_k:=t_k[\tau_k]] \in SN$.

\noindent (2) Let $t_1, ..., t_k$ be  $r$-terms and $\tau_1, ...,
\tau_k$ be substitutions in $\Sigma_{A,l}$. If, for each $i$,
$t_i$ has type $A$ and $t_i[\tau_i] \in SN$, then
$t[\al_1:=t_1[\tau_1], \ptv \al_k:=t_k[\tau_k]] \in SN$.

We only consider (1) since (2) is similar. This is proved by
induction on $(lg(A),$  $ \eta(t), cxty(t)$, $\Sigma \; \eta(t_i),
\Sigma \; cxty(t_i))$ where, in $\Sigma \; \eta(t_i)$ and $\Sigma
\; cxty(t_i)$, we count each occurrence of the substituted
variable. For example if $k=1$ and $x_1$ has $n$ occurrences,
$\Sigma \; \eta(t_i)=n.\eta(t_1)$.

The only no trivial case is $t = \langle u_l,u_r \rangle$. Let
$\sigma = [x_1:=t_1[\tau_1], \ptv x_k:=t_k[\tau_k]]$. By the
induction hypothesis, $u_l[\sigma], u_r[\sigma] \in SN$. By lemma
\ref{1l30}, there are four cases to consider.

\begin{itemize}
\item $u_l[\sigma] = \m \al c$ and $c[\al := u_r[\sigma]] \not \in SN$.

\begin{itemize}
  \item If $u_l = \m \al d$ and $d[\sigma] = c$. Then $d[\al := u_r][\sigma] \not \in SN$
   and, since $\eta(d[\al := u_r]) < \eta(t)$, this contradicts the
   induction hypothesis.

  \item If $u_l = x_i$, $t_i = \m \al d$ and $d[\t_i][\al := u_r[\sigma]] \not
   \in SN$. By lemma \ref{1crux'}, there is $v_l \peq d$ and $\t'_i \in
   \Sigma_{A,r}$ such that $v_l[\t'_i] \in SN$ and $\langle v_l[\t'_i],
   u_r[\sigma] \rangle \not \in SN$. Let $t' = \langle y , u_r \rangle$ where
   $y$ is a fresh variable and $\sigma' = \sigma + [y =
   v_l[\t'_i]]$. Then  $\langle v_l[\t'_i],
   u_r[\sigma] \rangle = t'[\sigma']$ and, since
   $(\eta(v_l),cxty(v_l)) < (\eta(t_i),cxty(t_i))$ we get a contradiction
   from the induction hypothesis.

\end{itemize}

\item $u_r[\sigma] = \m x c$ and $c[x :=u_l[\sigma]] \not \in SN$,
  then $u_r = \m x d$, $d[\sigma] = c$ and $d[x := u_l][\sigma] \not
  \in SN$. Since $\eta(d[x := u_l]) < \eta(t)$, this contradicts the
  induction hypothesis.
 \item $u_l[\sigma] = \l x v_l$, $u_r[\sigma] = v'_l.v_r$ and $\langle
  v'_l, \mu x \langle v_l , v_r \rangle \rangle \not \in SN$, then
  $u_r = w'_l.w_r$, $w'_l[\sigma] = v'_l$ and $w_r[\sigma] = v_r$.

\begin{itemize}
  \item If $u_l = \l x w_l$ and $w_l[\sigma]=v_l$. Then $\langle w'_l, \mu x
  \langle w_l , w_r \rangle \rangle [\sigma] \not \in SN$ and this contradicts the induction hypothesis, since $\eta(\langle w'_l, \mu x \langle w_l , w_r \rangle \rangle) <
  \eta(t)$.
  \item If $u_l = x_i$, $t_i = \l x w_l$ and $\langle w'_l [\sigma], \mu x
  \langle w_l[\t_i] , $  $w_r [\sigma] \rangle \rangle \not \in SN$. Then, $\langle w_l[\t_i], w_r[\sigma]
  \rangle = \langle y , u_r[\sigma] \rangle [y:=w_l[\t_i]]$
  where
  $y$ is a fresh variable and thus $\langle w_l[\t_i] , w_r
  [\sigma] \rangle \in SN$, since
  $lg(type( w_l[\t_i])) < lg(A)$.\\ Since $\langle w'_l [\sigma], \mu x \langle w_l[\t_i] , w_r [\sigma]
  \rangle \rangle = \langle z, \mu x \langle w_l[\t_i] , w_r [\sigma]
  \rangle \rangle [z := w'_l [\sigma]]$ where $z$ is a fresh variable and
$lg(type( w'_l [\sigma]))
  < lg(A)$, this contradicts the induction hypothesis.

\end{itemize}

\item $u_r[\sigma] = \l \al v_r$, $u_l[\sigma] = v'_r.v_l$ and
  $\langle \mu \al \langle v_l, v_r\rangle , v'_r\rangle \not \in
  SN$.  This  is proved in the same way.\qed
\end{itemize}
\end{proof}

\begin{theorem}\label{1thm1}
Every typed term is in $SN$.
\end{theorem}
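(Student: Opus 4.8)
The plan is to derive strong normalization of every typed term for the full $\overline{\lambda}\mu\tilde{\mu}$-calculus directly from Lemma \ref{1thm}, which already handles the case of $l$-substitutions and $r$-substitutions into $SN$ terms. By Theorem \ref{1post}, it is enough to prove that every typed term is in $SN$ for the $l$-reduction alone; the simplification rules $s_l, s_r$ can be postponed and are trivially terminating, so they add nothing. Thus I reduce the goal to: every typed term is strongly normalizing for $\tr_{\;l}$.

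The argument is then a straightforward structural induction on the term $t$. If $t$ is a variable ($x$ or $\al$), it is in normal form, hence in $SN$. If $t = \l x\, t_l$, $t = \l \al\, t_r$, $t = \mu \al\, c$, or $t = \mu x\, c$, then by the induction hypothesis the immediate subterm is in $SN$, and since no $l$-redex can be created at the root of a term of this shape (a reduction inside the subterm keeps the same outer constructor, and there is no root redex), we get $t \in SN$ — more precisely, any infinite reduction of $t$ would project to an infinite reduction of its proper subterm. The same remark applies to $t = t_r.t_l$ or $t = t_l.t_r$: the only redexes are inside the two components, so $t_l, t_r \in SN$ gives $t \in SN$. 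The only genuinely substantial case is $t = \langle t_l, t_r\rangle$: by the induction hypothesis $t_l, t_r \in SN$, and we must conclude $\langle t_l, t_r\rangle \in SN$. This is exactly what Lemma \ref{1thm} provides, taking the empty substitution $\sigma = \tau = \es$ (so that $t_l[\sigma] = t_l$ and $t_r[\tau] = t_r$) and applying it with $t$ replaced by a fresh pairing: indeed, instantiating Lemma \ref{1thm}(1) with $t := \langle z, t_r\rangle$ ($z$ fresh of the appropriate type $A = type(t_l)$), $k=1$, $t_1 := t_l$, $\tau_1 := \es$, we get $\langle z, t_r\rangle[z := t_l] = \langle t_l, t_r\rangle \in SN$, since $t_l$ has type $A$ and $t_l[\es] = t_l \in SN$.

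The main obstacle is not in this final theorem at all — the real work has already been done in Lemmas \ref{1crux'} and \ref{1thm}, where the type-indexed induction $(lg(A), \eta(t), cxty(t), \Sigma\,\eta(t_i), \Sigma\,cxty(t_i))$ and the hypothesis $H$ (``substituting a term of strictly smaller type preserves $SN$'') interact delicately. Here, in Theorem \ref{1thm1}, I expect no obstacle: it is a clean induction on term structure whose only non-trivial case is discharged by one invocation of Lemma \ref{1thm}. One small point to be careful about is making sure the reduction-projection claim for the non-pairing constructors is genuinely redex-free at the root; this is immediate from inspection of the six reduction rules, all of whose left-hand sides are of the form $\langle \cdots \rangle$ or (for $s_l, s_r$) $\mu\al\langle\cdots\rangle$, $\mu x\langle\cdots\rangle$ — and the latter two are excluded once we restrict to $\tr_{\;l}$ via Theorem \ref{1post}.
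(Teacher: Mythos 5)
Your proposal is correct and takes essentially the same route as the paper: a structural induction on the term in which every non-pairing constructor is handled by the absence of root $l$-redexes, and the pairing case is discharged by Lemma \ref{1thm} (the paper writes $\langle t_l , t_r \rangle = \langle x , \al \rangle[x:=t_l][\al:=t_r]$ and applies the lemma twice, which differs only cosmetically from your single instantiation with a fresh $z$). The one detail worth adding is that your instantiation of Lemma \ref{1thm}(1) with $t:=\langle z , t_r\rangle$ also needs $\langle z , t_r\rangle \in SN$ (the induction there is on a tuple containing $\eta(t)$, so $t\in SN$ is an implicit hypothesis); this follows from $t_r \in SN$ by Lemma \ref{1tri}, or is avoided altogether by the paper's two-step substitution into the normal form $\langle x , \al \rangle$.
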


\begin{proof}
 By induction on the term. It is enough to show that if $t_l,t_r \in
 SN$, then $\langle t_l , t_r \rangle \in SN$. Since $\langle t_l ,
 t_r \rangle= \langle x , \al \rangle[x:=t_l][\al:=t_r]$ where $x,\al$
 are fresh variables, the result follows from lemma
\ref{1thm}.
\end{proof}

\section{Normalization for the symmetric $\l\m$-calculus }\label{s2}

\subsection{The  $\mu\mu'$-reduction is strongly normalizing}\label{mm'}

In this section we consider the $\m\m'$-reduction, i.e. $M \tr M'$
means $M'$ is obtained from $M$ by one step of the
$\m\m'$-reduction. The proof of theorem \ref{thm1} is essentially
the same as the one of theorem \ref{1thm'1}. We first show (cf.
lemma \ref{l7}) that a $\m$ or $\m'$-substitution cannot create a
$\m$ and then we show (cf. lemma \ref{crux})  that, if $M[\sig]
\in SN$ but $M[\sig][\al=_rP] \not\in SN$, then the real cause of
non $SN$ is, in some  sense, $[\al=_rP]$. The main point is again
that, in a reduction of $(M \; N) \in SN$, there is, in fact, no
deep interactions between $M$ and $N$ i.e. in a reduct of $(M \;
N)$ we always know what is coming from $M$ and what is coming from
$N$.

\begin{definition}\rm
\begin{itemize}
  \item The set of simultaneous substitutions of
  the form $[\al_1=_{s_1}P_1  \ptv$  $ \al_n=_{s_n}P_n]$ where $s_i \in \{l,r\}$ will be denoted by
  $\Sigma$.
  \item  For  $s \in \{l,r\}$, the set of simultaneous substitutions of
  the form $[\al_1=_sP_1 $ ...$\al_n=_sP_n]$  will be denoted by
  $\Sigma_s$.
  \item If $\sig=[\al_1=_{s_1}P_1  \ptv$  $ \al_n=_{s_n}P_n]$, we denote by $dom(\sig)$
  (resp. $Im(\sig)$) the set $\{\al_1, \ptv \al_n\}$
  (resp. $\{P_1, \ptv P_n\}$ ).
\item Let $\sig \in \Sigma$. We say that $\sigma \in SN$ iff for every
$N \in Im(\sig)$, $N \in SN$.
\item If $\overrightarrow{P}$ is a sequence $P_1,...,P_n$ of terms,  $(M \; \overrightarrow{P})$
  will denote $(M \; P_1 \pts P_n)$.
\end{itemize}


\end{definition}

\begin{lemma}\label{ll2}
If $(M \; N) \ras \m \al P$, then either $M \ras \m \al M_1$ and
$M_1[\al =_r N] \ras P$ or $N \ras \m \al N_1$ and $N_1[\al =_l M]
\ras P$.
\end{lemma}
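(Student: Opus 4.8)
Looking at Lemma \ref{ll2}, I need to prove: if $(M\;N) \ras \m\al P$, then either $M \ras \m\al M_1$ with $M_1[\al=_r N] \ras P$, or $N \ras \m\al N_1$ with $N_1[\al=_l M] \ras P$.

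\medskip

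The plan is to argue by induction on the length $lg((M\;N) \ras \m\al P)$ of the reduction. The base case, where $(M\;N)$ is already of the form $\m\al P$, is impossible since an application is never syntactically a $\mu$-abstraction, so the reduction has length at least one and I look at its first step $(M\;N) \tr Q \ras \m\al P$. The key structural fact, which is essentially lemma \ref{l7} referenced in the surrounding text (``a $\m$ or $\m'$-substitution cannot create a $\m$''), is that the head redex contracted first cannot itself turn the application into a $\mu$-abstraction unless it was a $\beta$, $\mu$ or $\mu'$ step at the root. So I would do a case analysis on this first step.

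\medskip

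The cases are: (i) the step is internal to $M$, i.e. $M \tr M'$ and $Q = (M'\;N)$; then by the induction hypothesis applied to $(M'\;N) \ras \m\al P$ I get either $M' \ras \m\al M_1$ with $M_1[\al=_r N]\ras P$ — and then $M \tr M' \ras \m\al M_1$ gives the first disjunct — or $N \ras \m\al N_1$ with $N_1[\al=_l M'] \ras P$, and since $M \tr M'$ implies $N_1[\al=_l M] \ras N_1[\al=_l M']$ (a $\mu'$-substitution is monotone: it only duplicates $M$ into argument positions, and reduction is a congruence), I get the second disjunct. (ii) The step is internal to $N$, symmetric, using monotonicity of the $\mu$-substitution $[\al=_r N]$ in $N$. (iii) The step is at the root: then either $M = \l x M_0$ and the step is $\beta$, producing $M_0[x:=N]$ — but then I continue: actually this does not immediately give a $\mu$ at the root, so I'd again invoke the induction hypothesis structure; more cleanly, the root step is $\mu$ (so $M \tr^* \m\al M_1$, in fact $M = \m\al M_0$ and $Q = \m\al M_0[\al=_r N]$, giving the first disjunct directly with $M_1 = M_0$ and then $M_0[\al=_r N] \ras P$) or $\mu'$ (so $N = \m\al N_0$, giving the second disjunct directly), or the root step is $\beta$ with $M = \l x M_0$, and then I apply the induction hypothesis to $M_0[x:=N] \ras \m\al P$, which is a term, not an application — so this sub-case needs the companion statement that if a \emph{term} $R \ras \m\al P$ then... hmm, that is vacuous. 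Let me reconsider: if $M_0[x:=N] \ras \m\al P$ then since $M_0[x:=N] = (M\;N)$ reduces in fewer steps, but it is no longer an application, so the lemma as stated does not recurse.

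\medskip

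So the honest structure is: strengthen nothing, but in the root-$\beta$ case observe that $(M\;N) \tr M_0[x:=N] \ras \m\al P$ cannot occur \emph{with a $\beta$ root step} and still have $M$ of the right shape for a clean disjunct — rather, I should note that this case simply does not arise under the hypothesis, OR handle it by noting $M = \l x M_0 \ras \m\al M_1$ is false (a $\lambda$-abstraction never reduces to a $\mu$-abstraction, again by lemma \ref{l7}-type reasoning on root shape), so we must be in a situation where $N \ras \m\al N_1$: but that is not forced either. The cleanest fix, and what I expect the authors do, is to prove the slightly more general statement allowing a sequence of arguments, $(M\;\overrightarrow{P}) \ras \m\al Q$, defined in the notation block precisely for this purpose; then the root-$\beta$ case $(\l x M_0\; N\;\overrightarrow{P}) \tr (M_0[x:=N]\;\overrightarrow{P})$ recurses properly. \textbf{The main obstacle} is therefore exactly this: handling the creation of a root $\mu$ \emph{after} a head $\beta$-reduction, which requires either the generalized multi-argument formulation or an auxiliary lemma (lemma \ref{l7}) stating that $\beta$-, $\mu$-, $\mu'$-substitutions do not create new head $\mu$'s; modulo that, every case reduces to the induction hypothesis plus monotonicity of $\mu$- and $\mu'$-substitutions under reduction, which are routine congruence arguments.
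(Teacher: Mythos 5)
Your induction on the length of the reduction $(M\;N) \ras \m\al P$ is exactly the paper's proof (the paper disposes of the lemma with precisely that one-line argument), and your cases (i), (ii) and the root $\mu$/$\mu'$ cases are handled correctly, including the monotonicity observation that $M \tr M'$ implies $N_1[\al =_l M] \ras N_1[\al =_l M']$. However, the ``main obstacle'' you end on does not exist for this statement: Lemma \ref{ll2} sits in the section where $\tr$ denotes one step of the $\mu\mu'$-reduction \emph{only} ($\beta$, and also $\rho$ and $\theta$, are excluded there), so the root-$\beta$ case never arises and no multi-argument generalization is needed. The statement you were implicitly proving is the later Lemma \ref{lammu}(2), formulated for the $\be\m\m'$-reduction, which accordingly carries the extra disjunct $M \ras \la y M_1$ and $M_1[y:=N] \ras \m\al P$; the genuinely delicate interactions created by $\beta$-substitutions are then treated in Lemmas \ref{l8a} and \ref{l8b}, not here. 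Also beware that your suggested appeal to Lemma \ref{l7} would be circular: \ref{l7} is proved \emph{from} \ref{ll2}, and it concerns $\mu\mu'$-substitutions only, whereas a $\beta$-substitution can very well create a $\mu$ (this is exactly the new difficulty emphasized at the start of Section 4.2). Once the spurious $\beta$ case is struck out, your argument is complete and coincides with the paper's.
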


\begin{proof}
By induction  on the length of the reduction $(M \; N) \ras \m \al
P$.
\end{proof}

\begin{lemma}\label{l7}
 Let $M$ be a term and $\sigma \in \Sigma$. If
 $M[\sigma] \ras \m\al P$, then $M\ras \m\al Q$ for some $Q$ such that
 $Q[\sigma] \ras P$.
\end{lemma}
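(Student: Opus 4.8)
The plan is to prove Lemma \ref{l7} by induction on the structure of $M$ (or equivalently on $\mathit{cxty}(M)$), keeping $\sigma$ arbitrary, and to use Lemma \ref{ll2} to handle the one case where a $\mu$ could conceivably be produced at the root by applying $\sigma$ rather than being already present in $M$. The key observation behind the lemma is that a $\mu$- or $\mu'$-substitution never creates a $\mu$: it only replaces sub-terms of the form $(\al\;U)$ by $(\al\;(U\;N))$ or $(\al\;(N\;U))$, so the head constructor of any sub-term of $M[\sigma]$ is inherited from the corresponding sub-term of $M$, except that what was a variable $\al\in\mathrm{dom}(\sigma)$ in $M$ sits inside an application $(\al\;\cdots)$ in $M[\sigma]$ — which is still not a $\mu$ at that position.

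First I would set up the induction. If $M$ is a variable $x$, or $M = \lambda x\,M_0$, or $M = (\al\;M_0)$ with $\al\in\mathrm{dom}(\sigma)$ possibly, then $M[\sigma]$ does not begin with a $\mu$ and cannot reduce to a term beginning with $\mu$ (note that $\beta$, $\mu$ and $\mu'$ never turn a non-$\mu$-headed, non-redex term into a $\mu$-headed one at the root, and more carefully: a head $\lambda$ or a head $(\al\;\cdot)$ is preserved by the reductions we consider here, since we are in the $\mu\mu'$-calculus without $\rho,\theta$); so the hypothesis $M[\sigma]\ras \mu\al P$ is vacuous in these cases. If $M = \mu\be\,M_0$, then $M[\sigma] = \mu\be\,M_0[\sigma']$ where $\sigma'$ is $\sigma$ suitably renamed away from $\be$; a reduction $\mu\be\,M_0[\sigma']\ras\mu\al P$ must, since the head $\mu$ is never destroyed, have $\al=\be$ (up to renaming) and $M_0[\sigma']\ras P$, and then $M\ras M$ with $M = \mu\al\,M_0$ works by taking $Q = M_0$ after inverting the inner reduction via a sub-induction on reduction length — or more simply, I note that we can directly track the reduction $M_0[\sigma']\ras P$ and appeal to the reasoning below. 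Actually the cleanest organisation is to also induct on the length of the reduction $M[\sigma]\ras\mu\al P$: if the length is $0$ then $M[\sigma] = \mu\al P$, forcing $M = \mu\al Q$ with $Q[\sigma]=P$ (using that the image of $\sigma$ can only be substituted into the $(\be\;\cdot)$ positions, so the outermost $\mu$ of $M[\sigma]$ is the outermost $\mu$ of $M$); otherwise factor the reduction as $M[\sigma]\tr R\ras\mu\al P$.

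The heart of the argument is the factored case $M[\sigma]\tr R\ras\mu\al P$. I would analyse which redex of $M[\sigma]$ is contracted in the first step. If the redex lies entirely within a copy of some $N_i\in\mathrm{Im}(\sigma)$, or the contracted redex corresponds to a redex already present in $M$ (so $M\tr M'$ and $R = M'[\sigma'']$ for an appropriate $\sigma''$), then I apply the induction hypothesis on reduction length (and on $M'$) to get $M'\ras\mu\al Q$ with $Q[\sigma'']\ras P$, whence $M\tr M'\ras\mu\al Q$. The genuinely new possibility is that the first contracted redex is a $\mu$- or $\mu'$-redex created by the substitution — i.e. $M$ has a sub-term $(\al_i\;U)$ with $\al_i\in\mathrm{dom}(\sigma)$, and in $M[\sigma]$ this became $(\al_i\;(U[\sigma]\;N_i))$ which is now a redex if $U[\sigma]$ reduces to $\mu\cdots$, or — the really delicate subcase — $M$ has a sub-term $(U\;V)$ where $V[\sigma]$ reduces to a $\mu$-term because $V$ is (or reduces to) a variable $\al_j\in\mathrm{dom}(\sigma)$ with $N_j$ a $\mu$-term, creating a new $\mu'$-redex. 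This is exactly where Lemma \ref{ll2} enters: it tells us that in $(M_1\;N_1)\ras\mu\al P$ the $\mu$ must come from one side or the other, and combined with the structural fact that $\sigma$ only decorates $(\al\;\cdot)$-positions, it lets us pull the $\mu$ back to a sub-term of $M$ itself.

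I expect the main obstacle to be precisely this bookkeeping: after applying $\sigma$, new applications appear at $(\al\;\cdot)$-positions, and a single reduction step of $M[\sigma]$ may mix material from $M$ with material from $\mathrm{Im}(\sigma)$, so one has to argue carefully that the commuting reduct $R$ is again of the form (reduct of $M$)$[$reduct of $\sigma]$ — i.e., that reductions of $M[\sigma]$ project to reductions of $M$ together with reductions inside $\sigma$, never creating a $\mu$ that is not already witnessed inside $M$. The right lemma to lean on is Lemma \ref{ll2}; with it, each subcase of the redex analysis closes by the induction hypothesis, and no subcase requires genuinely new ideas, only care in defining the residual substitution $\sigma''$ and checking the induction measure (reduction length, then $\mathit{cxty}(M)$) strictly decreases.
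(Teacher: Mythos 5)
Your opening plan (structural induction on $M$, with Lemma \ref{ll2} handling the application case) is exactly the paper's proof, but you do not actually carry it out: you switch to what you call the ``cleanest organisation'' --- induction on the length of $M[\sig]\ras\m\al P$ with a step-by-step redex/residual analysis --- and there you leave precisely the crucial point (``the bookkeeping'') unresolved. That deferred bookkeeping is not routine: after one step of $M[\sig]$ the term need \emph{not} be of the form $M'[\sig']$ with $\sig'\in\Sigma$ again. For instance, if $M$ contains $(\al_i\; \m\be W)$ with $\sig(\al_i)=_r N_i$, then $M[\sig]$ contains the created redex $(\al_i\;(\m\be W[\sig]\; N_i))$, and contracting it removes the decoration $N_i$ at that single occurrence of $\al_i$ while all other occurrences keep it; no substitution in $\Sigma$ reproduces this, so your induction hypothesis (which quantifies over $\sig\in\Sigma$) cannot be applied to the reduct. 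Making this projection argument work would require enlarging the class of substitutions (in the spirit of the later lemmas of the paper), i.e.\ genuinely new machinery, so as written the heart of your proof does not close.

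The paper avoids all of this by staying with the structural induction and letting Lemma \ref{ll2} absorb the entire multi-step reduction at once: for $M=(M_1\;M_2)$, from $(M_1[\sig]\;M_2[\sig])\ras\m\al P$ Lemma \ref{ll2} gives (say) $M_1[\sig]\ras\m\al R$ with $R[\al=_rM_2[\sig]]\ras P$; the induction hypothesis applies to the \emph{subterm} $M_1$ with the \emph{same} $\sig$, yielding $M_1\ras\m\al Q$ with $Q[\sig]\ras R$; then $M\ras\m\al\,Q[\al=_rM_2]$, and the witness works because of the commutation $Q[\al=_rM_2][\sig]=Q[\sig][\al=_rM_2[\sig]]\ras R[\al=_rM_2[\sig]]\ras P$. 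This substitution-commutation identity, together with applying the induction hypothesis to $M_1$ (or $M_2$) rather than to a reduct of $M[\sig]$, is the missing step in your proposal; once you add it, the remaining cases (variable, $\l x$, $(\be\;M')$, head $\m$) are vacuous or immediate, exactly as you observed.
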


\begin{proof}
By induction on $M$. $M$ cannot be of the form $(\beta \, M')$ or
$\l x \, M'$. If $M$ begins with a $\m$, the result is trivial.
Otherwise $M=(M_1 \; M_2)$ and, by lemma \ref{ll2}, either
$M_1[\sigma] \ras \m\al R$ and $R[\al=_rM_2[\sigma]] \ras P$ or
$M_2[\sigma] \ras \m\al R$ and $R[\al=_lM_1[\sigma]] \ras P$. Look
at the first case (the other one is similar). By the induction
hypothesis $M_1\ras \m\al Q$ for some $Q$ such that $Q[\sigma]
\ras R$ and thus $M \ras \m\al Q[\al=_r M_2]$. Since $Q[\al=_r
M_2][\sigma] = Q[\sigma][\al=_r M_2[\sigma]] \ras R[\al=_r
M_2[\sigma]]\ras P$ we  are done.
\end{proof}

\begin{lemma}\label{l12}
Assume $M,N \in SN$ and $(M \; N) \not\in SN$. Then either $M \ras
\m \al M_1$ and $M_1[\al =_r N] \not\in SN$ or $N \ras \m \be N_1$
and $N_1[\be =_l M] \not\in SN$.
\end{lemma}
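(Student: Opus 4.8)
The plan is to follow the proof of Lemma~\ref{1l12} almost verbatim, arguing by induction on $\eta(M)+\eta(N)$. Since $(M\;N)\notin SN$, there is a one-step $\m\m'$-reduct $T$ of $(M\;N)$ with $T\notin SN$, and the redex contracted to obtain $T$ occurs either strictly inside $M$, or strictly inside $N$, or is the whole application $(M\;N)$ itself. These three cases are what I would treat.

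If $T=(M'\;N)$ with $M\tr M'$, then $M'\in SN$ (as $M\in SN$), $(M'\;N)\notin SN$ and $\eta(M')<\eta(M)$, so the induction hypothesis applies to $M'$ and $N$: either $M'\ras\m\al M_1$ with $M_1[\al=_r N]\notin SN$, in which case $M\tr M'\ras\m\al M_1$ yields $M\ras\m\al M_1$ and we are done; or $N\ras\m\be N_1$ with $N_1[\be=_l M]\notin SN$, which is already the desired conclusion. The case $T=(M\;N')$ with $N\tr N'$ is symmetric.

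If the contracted redex is the whole term $(M\;N)$, then either $M=\m\al M_0$ and $T=\m\al M_0[\al=_r N]$, or $N=\m\be N_0$ and $T=\m\be N_0[\be=_l M]$. Observe that, for the $\m\m'$-reduction, $\m\g P\in SN$ iff $P\in SN$, since a term starting with $\m$ is not an application and hence cannot be part of a $\m$- or $\m'$-redex at its root; so $\m\g P\tr Q$ forces $Q=\m\g P'$ with $P\tr P'$. Therefore $T\notin SN$ gives $M_0[\al=_r N]\notin SN$ (resp. $N_0[\be=_l M]\notin SN$); taking $M_1=M_0$ (resp. $N_1=N_0$) and using the reflexive part of $\ras$ gives the statement.

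I do not expect a genuine obstacle in this lemma: the one point to keep in mind is that $\beta$ is not among the rules considered in this section, so no head $\beta$-redex can occur, and the only root redexes of $(M\;N)$ are the two displayed $\m$- and $\m'$-patterns. The real difficulty announced in the introduction — that a $\m$ can be created at the root of a term by a reduction — does not affect this lemma; it will surface in Lemma~\ref{crux}, where the $\m$-substitution has to be traced through such created redexes.
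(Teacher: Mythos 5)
Your proof is correct and follows essentially the same route as the paper: induction on $\eta(M)+\eta(N)$ with a case analysis on the first reduction step of $(M\;N)$, the root cases being immediate because a term starting with $\mu$ only reduces internally under the $\mu\mu'$-reduction. The only nuance (glossed equally in the paper) is that in the case $(M\;N)\tr(M'\;N)$ the induction hypothesis literally yields $N_1[\be=_l M']\not\in SN$ rather than $N_1[\be=_l M]\not\in SN$, which transfers since $N_1[\be=_l M]\ras N_1[\be=_l M']$.
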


\begin{proof}
By induction on $\eta(M)+\eta(N)$. Since $(M \; N) \not\in SN$,
$(M \; N) \tr P$ for some $P$ such that  $P \not \in SN$.  If $P =
(M' \; N)$ where $M \tr M'$ we conclude by the induction
hypothesis since $\eta(M')+\eta(N) < \eta(M)+\eta(N)$. If $P = (M
\; N')$ where $N \tr N'$ the proof is similar. If $M = \mu \al
M_1$  and $P = \mu \al M_1[\al=_rN]$ or  $N = \mu \be N_1$ and $P
= \mu \be N_1[\be=_lM]$
 the result is trivial.
\end{proof}

\begin{lemma}\label{crux}

Let $M$ be a term and $\sig \in \Sigma_s$. Assume $\delta$ is free in
$M$ but not free in $Im(\sig)$. If $M[\sigma] \in SN$ but
$M[\sigma][\delta=_sP] \not\in SN$, there is $M'\prec M$ and $\sigma'$
such that $M'[\sigma'] \in SN$ and, if $s=r$, $(M'[\sigma'] \;\; P)
\not\in SN$ and, if $s=l$, $(P \;\; M'[\sigma']) \not\in SN$.
\end{lemma}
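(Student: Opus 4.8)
The plan is to mimic, in the $\lambda\mu$ setting, the proof of Lemma~\ref{1crux}: fix the problematic substitution $[\delta =_s P]$, collect all ``relevant'' subterms, define an inductive class of auxiliary substitutions built from those subterms, and then prove a strengthened statement by a simultaneous induction on $\eta c$ of the substituted term. I will treat the case $s=r$ (the case $s=l$ is symmetric, exchanging left and right arguments of applications). First I would note that $P$ must be such that the failure is genuinely caused by $[\delta=_rP]$: since $M[\sigma]\in SN$ but $M[\sigma][\delta=_rP]\not\in SN$, the variable $\delta$ really occurs in (a reduct of) $M[\sigma]$ in an applied position. Let $Im(\sigma)=\{P_1,\dots,P_k\}$, set $\mathcal U=\{u\ /\ u\preceq M\}$ and $\mathcal V=\{v\ /\ v\preceq P_i \text{ for some } i\}$, and define inductively two sets $\Sigma'_r,\Sigma'_l$ of substitutions exactly as in Lemma~\ref{1crux}: a substitution of $\Sigma'_r$ assigns to its variables terms of the form $u[\rho]$ with $u\in\mathcal U$ and $\rho\in\Sigma'_l$, and dually for $\Sigma'_l$ using $\mathcal V$ and $\Sigma'_r$.

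Denote by C the conclusion of the lemma (there is $M'\prec M$ and $\sigma'$ with $M'[\sigma']\in SN$ and $(M'[\sigma']\;P)\not\in SN$). The strengthened statement I would prove is: (1) if $u\in\mathcal U$, $\rho\in\Sigma'_r$ [appropriately indexed so that $\delta$ is still free where needed], $u[\rho]\in SN$ and $u[\rho][\delta=_rP]\not\in SN$, then C holds; (2) the analogous statement for $v\in\mathcal V$ and $\delta\in\Sigma'_l$. These are proved by simultaneous induction on $\eta c(u[\rho])$ (resp. $\eta c(v[\delta])$). The case analysis on the shape of $u$ follows Lemma~\ref{l7} and Lemma~\ref{l12}: if $u$ begins with a $\mu$, or $u=\lambda x\,u'$, or $u=(\beta\,u')$, the result follows immediately from the induction hypothesis (using Lemma~\ref{l7} to see that a $\mu$-substitution cannot create a $\mu$ at the root). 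The interesting case is $u=(u_1\;u_2)$: one splits according to whether the non-$SN$ comes from $u_1[\rho][\delta=_rP]$, from $u_2[\rho][\delta=_rP]$, or from the head redex of $(u_1\;u_2)[\rho][\delta=_rP]$; in the last case Lemma~\ref{l12} tells us that either $u_1[\rho][\delta=_rP]\ras\mu\al M_1$ with $M_1[\al=_r u_2[\rho][\delta=_rP]]\not\in SN$, or symmetrically with $u_2$; and Lemma~\ref{l7} traces the $\mu$ back through $\rho$ and $[\delta=_rP]$.

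The subcase that must be handled with care, and which I expect to be the main obstacle, is the one where the head of $(u_1\;u_2)[\rho][\delta=_rP]$ becomes a $\mu$ precisely because $u_1$ (or $u_2$) is the distinguished variable $\delta$: then $u_1=\delta$, so $u_1[\rho][\delta=_rP]$ is not just $P$ but the application $(P\;\dots)$ built by the $\mu$-substitution $[\delta=_rP]$ acting on reducts, and we must exhibit $M'$ and $\sigma'$ realising C. This is where the choice $M'=(\delta\;u_2)\prec M$ (so that $\delta$ occurs applied), $\sigma'=\rho[\delta=_rP]$, should give $(M'[\sigma'])=$ essentially $(P\;u_2[\rho][\delta=_rP])$, which is not in $SN$; one has to check that $u_2[\rho][\delta=_rP]\in SN$ (it is, being a strict sub-reduct of something smaller, or by the induction hypothesis) and that $M'[\sigma']\in SN$ would force the whole term into $SN$, contradiction. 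The other delicate points are bookkeeping: verifying $M'\prec M$ (using that $\preceq$ is transitive and stable under the reductions considered, Remark after Definition of $\prec$), and checking that each appeal to the induction hypothesis strictly decreases $\eta c(u[\rho])$ — which holds because every rewriting step performed (a $\mu$-substitution pushed inside, a head reduction consumed) either shortens the longest reduction or, at equal $\eta$, shrinks $cxty$. Once (1) and (2) are established, C follows from (1) applied to $M$ itself with the empty auxiliary substitution, since $M[\sigma][\delta=_rP]\not\in SN$ forces $M$ to be proper.
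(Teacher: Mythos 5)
Your overall plan does mirror the paper's proof (the sets ${\cal U},{\cal V}$, mutually defined classes of auxiliary substitutions, the strengthened claims (1)–(2), and the simultaneous induction on $\eta c$), but the two cases on which the lemma actually hinges are wrong or missing. First, you misread the syntax at the crucial point: in the symmetric $\lambda\mu$-calculus $\delta$ is a $\mu$-variable, so it never occurs as the left component of an application $(u_1\;u_2)$; it occurs only in the naming construct $(\delta\;U_1)$, and $[\delta=_rP]$ does not replace $\delta$ by $P$ but rewrites each subterm $(\delta\;U)$ into $(\delta\;(U\;P))$. Hence the case in which the conclusion is realized is $U=(\delta\;U_1)$ with $U_1[\rho][\delta=_rP]\in SN$: one takes $M'=U_1$ and $\sigma'=\rho[\delta=_rP]$, so that $M'[\sigma']\in SN$ while $(M'[\sigma']\;P)\not\in SN$, because $U[\rho][\delta=_rP]=(\delta\;(U_1[\rho][\delta=_rP]\;P))\not\in SN$. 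Your choice $M'=(\delta\;u_2)$, with $M'[\sigma']$ ``essentially $(P\;u_2[\rho][\delta=_rP])$, which is not in $SN$'', cannot satisfy the statement, which requires $M'[\sigma']\in SN$ (and for $s=r$ the term $P$ must appear on the right of the application, not the left). So the witnessing case of the lemma is not established in your proposal.

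Second, you dismiss $u=(\beta\;u')$ as ``immediate from the induction hypothesis''. When $\beta\in dom(\rho)$, say $\rho(\beta)=V[\tau]$, this is in fact the delicate case and the only place where the mutual induction is used: then $u[\rho]=(\beta\;(u'[\rho]\;V[\tau]))$, and if neither $u'[\rho][\delta=_rP]$ nor $V[\tau][\delta=_rP]$ fails to be in $SN$, Lemmas \ref{l7} and \ref{l12} split the failure into either $u'\ras\mu\alpha_1 u''$ with $u''[\rho'][\delta=_rP]\not\in SN$ where $\rho'=\rho+[\alpha_1=_rV[\tau]]$, or $V\ras\mu\beta_1 V_1$ with $V_1[\tau'][\delta=_rP]\not\in SN$ where $\tau'=\tau+[\beta_1=_lu'[\rho]]$; one must then extend $\rho$ or $\tau$ and switch between claims (1) and (2). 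Without this case your sets ${\cal V}$, the class of substitutions with images $U[\rho]$, and claim (2) are never invoked, so the simultaneous induction does not actually carry the argument. Relatedly, the auxiliary substitutions here must be $\mu/\mu'$-substitutions of the forms $[\beta=_rV[\tau]]$ and $[\alpha=_lU[\rho]]$, not ordinary variable substitutions ``exactly as in Lemma \ref{1crux}'': only the $\mu\mu'$-reduction is considered in this section, and no $\lambda$-variables are substituted.
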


\begin{proof}
Assume $s=r$ (the other case is similar). Let  $Im(\sigma)= \{N_1,
\ptv N_k\} $. Assume $M,\delta, \sigma, P$ satisfy the hypothesis.
Let ${\cal U}=\{U \; / \; U \peq M\}$ and ${\cal V}=\{V \; / \; V
\peq N_i$ for some $i\}$. Define inductively the sets $\Sigma_m$
and $\Sigma_n$ of substitutions by the following rules:

$\rho \in \Sigma_m$ iff $ \rho = \es$ or $\rho=\rho' + [\be =_r
V[\tau]]$ for some $V \in {\cal V} $, $\tau  \in \Sigma_n$ and
$\rho' \in \Sigma_m$

$\tau \in \Sigma_n$ iff $ \tau = \es$ or $\tau=\tau' + [\al =_l
U[\rho]]$ for some $U \in {\cal U}$, $\rho  \in \Sigma_m$ and
$\tau' \in \Sigma_n$

\noindent Denote by C the conclusion of the lemma, i.e. there is
$M'\prec M$ and $\sigma'$ such that
 $M'[\sigma'] \in SN$, and
 $(M'[\sigma'] \;\; P) \not\in
  SN$.

\noindent We prove something more general.

\noindent (1) Let $U \in {\cal U}$ and $\rho \in \Sigma_m$. Assume
$U[\rho]\in SN$ and $U[\rho][\delta=_rP]\not\in SN$. Then, C
holds.

\noindent (2) Let $V \in {\cal V}$ and $\tau \in \Sigma_n$. Assume
$V[\tau]\in SN$ and $V[\tau][\delta=_rP]\not\in SN$. Then, C
holds.

The conclusion C follows from (1) with $M$ and $\sig$. The properties
(1) and (2) are proved by a simultaneous induction on $\eta
c(U[\rho])$ (for the first case) and $\eta c(V[\tau])$ (for the second
case).

\medskip

Look first at (1)

\noindent - if $U =\l x U'$ or $U=\m\al U'$:  the result follows
 from the induction hypothesis with $U'$ and $\rho$.

\noindent - if $U= (U_1 \; U_2)$: if $U_i[\rho][\delta=_rP]\not\in
SN$ for $i=1$ or $i=2$, the result follows
 from the induction hypothesis with $U_i$ and $\rho$. Otherwise, by lemma \ref{l7} and
 \ref{l12},
 say $U_1 \ras \m\al U'_1$ and, letting $U'=U'_1[\al=_rU_2]$, $U'[\rho][\delta=_rP]\not\in SN$
 and the result follows
 from the induction hypothesis with $U'$ and $\rho$.

\noindent - if $U=(\delta \; U_1)$: if $U_1[\rho][\delta=_rP] \in
SN$, then $M'=U_1$ and $\sig'=\rho[\delta=_rP]$ satisfy the
desired conclusion. Otherwise, the result follows
 from the induction hypothesis with $U_1$ and $\rho$.

\noindent - if $U=(\al \; U_1)$: if $\al \not\in dom(\rho)$ or
$U_1[\rho][\delta=_rP] \not\in SN$, the result follows
 from the induction hypothesis with $U_1$ and $\rho$.  Otherwise,  let
 $\rho(\al)=V[\tau]$. If $V[\tau][\delta=_rP] \not\in SN$, the
 result follows
 from the induction hypothesis with $V$ and $\tau$ (with (2)). Otherwise, by lemmas \ref{l7} and
 \ref{l12}, there are two cases to consider.

 - $U_1 \ras \m\al_1 U_2$ and $U_2[\rho'][\delta=_rP] \not\in SN$
 where $\rho' =\rho + [\al_1=_rV[\tau]]$. The result follows from the
 induction hypothesis with $U_2$ and $\rho'$.

 - $V \ras \m\be V_1$ and $V_1[\tau'][\delta=_rP] \not\in SN$ where $\tau'=\tau+[\be=_lU_1[\rho]]$. The result follows
 from the induction hypothesis with $V_1$ and $\tau'$ (with (2)).

\medskip

The case (2) is proved in the same way. Note that, since $\delta$
is not free in the $N_i$, the case $b=(\delta \; V_1)$ does not
appear.
\end{proof}

\begin{theorem}\label{thm1}
Every term is in $SN$.
\end{theorem}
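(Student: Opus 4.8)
The plan is to follow the proof of Theorem~\ref{1thm'1}, with Lemmas~\ref{l7}, \ref{l12} and~\ref{crux} playing the roles of Lemmas~\ref{1l12} and~\ref{1crux}. I argue by induction on the term. If the term is a variable, $\l x\,M$, $\m\al\,M$ or $(\al\;M)$, then, since only the $\m\m'$-reduction is considered, it has no redex at its root, so every one-step reduct of it is obtained by reducing an immediate subterm; these subterms are in $SN$ by the induction hypothesis, hence so is the whole term. The only genuine case is that of an application: assuming $M,N\in SN$, I must show $(M\;N)\in SN$.

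As in Theorem~\ref{1thm'1}, I prove a stronger statement, of which the case $\sig=\t=\es$ is what is needed: for every $\sig\in\Sigma_r$ and every $\t\in\Sigma_l$, if $M[\sig]\in SN$ and $N[\t]\in SN$ then $(M[\sig]\;N[\t])\in SN$. (Here $M[\sig]\in SN$ forces $M\in SN$, since any reduction of $M$ induces one of $M[\sig]$, and likewise for $N$.) Suppose this fails, and pick $M,N,\sig,\t$ witnessing the failure with $(\eta(M)+\eta(N),\;cxty(M)+cxty(N))$ minimal for the lexicographic order. Applying Lemma~\ref{l12} to $M[\sig]$ and $N[\t]$, either $M[\sig]\ras\m\al M_1$ with $M_1[\al=_rN[\t]]\not\in SN$, or, symmetrically, $N[\t]\ras\m\be N_1$ with $N_1[\be=_lM[\sig]]\not\in SN$.

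I treat the first alternative; the second is handled in the same way, now shrinking the right factor instead of the left and using $s=l$ in Lemma~\ref{crux}. By Lemma~\ref{l7} there is $Q$ with $M\ras\m\al Q$ and $Q[\sig]\ras M_1$. Then $\m\al\,(Q[\sig])$ is a reduct of $M[\sig]\in SN$, so $Q[\sig]\in SN$, while $Q[\sig][\al=_rN[\t]]\ras M_1[\al=_rN[\t]]\not\in SN$, using --- as in the proof of Lemma~\ref{l7} --- that a reduction $A\ras B$ induces a reduction $A[\al=_rC]\ras B[\al=_rC]$. Moreover $\al$ is free in $Q$ (otherwise $Q[\sig][\al=_rN[\t]]=Q[\sig]\in SN$) and may be assumed not free in $Im(\sig)$. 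Lemma~\ref{crux} with $s=r$ then provides $M'\prec Q$ and a substitution $\sig'$, which by inspection of its proof lies in $\Sigma_r$, such that $M'[\sig']\in SN$ and $(M'[\sig']\;N[\t])\not\in SN$. Since $M\ras\m\al Q$ and $Q<\m\al Q$, we have $Q\prec M$, hence $\eta c(M')<\eta c(Q)<\eta c(M)$; so the pair $(M',N)$ together with $\sig'$ and $\t$ contradicts the minimality of $(M,N)$.

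I expect no new difficulty. The one substantial ingredient is Lemma~\ref{crux}, which localises the cause of non strong normalization in the single substitution $[\al=_rN[\t]]$, and it is already proved --- that was precisely the delicate point of the development. The remaining checks are the routine bookkeeping already used above: that $\m$- and $\m'$-substitutions commute with one another and are compatible with the reduction, that erasing a substitution cannot turn a strongly normalizing term into a non strongly normalizing one, and that the substitution returned by Lemma~\ref{crux} stays in $\Sigma_r$ (resp.\ $\Sigma_l$), so that the chosen measure really decreases.
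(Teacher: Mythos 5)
Your proposal is correct and follows essentially the same route as the paper's proof: the same strengthened statement with substitutions in $\Sigma_r$ and $\Sigma_l$, a minimal counterexample for $(\eta(M)+\eta(N),cxty(M)+cxty(N))$, and the same chain of Lemmas \ref{l12}, \ref{l7} and \ref{crux} yielding $M'\prec M$ with $\eta c(M')<\eta c(M)$, contradicting minimality. The extra checks you make explicit (compatibility of reduction with $\m$-substitution, $\al$ free in $Q$, $\sig'\in\Sigma_r$) are exactly the routine points the paper leaves implicit.
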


\begin{proof}
By induction on the term. It is enough to show that, if $M,N \in
SN$, then $(M \; N) \in SN$. We prove something more general: let
$\sigma$ (resp. $\tau$) be in $\Sigma_r$ (resp. $\Sigma_l$) and
assume $M[\sigma],N[\tau] \in SN$. Then $(M[\sigma] \; N[\tau])
\in SN$. Assume it is not the case and choose some elements such
that $M[\sigma],N[\tau] \in SN$, $(M[\sigma] \; N[\tau]) \not\in
SN$ and $(\eta(M)+\eta(N), cxty(M)+cxty(N))$ is minimal. By lemma
\ref{l12}, either $M[\sigma] \ras \mu \delta M_1$ and $M_1[\delta
=_r N[\tau]] \not \in SN$ or $N[\tau] \ras \mu \be N_1$ and
$N_1[\be =_l M[\sigma]] \not \in SN$. Look at the first case (the
other one is similar). By lemma \ref{l7}, $M \ras \mu \delta M_2$
for some $M_2$ such that $M_2[\sigma] \ras M_1$. Thus,
$M_2[\sigma][\delta =_r N[\tau]] \not \in SN$. By lemma \ref{crux}
with $M_2, \sig$ and $N[\tau]$, let $M'\prec M_2$ and $\sig'$ be
such that $M'[\sig'] \in SN$, $(M'[\sig'] \; N[\tau]) \not\in SN$.
 This contradicts the
minimality of the chosen elements since $\eta c(M') < \eta c(M)$.
\end{proof}

\subsection{The simply typed symmetric
$\la\m$-calculus is strongly normalizing}\label{beta}

In this section, we consider the simply typed calculus with the
$\be\m\m'$-reduction i.e. $M \tr M'$ means $M'$ is obtained from
$M$ by one step of the $\be\m\m'$-reduction. The strong
normalization of the $\be\m\m'$-reduction is proved essentially as
in theorem \ref{1thm1}.

 There is, however,  a new difficulty
:  a $\be$-substitution may create a $\m$, i.e. the fact that
$M[x:=N] \ras \mu \al P$ does not imply that $M \ras \mu \al Q$.
Moreover the $\m$ may come from a complicated interaction between
$M$ and $N$ and, in particular, the alternation between $M$ and
$N$ can be lost. Let e.g. $M=(M_1 \; (x \; (\la y_1 \la y_2 \m \al
M_4) \; M_2 \; M_3))$ and $N=\la z (z \; N_1)$. Then $M[x:=N]
\tr^* (M_1 \; (\m\al M'_4 \; M_3)) \ras \m\al M'_4[\al
=_rM_3][\al=_lM_1]$. To deal with this situation, we  need to
consider some new kind of $\m\m'$-substitutions (see definition
\ref{def}). Lemma \ref{l8b} gives the different ways in which a
$\m$ may appear. The difficult case in the proof (when a $\m$ is
created and the control between $M$ and $N$ is lost) will be
solved by using a typing argument.

To simplify the notations, we do not write explicitly the type
information but, when needed, we denote by $type(M)$ the type of
the term $M$.

\begin{lemma}\label{lammu}
\begin{enumerate}
\item If $(M \; N) \ras \l x P$, then $M \ras \l y M_1$ and
$M_1[y := N] \ras \l x P$.
\item If $(M \; N) \ras \m \al P$, then either ($M \ras \l y M_1$ and
$M_1[y := N] \ras \m \al P$) or ($M \ras \m \al M_1$ and $M_1[\al
=_r N] \ras P$) or ($N \ras \m \al N_1$ and $N_1[\al =_l M] \ras
P$).
\end{enumerate}
\end{lemma}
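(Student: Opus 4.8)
The plan is to prove both parts by induction on the length of the reduction $(M \; N) \ras \l x P$ (for part 1) or $(M \; N) \ras \m \al P$ (for part 2), following the pattern of Lemma \ref{ll2} but keeping track of the extra ways a head-constructor can arise once $\be$-reduction is in play. The base case, where the reduction has length zero, is vacuous since $(M \; N)$ is neither a $\l$-abstraction nor a $\m$-abstraction. So suppose $(M \; N) \tr R \ras \l x P$ (resp.\ $\ras \m \al P$) and analyse the first step $(M \; N) \tr R$.

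\medskip

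\noindent\emph{Part 1.} The first step is either internal to $M$ (giving $R = (M' \; N)$ with $M \tr M'$), internal to $N$ (giving $R = (M \; N')$), or a head reduction. A head reduction at the root of $(M \; N)$ requires $M$ to already begin with the appropriate constructor: if $M = \l y M_1$ then $R = M_1[y := N]$ and we conclude directly (no induction needed, taking the reduction $M_1[y:=N] \ras \l x P$ as given by $R \ras \l x P$); if $M = \m \al M_1$ then $R = \m \al M_1[\al =_r N]$, but then $R$ begins with $\m$, so $R \ras \l x P$ is impossible (a $\m$ at the head can never be erased or turned into a $\l$ by $\be\m\m'$-reduction — this is the standard ``$\m$ is persistent at the head'' observation, analogous to Lemma \ref{l7}); and $M$ cannot be of the form $(\al \; M_1)$ since $((\al \; M_1) \; N)$ has no head redex. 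In the two internal cases apply the induction hypothesis to the shorter reduction $R \ras \l x P$: from $M' \ras \l y M_1$ with $M_1[y:=N] \ras \l x P$ we get $M \ras \l y M_1$ directly; from $M \ras \l y M_1$ with $M_1[y := N'] \ras \l x P$ we use $N \tr N'$ to get $M_1[y:=N] \ras M_1[y:=N'] \ras \l x P$.

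\medskip

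\noindent\emph{Part 2.} The structure is the same, but now a head reduction can be a $\be$-step ($M = \l y M_1$, $R = M_1[y:=N]$), a $\m$-step ($M = \m \al M_1$, $R = \m \al M_1[\al=_r N]$, which is already $\m \al (\cdots)$, so $R \ras \m\al P$ forces, by persistence of the head $\m$, that $P$ is a reduct of $M_1[\al=_r N]$ — giving the second disjunct), or a $\m'$-step ($N = \m \al N_1$, $R = \m \al N_1[\al=_l M]$, giving the third disjunct similarly). For the internal-to-$M$ case $R = (M' \; N)$, apply the induction hypothesis to $M' \ras \l y M_1$ / $\m\al M_1$ / and $N \ras \m\al N_1$; the first two lift to $M$ immediately, and in the last ($N \ras \m\al N_1$, $N_1[\al=_l M'] \ras P$) we compose with $N_1[\al=_l M] \ras N_1[\al=_l M']$ since $\m'$-substitution is monotone under reduction of its argument. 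The internal-to-$N$ case is symmetric, using that $\m$-substitution is monotone under reduction of its argument, i.e.\ $M_1[\al=_r N] \ras M_1[\al=_r N']$ when $N \tr N'$.

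\medskip

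\noindent The one point that needs care — and which I expect to be the main (though still routine) obstacle — is the ``persistence'' fact: if a term begins with $\m$ (resp.\ is of the form $\m\al(\cdots)$) then every $\be\m\m'$-reduct of it again begins with $\m$, and moreover the body reduces accordingly. This is exactly what lets us rule out $M = \m\al M_1$ in part 1 and pin down $P$ in the head-$\m$/$\m'$ cases of part 2. It follows by an easy induction on the length of the reduction, noting that no rule among $\be$, $\m$, $\m'$ has a left-hand side of the form $\m\al(\cdots)$ that could be rewritten at the root to something not starting with $\m$, and that a reduction inside the body $M_1$ keeps the head $\m$ untouched. Everything else is bookkeeping of the kind already carried out in Lemmas \ref{ll2} and \ref{l7}.
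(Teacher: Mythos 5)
Your proof is correct and follows essentially the paper's own route: the paper proves part (2) by the same induction on the length of the reduction as in Lemma \ref{ll2} and dismisses part (1) as trivial, which is exactly your case analysis of the first reduction step plus the observation that a head $\m$ persists under $\be\m\m'$-reduction. One small slip: in Part 1 your claim that a root redex ``requires $M$ to begin with the appropriate constructor'' overlooks the $\m'$-step fired by $N = \m \al N_1$ (giving $R = \m\al N_1[\al =_l M]$), but this case is excluded at once by the very persistence argument you already state for $M = \m\al M_1$, so adding it makes the case analysis complete.
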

\begin{proof}
(1) is trivial. (2) is as in lemma \ref{ll2}.
\end{proof}

\begin{lemma}\label{l8a}
Let $M \in SN$ and $\sig=[x_1:=N_1, ..., x_k:=N_k]$. Assume
$M[\sig] \ras \la y P$. Then, either $M \ras \la y P_1$ and
$P_1[\sig] \ras P$
 or $M \ras  (x_i \; \overrightarrow{Q})$ and $(N_i \;
  \overrightarrow{Q[\sig]})\ras \la y P$.

\end{lemma}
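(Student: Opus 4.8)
The plan is to prove Lemma~\ref{l8a} by induction on the term $M$, following the pattern of the analogous lemmas for the pure $\m\m'$-calculus (Lemmas~\ref{ll2} and~\ref{l7}) but now keeping track of how a $\lambda$ at the root can be produced by a $\beta$-substitution. The statement says: if $M[\sig]\ras \la y P$, then either the head $\lambda$ already comes from $M$ (so $M\ras \la y P_1$ with $P_1[\sig]\ras P$), or the head of $M$ is one of the substituted variables applied to a spine $\overrightarrow Q$, i.e. $M\ras (x_i\;\overrightarrow Q)$, and after substitution $(N_i\;\overrightarrow{Q[\sig]})\ras \la y P$.

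First I would do the case analysis on the syntactic form of $M$. If $M$ is a variable: if $M=x_i$, then $M\ras (x_i\;\overrightarrow Q)$ with empty spine and $N_i=N_i[\sig]\ras \la y P$, so we are in the second case; if $M$ is a free variable not in $\dom(\sig)$, then $M[\sig]=M$ cannot reduce to $\la y P$, contradiction. If $M=\la y M'$, we are trivially in the first case. If $M=\m\al M'$, then $M[\sig]=\m\al M'[\sig]$ which can never reduce to a term beginning with $\la$ (a $\m$ at the root is never erased by $\beta\m\m'$-reduction and the head symbol stays a $\m$ unless a $\theta$/$\rho$ rule fires, which we have excluded), contradiction. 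The same remark handles $M=(\al\;M')$. The only substantial case is $M=(M_1\;M_2)$.

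For $M=(M_1\;M_2)$: we have $M[\sig]=(M_1[\sig]\;M_2[\sig])\ras \la y P$. By Lemma~\ref{lammu}(1), $M_1[\sig]\ras \la z R$ for some $z,R$ with $R[z:=M_2[\sig]]\ras \la y P$. Now apply the induction hypothesis to $M_1$ (which is in $SN$, being a subterm of $M\in SN$) and the reduction $M_1[\sig]\ras \la z R$. Either $M_1\ras \la z R_1$ with $R_1[\sig]\ras R$ — then $M=(M_1\;M_2)\ras (\la z R_1\;M_2)\tr R_1[z:=M_2]$, and $R_1[z:=M_2][\sig]=R_1[\sig][z:=M_2[\sig]]\ras R[z:=M_2[\sig]]\ras \la y P$, so by a further application of the induction hypothesis to $R_1[z:=M_2]$ (which is in $SN$ since $M\in SN$ and $M$ reduces to it) with the same $\sig$ we land in one of the two desired cases. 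Or $M_1\ras (x_i\;\overrightarrow Q)$ with $(N_i\;\overrightarrow{Q[\sig]})\ras \la z R$ — then $M=(M_1\;M_2)\ras ((x_i\;\overrightarrow Q)\;M_2)=(x_i\;\overrightarrow Q\,M_2)$, i.e. $M\ras (x_i\;\overrightarrow{Q'})$ with $\overrightarrow{Q'}=\overrightarrow Q,M_2$, and $(N_i\;\overrightarrow{Q'[\sig]})=((N_i\;\overrightarrow{Q[\sig]})\;M_2[\sig])\ras (\la z R\;M_2[\sig])\tr R[z:=M_2[\sig]]\ras \la y P$, which is exactly the second case.

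The main obstacle is the bookkeeping in the application case: one must be careful that ``$M\ras (x_i\;\overrightarrow Q)$'' is preserved when we append $M_2$ to the spine, and that the nested appeals to the induction hypothesis are on structurally smaller (or reduct-of-smaller) terms so the induction is well-founded — here one uses that $M\in SN$ together with $SN$-stability under reduction, measuring by $\eta c$ of $M$ rather than raw size, exactly as in the earlier proofs. The key conceptual point, already isolated in the discussion preceding the lemma, is that a head $\lambda$ in $M[\sig]$ either was already present in $M$ or was ``fed in'' through a substituted variable occurring in head position with some arguments; there is no third possibility because $\beta$ is the only rule that can create a $\lambda$ at the root and it does so only by contracting a redex whose function part traces back, through the head spine, to either an explicit $\lambda$ in $M$ or to one of the $N_i$.
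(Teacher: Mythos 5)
Your proof is correct and follows essentially the same route as the paper: induction on $\eta c(M)$ (justified by $M\in SN$), with the application case $M=(M_1\;M_2)$ handled via Lemma~\ref{lammu}(1) and two nested appeals to the induction hypothesis, first on the subterm $M_1$ and then on the reduct $R_1[z:=M_2]$ of $M$. The only cosmetic difference is that you spell out the trivial base cases and the spine-extension bookkeeping that the paper leaves implicit.
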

\begin{proof}
By induction on $\eta c(M)$. The only non immediate case is $M=(R
\; S)$. By lemma \ref{lammu}, there is a term $R_1$ such that
$R[\sig] \ras \la z R_1$ and $R_1[z:=S[\sig]] \ras \la y P$. By
the induction hypothesis (since $\eta c(R) < \eta c(M)$), we have
two cases to consider.

(1) $R \ras \la z R_2$ and $R_2[\sig] \ras R_1$, then
$R_2[z:=S][\sig] \ras \la y P$. By the induction hypothesis (since
$\eta(R_2[z:=S])< \eta(M)$),

- either $R_2[z:=S] \ras \la y P_1$ and $P_1[\sig] \ras P$ ; but
then $M \ras \la y P_1$ and we are done.

- or $R_2[z:=S] \ras (x_i \; \overrightarrow{Q})$ and $(N_i \;
\overrightarrow{Q[\sig]})\ras \la y P$, then $M \ras (x_i \;
\overrightarrow{Q})$ and again we are done.

(2) $R \ras (x_i \; \overrightarrow{Q})$ and $(N_i \;
  \overrightarrow{Q[\sig]})\ras \la z R_1$. Then $M \ras (x_i \; \overrightarrow{Q} \;
  S)$ and the result is trivial.
\end{proof}

\begin{definition}\rm\label{def}
\begin{itemize}
  \item An  address is a finite list of symbols in
  $\{l,r\}$. The empty list is denoted by $[]$ and, if $a$ is an
  address and $s \in \{l,r\}$, $[s::a]$ denotes the list obtained
  by putting $s$ at the beginning of $a$.
  \item Let $a$ be an  address and $M$ be a term. The
  sub-term of $M$ at the address $a$ (denoted as $M_a$) is defined
  recursively as follows : if $M=(P \; Q)$ and $a=[r::b]$ (resp. $a=[l::b]$) then
  $M_a=Q_b$ (resp. $P_b$) and undefined otherwise.
  \item Let $M$ be a term and $a$ be an  address
  such that $M_a$ is defined. Then $M\langle a=N \rangle$ is the
  term $M$ where the sub-term $M_a$ has been replaced by
  $N$.
  \item Let $M,N$ be some terms and $a$ be an  address
  such that $M_a$ is defined. Then $N[\al=_aM]$ is the term $N$ in
  which each sub-term of the form $(\al \; U)$ is replaced by
  $(\al \; M\langle a=U\rangle)$.

\end{itemize}
\end{definition}

\begin{remark}

- Let $N = \la x (\al \; \la y (x \; \mu \b (\al \; y)))$, $M = (M_1 \; (M_2
\; M_3))$ and $a=[r::l]$. Then $N[\al=_aM] = \la x (\al \;
(M_1 \; (\la y (x \; \mu \b (\al \;(M_1 \; (y \; M_3)) )) \; M_3)))$.

- Let $M=(P \; ((R \; (x \; T)) \; Q))$ and $a=[r::l::r::l]$. Then
$N[\al=_aM]= N[\al=_r T][\al=_lR][\al=_r Q][\al=_rP]$.

- Note that the sub-terms of a term having an address in the sense
given above are those for which the path to the root consists only
on applications (taking either the left or right son).

 - Note
that $[\al=_{[l]}M]$ is not the same as $[\al=_lM]$ but
$[\al=_lM]$ is the same as $[\al=_{[r]}(M \; N)]$ where $N$ does
not matter. More generally, the term $N[\al=_aM]$ does not depend
of $M_a$.

- Note that  $M\langle a=N \rangle$ can be written as $M'[x_a:=N]$
where $M'$ is the term $M$ in which $M_a$ has been replaced by the
fresh variable $x_a$ and thus (this will be used in the proof of
lemma \ref{thm}) if $M_a$ is a variable $x$, $(\al \; U)[\al=_aM]=
(\al \; M_1[y:=U[\al=_aM]])$ where $M_1$ is the term $M$ in which
the particular occurrence of $x$ at the address $a$ has been
replaced by the fresh name $y$ and the other occurrences of $x$
  remain unchanged.

  \end{remark}

 \begin{lemma}\label{new}
Let $M$ be a term and $\sigma = [\al_1=_{a_1}N_1, ...,
  \al_n=_{a_n}N_n]$.
\begin{enumerate}
\item If $M[\sigma]  \ras \l x P$, then $M \ras \l x Q$ and
$Q[\sigma]  \ras P$.
\item If $M[\sigma]   \ras \m \al P$, then $M \ras \m \al Q$ and
$Q[\sigma]  \ras P$.
\end{enumerate}
\end{lemma}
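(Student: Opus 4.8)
The aim is to show that an address-substitution $\sig=[\al_1=_{a_1}N_1,\ldots,\al_n=_{a_n}N_n]$ is as well-behaved as an ordinary $\m$- or $\m'$-substitution (Lemma~\ref{l7}): it never creates, at the root of $M[\sig]$, a $\la$ or a $\m$ that is not already present --- modulo reduction --- at the root of $M$. I would prove both items simultaneously, by induction on the length of the given reduction with a secondary induction on $cxty(M)$, by a case analysis on the shape of $M$.

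If $M$ is a variable $x$, or has the form $\la y\,M'$, $\m\be\,M'$, or $(\be\;M')$ with $\be$ a $\m$-variable, the statement is immediate. Applying $\sig$ does not change the leading constructor of $M$ in any of these cases --- when $M=(\be\;M')$, $M[\sig]$ is again of the form $(\gamma\;N)$ for some $\m$-variable $\gamma$ --- and, since the rules $\rho$ and $\theta$ are \emph{not} considered here, reduction preserves the leading constructor of a term headed by a $\la$, by a $\m$, or by a variable application. Hence either the conclusion is vacuous (e.g.\ $M[\sig]$ cannot reduce to $\m\al\,P$ when $M=\la y\,M'$) or one takes for $Q$ the obvious body (e.g.\ $Q=M'$ when $M=\la x\,M'$ and the target is $\la x\,P$).

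The only real case is $M=(M_1\;M_2)$, so $M[\sig]=(M_1[\sig]\;M_2[\sig])$. By Lemma~\ref{lammu}, the given reduction factors in one of the following ways (for the target $\la x\,P$ only the first can occur): $M_1[\sig]\ras\la y\,R_1$ and $R_1[y:=M_2[\sig]]$ reduces to the target; or $M_1[\sig]\ras\m\al\,R_1$ and $R_1[\al=_rM_2[\sig]]\ras P$; or $M_2[\sig]\ras\m\al\,R_1$ and $R_1[\al=_lM_1[\sig]]\ras P$. In the last two subcases the induction hypothesis applied to $M_1$ (resp.\ $M_2$) gives $M_1\ras\m\al\,R$ (resp.\ $M_2\ras\m\al\,R$) with $R[\sig]\ras R_1$; performing the matching $\m$- (resp.\ $\m'$-)reduction in $M$ and using that an address-substitution commutes with a $\m$- or $\m'$-substitution (so $R[\al=_rM_2][\sig]=R[\sig][\al=_rM_2[\sig]]$, after renaming $\al$ away from $\sig$, and symmetrically on the left), one sees at once that $Q:=R[\al=_rM_2]$ (resp.\ $R[\al=_lM_1]$) works. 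In the remaining subcase the induction hypothesis on $M_1$ gives $M_1\ras\la y\,R$ with $R[\sig]\ras R_1$, whence $M\ras(\la y\,R\;M_2)\tr R[y:=M_2]$; since $\sig$ also commutes with ordinary substitution ($R[y:=M_2][\sig]=R[\sig][y:=M_2[\sig]]$, after the usual renaming of the $N_i$ away from $y$), we obtain $R[y:=M_2][\sig]\ras R_1[y:=M_2[\sig]]$, which reduces to the target; it then remains to re-apply the lemma to $R[y:=M_2]$ and conclude.

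The main obstacle is exactly this re-application: $R[y:=M_2]$ is \emph{not} a subterm of $M$, and its reduction to the target need not be shorter than the original, because the $\be$-substitution $[y:=M_2]$ may duplicate subterms of $M_2$. To close the induction I would strengthen the statement so that it also controls reduction lengths --- asking, in addition, that $lg(M\ras\la x\,Q)+lg(Q[\sig]\ras P)\le lg(M[\sig]\ras\la x\,P)$ (and similarly for the $\m$-case) --- so that the re-application is made to a reduction of strictly smaller length, while the secondary induction on $cxty(M)$ covers the recursions into $M_1$ and $M_2$. Alternatively, since in every later use of this lemma the terms involved are in $SN$, one may route the problematic subcase through Lemma~\ref{l8a} instead. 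The rest is a routine verification of the commutation identities between address-substitutions, $\m/\m'$-substitutions, and ordinary substitution.
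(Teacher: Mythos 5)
Your plan --- case analysis on $M$, lemma \ref{lammu} for the application case, and the commutations $Q[\al=_rM_2][\sig]=Q[\sig][\al=_rM_2[\sig]]$, $Q[y:=M_2][\sig]=Q[\sig][y:=M_2[\sig]]$ --- is the same as the paper's (the paper proves the lemma precisely by induction on $M$ using lemma \ref{lammu}, the $\m$/$\m'$ subcases closing immediately exactly as in lemma \ref{l7}), and your treatment of the easy cases and of the $\m$/$\m'$ subcases is fine. The problem is the $\be$-subcase, which you rightly single out as the crux but do not close. Your main repair --- strengthening the statement with the bound $lg(M\ras\l x Q)+lg(Q[\sig]\ras P)\le lg(M[\sig]\ras\l x P)$ --- is false, so no induction can be run on it. Counterexample: take $M=(\la y\,\la x\,((y\;y)\;y)\;\;(\be\;z))$ and $\sig=[\be=_{[r]}W]$ with $W=((\la u\,u\;w)\;v)$, so that $(\be\;z)[\sig]=(\be\;((\la u\,u\;w)\;z))$. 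Contracting first the redex $(\la u\,u\;w)$ and then the head $\be$-redex gives $M[\sig]\ras\l x P$ in $2$ steps, with $P=((S\;S)\;S)$ and $S=(\be\;(w\;z))$. But $M$ has a single redex, so its only $\l$-headed reduct is $\l x Q$ with $Q=(((\be\;z)\;(\be\;z))\;(\be\;z))$, reached in $1$ step, and $Q[\sig]\ras P$ costs $3$ steps (one per copy of $(\be\;z)$); the cheapest decomposition thus costs $4>2$. The duplication produced by the head $\be$-substitution is exactly what defeats any such length accounting; it is also why lemma \ref{l8a}, the analogue for $\be$-substitutions, is stated only for $M\in SN$, proved by induction on $\eta c(M)$, and has a weaker, disjunctive conclusion.

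Your fallback (``the terms are in $SN$ in all later uses, so route through lemma \ref{l8a}'') is only a sketch: it changes the statement (lemma \ref{new} carries no $SN$ hypothesis), it replaces the induction you set up by one on $\eta c$, and you would still have to absorb the second disjunct of lemma \ref{l8a} into the conclusion. So as it stands the proposal has a genuine gap at the only delicate point of the lemma: the subcase of lemma \ref{lammu} where the head interaction is a $\be$-step, in which one must justify re-applying the lemma to $R[y:=M_2]$ --- a term that is neither a subterm of $M$ nor, with the reduction you reconstruct, reached by a shorter reduction. A workable direction is to induct on the length of the given reduction and analyse its \emph{first} step: when that step is the head redex, $M_1$ is literally a $\l$- (or $\m$-) abstraction and the remaining reduction is a genuine suffix, of length $k-1$, starting from $R[y:=M_2][\sig]$; the price is that one must then also handle first steps occurring inside the material inserted by $\sig$, e.g.\ by generalizing $\sig$ so that different occurrences may carry reduced copies of the $N_i$. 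Some device of this kind, beyond the one you propose, is needed.
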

\begin{proof}
By induction on $M$. Use lemma \ref{lammu}.
\end{proof}

  \begin{lemma}\label{l30}
Assume $M,N \in SN$ and $(M \; N) \not\in SN$. Then, either ($M
\ras \la y P$ and $P[y:=N] \not\in SN$) or
  ($M \ras \mu \al P$ and $P[\al =_r N] \not\in SN$) or
 ($N \ras \mu \al P$ and $P[\al =_l M] \not\in SN$).
\end{lemma}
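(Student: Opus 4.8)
The plan is to prove Lemma \ref{l30} by induction on $\eta(M)+\eta(N)$, exactly in the spirit of Lemmas \ref{1l12}, \ref{1l30} and \ref{l12}, but now accounting for the fact that a $\beta$-redex may be created at the root. Since $(M\;N)\notin SN$ while $M,N\in SN$, there is a one-step reduct $P$ of $(M\;N)$ with $P\notin SN$. If the reduction takes place inside $M$, say $M\tr M'$, then $M'\in SN$, $\eta(M')<\eta(M)$, and $(M'\;N)\notin SN$, so the induction hypothesis applies and gives one of the three desired conclusions (using that $M\tr M'\ras\la y P$ etc. still yields $M\ras\la yP$, and likewise for $\m$). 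The case of a reduction inside $N$ is symmetric. This leaves the cases where the redex contracted is at the root of $(M\;N)$.

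The root cases are exactly three, according to the shape that $M$ (or $N$) must have for $(M\;N)$ to be a redex. First, if $M=\la y M_1$ and $P=M_1[y:=N]$: then $M\ras\la yM_1$ trivially and $P=M_1[y:=N]\notin SN$, which is the first alternative. Second, if $M=\m\al M_1$ and $P=\m\al M_1[\al=_rN]$: then $M_1[\al=_rN]\notin SN$ (since $\m\al M_1[\al=_rN]\notin SN$ iff $M_1[\al=_rN]\notin SN$), and $M\ras\m\al M_1$, giving the second alternative. Third, symmetrically, if $N=\m\al N_1$ and $P=(M\;\m\al N_1)\tr\m\al N_1[\al=_lM]$: then $N_1[\al=_lM]\notin SN$ and $N\ras\m\al N_1$, giving the third alternative. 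These are all the ways a $\be\m\m'$-redex can occur at the root of an application, so the case analysis is complete.

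The only subtlety — and the point worth stating carefully — is that, unlike in the untyped $\m\m'$-setting of Lemma \ref{l12}, one must be mildly careful when the top reduction is \emph{inside} $M$ or $N$: after applying the induction hypothesis we obtain, say, $M\tr M'$ and $M'\ras\la yP_1$ with $P_1[y:=N]\notin SN$, and we need $M\ras\la yP_1$, which is immediate by transitivity; the analogous composition works in the $\m$ cases. No genuinely new interaction between $M$ and $N$ arises here, because we are only asserting the existence of \emph{some} head reduction producing the redex, not controlling its internal structure — the delicate interaction alluded to in the preamble (where alternation between $M$ and $N$ is lost) is confined to later lemmas such as Lemma \ref{l8b} and the main lemma \ref{thm}, not to this statement. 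I therefore expect no real obstacle: the proof is a routine induction, and the author's proof is presumably just the terse phrase ``by induction on $\eta(M)+\eta(N)$,'' as for the parallel lemmas.
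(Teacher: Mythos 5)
Your proof is correct and follows exactly the paper's (unstated but intended) argument: the paper's own proof is just ``By induction on $\eta(M)+\eta(N)$,'' mirroring the spelled-out proof of Lemma \ref{l12} with the extra root case for a $\beta$-redex, which is precisely your case analysis. Nothing further is needed.
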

\begin{proof}
By induction on $\eta(M)+\eta(N)$.
\end{proof}

\begin{lemma}
If $\G \v M : A$ and $M \ras N$ then $\G \v N : A$.
\end{lemma}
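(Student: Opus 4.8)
The statement to prove is the standard subject reduction (type preservation) property for the simply typed symmetric $\lambda\mu$-calculus: if $\G \v M : A$ and $M \ras N$, then $\G \v N : A$. The plan is to reduce to the one-step case (since $\ras$ is the reflexive-transitive closure of $\tr$, a trivial induction on the length of the reduction handles the general case), and then argue by induction on the derivation of $\G \v M : A$, or equivalently on the structure of $M$, handling each reduction rule at the root and using the induction hypothesis for reductions inside a subterm.

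First I would establish the usual auxiliary substitution lemmas. For the $\beta$-rule I need: if $\G, x : A \v M : B$ and $\G \v N : A$ then $\G \v M[x:=N] : B$ — proved by induction on the derivation of the typing of $M$. For the $\mu$- and $\mu'$-rules I need the corresponding statements for the new substitutions: if $\G, \al : \neg A \v M : \bot$ (with $M$ such that the relevant occurrences $(\al\;U)$ appear) and $\G \v N : C$, then in $M[\al =_r N]$ the variable $\al$ gets retyped — following Remark (2) in the excerpt, if in $M$ the type of $\al$ is $\neg(C \to B)$ and $N : C$, then $M[\al =_r N]$ is typable in the context where $\al : \neg B$, still with type $\bot$; symmetrically for $[\al =_l N]$ when $\al : \neg C$ and $N : C \to B$ gives $\al : \neg B$. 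These are again proved by induction on $M$: the only interesting case is $M = (\al\; U)$, where one checks the arithmetic of the arrow types works out, and the application/abstraction/$\mu$ cases are routine propagation.

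With these lemmas in hand, the main proof is a case analysis on the one-step reduction $M \tr N$. If the redex is not at the root, $M = C[M']$ with $M' \tr N'$; I invert the typing derivation to extract a typing of $M'$, apply the induction hypothesis to get the same typing for $N'$, and re-assemble the derivation — this works because each typing rule is "compositional" in its premises. If the redex is at the root, I invert the last few rules of the derivation to expose the types of the components, then apply the appropriate substitution lemma: $\beta$ uses the $[x:=N]$ lemma; $\mu$ and $\mu'$ use the $[\al =_r N]$ and $[\al =_l N]$ lemmas respectively, checking that the retyped context $\G, \al : \neg B$ together with the $\bot_e$ rule still yields type $A$; $\rho$ (i.e. $(\al\; \mu\be M) \tr M[\be := \al]$) uses that $\al$ and $\be$ have the same negated type, so renaming is fine; and $\theta$ ($\mu\al(\al\; M) \tr M$ when $\al \notin M$) just drops an unused hypothesis.

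The main obstacle — really the only place requiring care rather than bookkeeping — is the verification for the $\mu$ and $\mu'$ rules that the $\mu$-substitution is type-correct with the shifted type of $\al$, i.e. getting the arrow arithmetic of Remark (2) exactly right and confirming that replacing every $(\al\;U)$ by $(\al\;(U\;N))$ (resp. $(\al\;(N\;U))$) preserves well-typedness under the new context. Everything else is the standard inversion-and-reassembly argument, and I would present it compactly, spelling out only the $\mu$-case in full and remarking that the others are analogous or immediate.
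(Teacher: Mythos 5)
Your proposal is correct and is exactly the standard subject-reduction argument that the paper dismisses as ``Straightforward'': reduce to one step, prove the substitution lemmas (ordinary for $\beta$, and for $[\al =_r N]$, $[\al =_l N]$ with the retyping of $\al$ described in Remark~2), and close under contexts by inversion. Note only that in this section the reduction is the $\be\m\m'$-reduction (the rules $\rho$ and $\theta$ are explicitly excluded), so those two cases in your analysis are harmless but not needed.
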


\begin{proof}
Straightforward.
\end{proof}

\begin{lemma}\label{l8b}
Let $n$ be an integer,  $M \in SN$, $\sig=[x_1:=N_1, ...,
x_k:=N_k]$ where $lg(type(N_i))=n$ for each $i$.  Assume $M[\sig]
\ras \mu \al P$. Then,
\begin{enumerate}
  \item either $M \ras \mu \al P_1$ and $P_1[\sig] \ras P$
  \item  or $M \ras  Q$ and, for some $i$, $N_i \ras \m\al N'_i$  and $N'_i[\al=_aQ[\sig]] \ras
P$ for some
 address $a$ in $Q$ such that  $Q_a=x_i$.
  \item or $M \; \ras  Q$,  $Q_a[\sig] \ras \m \al N'$ and
$N'[\al=_aQ[\sig]] \ras P$ for some
 address $a$ in $Q$ such that $lg(type(Q_a)) < n$ .
\end{enumerate}
\end{lemma}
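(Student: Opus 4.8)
The plan is to prove Lemma~\ref{l8b} by induction on $\eta c(M)$, mimicking the structure of the proof of Lemma~\ref{l8a} but keeping track of the new phenomena caused by $\mu$'s. The base situation is when $M$ is a variable, an abstraction, or starts with a $\mu$: if $M=x_i$ then $M[\sig]=N_i$ and, since $N_i \in SN$ and $N_i \ras \mu\al P$, case~(2) holds with $Q=x_i$, $a=[]$, $N'_i = $ a suitable reduct and an empty address substitution; if $M = \mu\al M'$ then Lemma~\ref{new} (applied to the $\be$-substitution $\sig$, viewed appropriately) or a direct argument gives case~(1); if $M=\la y M'$ it cannot reduce to a term beginning with $\mu$ unless it is applied, so this case is vacuous here. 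The only genuine case is $M = (R \; S)$.

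For $M=(R\;S)$, first apply Lemma~\ref{lammu}(2) to $M[\sig]=(R[\sig]\;S[\sig]) \ras \mu\al P$. This gives three subcases: (a) $R[\sig] \ras \la z R_1$ with $R_1[z:=S[\sig]] \ras \mu\al P$; (b) $R[\sig] \ras \mu\al R_1$ with $R_1[\al=_r S[\sig]] \ras P$; (c) $S[\sig] \ras \mu\al S_1$ with $S_1[\al=_l R[\sig]] \ras P$. In subcase (b) I would apply the induction hypothesis to $R$ (since $\eta c(R) < \eta c(M)$): if $R$ reduces to something starting with $\mu$ (its case~(1)) we get $R \ras \mu\al R_2$, $R_2[\sig]\ras R_1$, hence $M \ras \mu\al (R_2\;S)[\al=_r?]$ — more precisely $M\ras \mu\al R_2[\al=_r S]$ wait, no: $(\mu\al R_2\;S)\tr_\mu \mu\al R_2[\al=_r S]$, and $R_2[\al=_r S][\sig] = R_2[\sig][\al=_r S[\sig]] \ras R_1[\al=_rS[\sig]]\ras P$, giving case~(1) for $M$. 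If $R$ falls into case~(2) or~(3) of the IH, with address $a$ in some reduct $Q'$ of $R$, then $M \ras (Q' \; S)$ and the address $[l::a]$ in $(Q'\;S)$ points to the same subterm, so cases~(2)/(3) propagate with the extended address. Subcase (c) is symmetric, using $S$ and addresses of the form $[r::a]$, plus the observation that $(R[\sig]\;\mu\al S_1)\tr_{\mu'}\mu\al S_1[\al=_l R[\sig]]$ and $S_1[\al=_l R]$ corresponds to an address substitution $[\al=_{[l]} (R\;\text{anything})]$, i.e. the relevant reduct $Q=(R\;S)$ with $Q_{[l]}=R$, $lg(type(R))$ being irrelevant here since this is really case~(2)-flavoured when $S\ras\mu\al S_1$ genuinely — actually this is the clean $\mu'$ case giving case~(1)-like behaviour after noticing $S$ was already a $\mu$; one checks it reduces to the case-(1) conclusion for $M$ directly.

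Subcase (a) is the hard part and is where the typing hypothesis $lg(type(N_i))=n$ is used. Here $R[\sig]\ras \la z R_1$, so by Lemma~\ref{l8a} applied to $R$ and $\sig$, either $R \ras \la z R_2$ with $R_2[\sig]\ras R_1$, or $R \ras (x_i\;\overrightarrow{U})$ with $(N_i\;\overrightarrow{U[\sig]})\ras \la z R_1$. In the first sub-possibility, $M \ras (\la z R_2 \; S) \tr_\be R_2[z:=S]$, and $R_2[z:=S][\sig] = R_2[\sig][z:=S[\sig]] \ras R_1[z:=S[\sig]] \ras \mu\al P$; since $\eta c(R_2[z:=S]) < \eta c(M)$ (a $\be$-step), the induction hypothesis applies to $R_2[z:=S]$ and $\sig$, and each of its three conclusions directly yields the corresponding conclusion for $M$ (the term $R_2[z:=S]$ is a reduct of $M$, so any further reduct $Q$ of it is a reduct of $M$, and addresses are inherited). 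In the second sub-possibility, $M \ras (x_i\;\overrightarrow{U}\;S)$, so setting $Q=(x_i\;\overrightarrow{U}\;S)$ we have $Q[\sig]=(N_i\;\overrightarrow{U[\sig]}\;S[\sig])$ and the address $a=[l::\cdots::l::r]$ reaching the leftmost head, with $Q_a = x_i$, and $lg(type(Q_a))=lg(type(x_i))=n$. Now $Q[\sig] \ras (\la z R_1 \; S[\sig]) \tr_\be R_1[z:=S[\sig]] \ras \mu\al P$; since $N_i\in SN$, a further analysis — essentially Lemma~\ref{l8b} applied to the configuration $(N_i\;\overrightarrow{U[\sig]}\;S[\sig])$ at lower $\eta c$, together with Lemma~\ref{ll2}/\ref{l12} — shows the $\mu$ comes either from within $N_i$ (then $N_i\ras\mu\al N'_i$ and the successive applications to $\overrightarrow{U[\sig]},S[\sig]$ collapse into a single $[\al=_a Q[\sig]]$ with $Q_a=x_i$, case~(2)), or from within one of the $U[\sig]$ or $S[\sig]$, i.e. from $Q_b[\sig]\ras\mu\al N'$ for an address $b$ with $Q_b$ a proper subterm whose type is a strict subformula of $type(N_i)$, hence $lg(type(Q_b)) < n$, case~(3). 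Making this last bookkeeping precise — tracking how iterated $\mu$-substitutions against a spine of arguments coalesce into one address substitution, and verifying the strict type decrease — is the main obstacle; everything else is routine propagation of the induction hypothesis through reducts and address extensions.
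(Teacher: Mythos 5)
Your overall strategy is the paper's: induction on $\eta c(M)$, case analysis via lemma \ref{lammu} on how the head application $(R[\sig]\;S[\sig])$ gets reduced, lemma \ref{l8a} in the $\be$ subcase, and extension of addresses by $[l::a]$ or $[r::a]$ to propagate the induction hypothesis in the $\m$ and $\m'$ subcases. But the subcase you yourself flag as ``the main obstacle'' is a genuine gap, and the sketch you give for it would not go through. When $R \ras (x_i\;\overrightarrow{U})$ and $(N_i\;\overrightarrow{U[\sig]})\ras\la z R_1$, you propose to locate the origin of the $\m$ in $R_1[z:=S[\sig]]\ras\m\al P$ by ``applying lemma \ref{l8b} to the configuration $(N_i\;\overrightarrow{U[\sig]}\;S[\sig])$ at lower $\eta c$''. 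That configuration is $Q[\sig]$ for $Q=(x_i\;\overrightarrow{U}\;S)$, a reduct of $M$ which may be $M$ itself, so the measure need not decrease and the induction hypothesis is not available for it. Worse, the dichotomy you assert (either $N_i\ras\m\al N'_i$, or $Q_b[\sig]\ras\m\al N'$ for some argument $Q_b$ among the $U_j$ or $S$) is false: take $\overrightarrow{U}$ empty, $N_i=\la z\,(z\;E)$ and $S[\sig]=\la w\,\m\al C$; then $(N_i\;S[\sig])\ras(\la w\,\m\al C\;E)\tr\m\al C[w:=E]$, the $\m$ is created by the interaction, and neither $N_i$ nor $S[\sig]$ (nor any other argument) reduces to a term beginning with $\m$. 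This is exactly the ``alternation can be lost'' phenomenon the lemma is designed to handle, and your case (2)/(3) bookkeeping cannot recover it as stated.

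The paper closes this case in one line, using the very term you wrote down but a different address: take $Q=(x_i\;\overrightarrow{U}\;S)$ and $a=[\,]$. Then $Q_{[\,]}=Q$, $Q[\sig]\ras\m\al P$ holds by hypothesis, the substitution $[\al=_{[\,]}Q[\sig]]$ is the identity (so $N'=P$ works), and $lg(type(Q_{[\,]}))=lg(type(M))<n$ because $x_i$, whose type has length $n$, is applied to at least the argument $S$. So conclusion (3) holds outright, with no analysis at all of where the $\m$ ``really'' comes from; the type bound in conclusion (3) exists precisely to make this cheap exit legitimate (and it is then lemma \ref{crux'} and lemma \ref{thm}, not lemma \ref{l8b}, that pay for it). Two minor further points: the head of the spine sits at an all-$l$ address, not one ending in $r$; and in your subcase (c) the correct move is simply to apply the induction hypothesis to $S$ and extend addresses by $[r::a]$ — your closing remark there slides into assuming $S$ itself reduces to a $\m$-abstraction, which is exactly what cannot be assumed.
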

\begin{proof}
By induction on $\eta c(M)$.   The only non immediate case is
$M=(R \; S)$. Since $M[\sig] \ras \mu \al P$, the application
$(R[\sig] \; S[\sig])$ must be reduced. Thus there are three cases
to consider.

\begin{itemize}
  \item It is reduced by a $\m'$-reduction, i.e. there is a term $S_1$
  such that $S[\sig] \ras \mu \al S_1$ and $S_1[\al=_lR[\sig]] \ras
  P$.  By the induction hypothesis: \\ - either $S \ras \m\al Q$ and
  $Q[\sig] \ras S_1$, then $M \ras \m \al Q[\al=_lR]$ and
  $Q[\al=_lR][\sig] \ras P$.\\ - or $S \ras Q$ and, for some $i$, $N_i \ras \m\al N'_i$,
  $Q_a=x_i$ for some address $a$ in $Q$ and $N'_i[\al=_aQ[\sig]] \ras S_1$.
  Then $M \ras (R \; Q)=Q'$ and letting $b=[r::a]$ we have
  $N'_i[\al=_bQ'[\sig]] \ras P$. \\ - or $S \;\ras Q$, $Q_a[\sig] \ras
  \m \al N'$ for some address $a$ in $Q$ such that $lg(type(Q_a)) <
  n$ and $N'[\al=_aQ[\sig]] \ras S_1$. Then $M \ras (R \; Q)=Q'$ and
  letting $b=[r::a]$ we have $N'[\al=_bQ'[\sig]] \ras P$ and
  $lg(type(Q'_b))<n$.
  \item It is reduced by a $\m$-reduction. This
  case is similar to the previous one.
   \item It is reduced by a
  $\be$-reduction, i.e. there is a term $U$ such that $R[\sig] \ras
  \la y U$ and $U[y:=S[\sig]] \ras \mu \al P$. By lemma \ref{l8a},
  there are two cases to consider. \\ - either $R \ras \la y R_1$ and
  $R_1[\sig][y:=S[\sig]]=R_1[y:=S][\sig] \ras \mu \al P$. The result
  follows from the induction hypothesis since $\eta(R_1[y:=S]) <
  \eta(M)$.\\ - or $R \ras (x_i \; \overrightarrow{R_1})$. Then $Q=(x_i \;
\overrightarrow{R_1} \; S)$ and $a=[]$ satisfy the desired
conclusion since then $lg(type(M)) <n$.\qed
\end{itemize}
\end{proof}

\begin{definition}\rm
Let $A$ be a type. We denote by $\Sigma_A$ the set of
substitutions of the form $[\al_1=_{a_1}M_1, ...,
  \al_n=_{a_n}M_n]$  where the type of the $\al_i$ is $\neg A$.
\end{definition}

\begin{remark}

Remember that the type of $\al$ is not the same in $N$ and in
$N[\al=_{a}M]$. The previous definition may thus be considered as
ambiguous. When  we consider the term $N[\sig]$ where $\sig \in
\Sigma_A$, we assume that $N$ (and not $N[\sig]$) is typed in the
context where the $\al_i $ have type $ A$. Also note that
considering $N[\al=_aM]$ implies that the type of $M_a$ is $A$.
\end{remark}

\begin{lemma}\label{crux'}
Let $n$ be an integer and $A$ be a type such that $lg(A)=n$. Let
$N,P$ be terms and $\tau \in \Sigma_A$. Assume that,
\begin{itemize}
  \item  for every $M,N \in SN$ such that $lg(type(N))<n$,
$M[x:=N] \in SN$.
  \item $N[\tau] \in SN$ but $N[\tau][\delta =_aP] \not \in SN$.
  \item $\delta$ is free and has type $\neg A$ in $N$ but $\delta$ is not free in
$Im(\tau)$.
\end{itemize}

\noindent Then,  there is $N'\prec N$ and $\tau' \in \Sigma_A$
such that
 $N'[\tau'] \in SN$ and
 $P\langle a = N'[\tau']\rangle\not\in
  SN$.

\end{lemma}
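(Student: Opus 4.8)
The plan is to mimic closely the proof of Lemma~\ref{1crux'} (and its untyped ancestor Lemma~\ref{crux}), adapting the bookkeeping to the address-substitutions $[\delta=_aP]$ and using Lemma~\ref{l8b} in place of Lemma~\ref{1l30} whenever an application has to be reduced. First I would note that, since $N[\tau]\in SN$ and $N[\tau][\delta=_aP]\not\in SN$ while $\delta$ is not free in $Im(\tau)$, the offending redexes must genuinely involve the new occurrences of $(\delta\;\cdot)$ coming from $N$; in particular the witness $N'$ we are looking for will be a sub-term in the $\prec$ sense of some reduct of $N$. As in Lemma~\ref{1crux'}, set $\mathcal U=\{U / U\preceq N\}$ and $\mathcal V=\{V / V\preceq N_i \text{ for some } i\}$ where $Im(\tau)=\{N_1,\dots,N_k\}$, and define inductively two classes of substitutions: $\Sigma_m$, built from $\es$ by adding entries $[\beta=_aV[\theta]]$ with $V\in\mathcal V$, $\theta\in\Sigma_n$, $\beta$ of type $\neg A$; and $\Sigma_n$, built from $\es$ by adding entries $[\alpha=_bU[\rho]]$ with $U\in\mathcal U$, $\rho\in\Sigma_m$, $\alpha$ of type $\neg A$. (The addresses $a,b$ here are arbitrary, exactly as in Lemma~\ref{l8b}.)

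Then, denoting by $C$ the conclusion (there is $N'\prec N$ and $\tau'\in\Sigma_A$ with $N'[\tau']\in SN$ and $P\langle a=N'[\tau']\rangle\not\in SN$), I would prove the strengthened statement: (1) if $U\in\mathcal U$, $\rho\in\Sigma_m$, $U[\rho]\in SN$ and $U[\rho][\delta=_aP]\not\in SN$, then $C$ holds; (2) likewise with $V\in\mathcal V$, $\theta\in\Sigma_n$. The proof is a simultaneous induction on $\eta c(U[\rho])$, resp.\ $\eta c(V[\theta])$. The case analysis follows the usual pattern: if $U$ is an abstraction $\la x\,U'$ or $\mu\alpha\,U'$, or $U=(\delta\;U_1)$, or $U=(\alpha\;U_1)$, one descends into the immediate sub-term(s) (and, for $U=(\alpha\;U_1)$ with $\alpha\in dom(\rho)$, one may instead descend into $\rho(\alpha)=V[\theta]$, using clause~(2)); the interesting case is $U=(U_1\;U_2)$ with both $U_i[\rho][\delta=_aP]\in SN$, where one applies Lemma~\ref{l8b} to see how a $\mu$ got created at the root.

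The main obstacle is exactly that last subcase, and it is the reason the hypothesis ``$M[x:=N]\in SN$ whenever $lg(type(N))<n$'' is present. Lemma~\ref{l8b} leaves three possibilities for $U[\rho][\delta=_aP]\ras\mu\alpha\,Q'$: either a $\mu$ was already visible in $U[\rho]$, in which case we fold the new application into an address-substitution and recurse; or some $N_j$ in the image of the ambient substitution produced the $\mu$ via a variable occurrence, in which case we recurse on that $N_j$-reduct with clause~(2); or the $\mu$ arose from a $\beta$-redex in which the control between the function part and the argument part was lost, and the created $\mu$-variable carries a type strictly shorter than $n$. In that last situation one cannot continue by the induction on $\eta c$; instead one writes the non-$SN$ term as $M_1[y:=M_2]$ with $lg(type(M_2))<n$ and invokes the hypothesis $H$ to derive a contradiction, exactly as the analogous step in Lemma~\ref{1crux'} does. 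A secondary bit of care is needed to keep the addresses consistent when passing from $U_i$ to $U=(U_1\;U_2)$ (prefixing $l$ or $r$) and to check that the resulting substitution still lies in $\Sigma_A$, i.e.\ that all newly introduced classical variables have type $\neg A$; this is routine from the definitions of $\Sigma_m$, $\Sigma_n$ and the remark preceding the lemma. Finally $C$ for the lemma itself follows by applying~(1) to $N$ and $\tau$, noting $N$ is proper because $N[\tau][\delta=_aP]\not\in SN$.
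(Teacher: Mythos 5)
Your overall strategy (strengthen the statement over two mutually defined classes of substitutions, induct on $\eta c$, use lemma \ref{l8b} when a $\m$ appears at the root of an application, and appeal to the hypothesis (H) when the created variable has a type of length $<n$) is the right one, but your definition of the two classes is the one of lemma \ref{crux}, and that is precisely what does not work here. You let both classes consist of address-substitutions: subterms $U \peq N$ carry entries $[\be =_a V[\theta]]$, and subterms $V \peq M_i$ (the images of $\tau$) carry entries $[\al =_b U[\rho]]$. But material of $N$ does not enter the terms $M_i$ by an address-substitution: if $(\al_i \; U_1)$ occurs in $N$ and $\tau$ contains $[\al_i =_{a_i} M_i]$, the result is $(\al_i \; M_i\langle a_i = U_1[\rho]\rangle)$, i.e. the subterm of $M_i$ at the \emph{fixed} address $a_i$ is replaced by $U_1[\rho]$ --- an ordinary term substitution for a fresh variable at that position, not a replacement of the occurrences $(\al \; W)$ in the sense of definition \ref{def}. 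Consequently, at the step of your induction where the infinite reduction goes on inside $\rho(\al)=V[\theta]$ and you must descend into $V$, the information $\langle a = U_1[\rho]\rangle$ is not expressible in your second class, so clause (2) of your strengthened statement cannot be invoked; moreover the crucial new case, where $V=(V_1\;V_2)$ and $V_1 \ras (x \; \overrightarrow{W})$ with $x$ a plugged position in \emph{function} position (the case treated via lemma \ref{l8a} and (H)), cannot even be formulated, since your substitutions on the ${\cal V}$-side have only $\m$-variables in their domain.

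The paper's proof repairs exactly this point and flags it explicitly: the ${\cal U}$-side (subterms of $N$) carries all the address-substitutions $[\al =_a V[\sig]]$ with $\al$ of type $\neg A$, whereas the ${\cal V}$-side (subterms of the images of $\tau$) carries only $\l$-substitutions $[x := U[\rho]]$ with $x$ of type $A$, reflecting the identity $M\langle a = W\rangle = M'[x_a := W]$ noted in the remark before lemma \ref{thm}. With that asymmetric choice, the descent described above is recorded by extending $\sig$ with $[x_{a} := U_1[\rho]]$, and the $\be$- and created-$\m$ cases close (using (H) exactly where you intended). As written, your simultaneous induction does not go through without this structural change.
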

\begin{proof}
The proof looks like the one of lemma \ref{crux}. Denote by (H)
the first assumption i.e. for every $M,N \in SN$ such that
$lg(type(N))<n$, $M[x:=N] \in SN$.

Let $\tau =[\al_1=_{a_1}M_1, ...,
  \al_n=_{a_n}M_n]$, ${\cal U}=\{U \; / \; U \peq N\}$ and
${\cal V}=\{V \; / \; V \peq M_i$ for some $i\}$. Define
inductively the sets $\Sigma_m$ and $\Sigma_n$ of substitutions by
the following rules:

$\rho \in \Sigma_n$ iff $ \rho = \es$ or $\rho=\rho'  + [\al =_a
V[\sig]]$ for some $V \in {\cal V} $, $\sig  \in \Sigma_m$, $\rho'
\in \Sigma_n$ and $\al$ has type $\neg A$.

$\sig \in \Sigma_m$ iff $ \sig = \es$ or $\sig=\sig' + [x:=
U[\rho]]$ for some $U \in {\cal U}$, $\rho  \in \Sigma_n$, $\sig'
\in \Sigma_m$ and $x$ has type $A$.

\noindent Denote by C the conclusion of the lemma. We prove
something more general.

\noindent (1) Let $U \in {\cal U}$ and $\rho \in \Sigma_n$. Assume
$U[\rho]\in SN$ and $U[\rho][\delta=_aP]\not\in SN$. Then, C
holds.

\noindent (2) Let $V \in {\cal V}$ and $\sig \in \Sigma_m$. Assume
$V[\sig]\in SN$ and $V[\sig][\delta=_aP]\not\in SN$. Then, C
holds.

Note that the definitions of the sets $\Sigma_n$ and $\Sigma_m$
are not the same as the ones of lemma \ref{crux}. We gather here
in $\Sigma_n$ all the $\mu\mu'$-substitutions getting thus the new
substitutions of definition \ref{def} and we put in $\Sigma_m$
only the $\l$-substitutions.

The conclusion C follows from (1) with $N$ and $\tau$. The properties
(1) and (2) are proved by a simultaneous induction on $\eta
c(U[\rho])$ (for the first case) and $\eta c(V[\tau])$ (for the second
case).

 The proof is by case analysis as in
lemma \ref{crux}. We only consider the new case for $V[\sig]$,
i.e.  when $V=(V_1 \; V_2)$ and $V_i[\sig][\delta=_aP]\in SN$. The
other ones are done essentially in the same way as in lemma
\ref{crux}.

\medskip

\noindent - Assume first the interaction between $V_1$ and $V_2$
is a $\be$-reduction. If $V_1 \ras \l x V'_1$, the result follows
from the induction hypothesis with $V'_1[x:=V_2][\sig]$.
Otherwise, by lemma \ref{l8a}, $V_1 \ras (x \;
\overrightarrow{W})$. Let $\sig(x)=U[\rho]$. Then  $(U[\rho]
\;\overrightarrow{W}[\sig]) \ras \l y Q$ and
$Q[y:=V_2[\sig]][\delta=_aP]\not\in SN$. But, since the type of $x$ is
$A$, the type of $y$ is less than $A$ and since $Q[\delta=_aP]$ and
$V_2[\sig][\delta=_aP]$ are in $SN$ this contradicts (H).

\noindent - Assume next the interaction between $V_1$ and $V_2$ is
a $\m$ or $\m'$-reduction.  We consider only the case $\m$ (the
other one is similar). If $V_1 \ras \m \al V'_1$, the result
follows from the induction hypothesis with
$V'_1[\al=_rV_2][\sig]$. Otherwise, by lemma \ref{l8b}, there are
two cases to consider.

- $V_1 \ras Q$, $Q_c=x$ for some address $c$ in $Q$ and $x \in
dom(\sig)$, $\sig(x)= U[\rho]$, $U[\rho] \ras \m\al U_1$ and
$U_1[\al=_cQ[\sig]] [\al=_r V_2[\sig]][\delta=_aP]\not\in
SN$. By lemma \ref{new}, we have $U \ras \m\al U_2$ and $U_2[\rho]
\ras U_1$, then $U_2[\rho][\al=_cQ[\sig]] [\al=_r V_2[\sig]][\delta=_aP]\not\in
SN$. Let $V'=(Q \; V_2)$ and $b=l::c$. The result follows then from
the induction hypothesis with $U_2[\rho']$ where $\rho'=\rho +
[\al=_bV'[\sig]]$.

- $V_1 \; \ras Q$, $Q_c[\sig][\delta=_aP] \ras \m \al R$ for some
address $c$ in $Q$ such that $lg(type(Q_c)) < n$,
$R[\al=_cQ[\sig][\delta=_aP]] [\al=_r
V_2[\sig][\delta=_aP]]\not\in SN$. Let $V'=(Q' \; V_2)$ where $Q'$
is the same as $Q$ but $Q_c$ has been replaced by a fresh variable
$y$ and $b=l::c$. Then $R[\al=_bV'[\sig][\delta=_aP]] \not \in
SN$. Let $R'$ be such that $R' \prec R$, $R'
[\al=_bV'[\sig][\delta=_aP]] \not \in SN$ and $\eta c(R')$ is
minimal. It is easy to check that $R'=(\al \; R'')$,
$R''[\al=_bV'[\sig][\delta=_aP]] \in SN$ and
$V'[\sig'][\delta=_aP] \not\in SN$ where $\sig' =\sig
+ [y:=R''[\al=_bV'[\sig]]]$. If $V'[\sig][\delta=_aP] \not\in SN$, we
get the result by the induction hypothesis since $\eta c(V'[\sig])
< \eta c (V[\sig])$. Otherwise this contradicts the assumption (H)
since $V'[\sig][\delta=_aP], R''[\al=_bV'[\sig][\delta=_aP]] \in
SN$, $V'[\sig][\delta=_aP][y:= R''[\al=_bV'[\sig][\delta=_aP]]]
\not\in SN$ and the type of $y$ is less than $n$.
\end{proof}

\begin{lemma}\label{thm}
If $M,N \in SN$, then $M[x:=N] \in SN$.
\end{lemma}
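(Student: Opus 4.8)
The plan is to prove, by induction on the type, the following statement, of which Lemma~\ref{thm} is the special case $k=1$, $\tau_1=\es$, $A=type(N)$: for every type $A$ with $lg(A)=n$, every term $M$, and all terms $N_1,\ptv N_k$ of type $A$ together with substitutions $\tau_1,\ptv\tau_k\in\Sigma_A$ such that $N_i[\tau_i]\in SN$ for each $i$, we have $M[x_1:=N_1[\tau_1],\ptv x_k:=N_k[\tau_k]]\in SN$. The outer induction is on $n=lg(A)$; for types of length $<n$ this statement is precisely the hypothesis $(H)$ under which Lemma~\ref{crux'} was proved, so that lemma becomes available exactly when needed --- the situation being completely parallel to the use of Lemma~\ref{1crux'} inside the proof of Lemma~\ref{1thm}. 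Inside a fixed $A$ I would run a secondary induction on $(\eta(M),cxty(M),\Sigma\,\eta(N_i),\Sigma\,cxty(N_i))$ in the lexicographic order, where in the last two components each occurrence of a substituted variable is counted (so if $k=1$ and $x_1$ has $m$ occurrences the fourth component is $m\cdot\eta(N_1)$). Write $\sigma$ for the substitution under consideration.

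The only non-immediate case is $M=(R\;S)$. By the induction hypothesis $R[\sigma],S[\sigma]\in SN$, so if $M[\sigma]\not\in SN$, Lemma~\ref{l30} leaves three possibilities for the first bad reduction: $R[\sigma]\ras\la y\,P$ with $P[y:=S[\sigma]]\not\in SN$, or $R[\sigma]\ras\m\al\,P$ with $P[\al=_rS[\sigma]]\not\in SN$, or the symmetric $\m'$-case with $S$. Lemmas~\ref{l8a} and~\ref{l8b} then split each case into a subcase where the redex is created inside $R$ (or $S$) and a subcase where it comes through one of the substituted $N_i[\tau_i]$. In the first subcase, $M$ reduces --- with $\sigma$ untouched --- to a term $M'$ (one of $R_1[y:=S]$, $\m\al\,R_1[\al=_rS]$, $\m\al\,S_1[\al=_lR]$) with $M'[\sigma]\not\in SN$ and $\eta(M')<\eta(M)$, which contradicts the induction hypothesis since $lg(A)$ is unchanged. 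The one remaining ``easy'' subcase of Lemma~\ref{l8a}, namely $R\ras(x_i\;\overrightarrow{Q})$ with $(N_i[\tau_i]\;\overrightarrow{Q[\sigma]})\ras\la y\,P$, is handled by $(H)$: here $P$ and $S[\sigma]$ are in $SN$ and, since $N_i[\tau_i]$ has type $A$, the variable $y$ has a type of length $<n$, whence $P[y:=S[\sigma]]\in SN$ --- a contradiction.

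The delicate subcases are those of Lemma~\ref{l8b}(2) and~(3), where a $\m$ is produced through a substituted $N_i[\tau_i]$; this is the phenomenon, illustrated by the example preceding Lemma~\ref{lammu}, that a $\be$-substitution may create a $\m$ while destroying the alternation between $M$ and $N$. When $N_i[\tau_i]\ras\m\al\,N_i'$, I would use Lemma~\ref{new} to pull the $\m$ back to $N_i$ itself, writing $N_i\ras\m\al\,N_i''$ with $N_i''[\tau_i]\in SN$; then, assembling with the address machinery of Definition~\ref{def} the relevant argument out of $Q$ and $S$, one reaches a situation $N_i''[\tau_i]\in SN$, $N_i''[\tau_i][\al=_bW]\not\in SN$, $\tau_i\in\Sigma_A$, with $\al$ free of type $\neg A$ in $N_i''$, to which Lemma~\ref{crux'} applies and yields some $N'\prec N_i''$ and $\tau'\in\Sigma_A$ with $N'[\tau']\in SN$ but $W\langle b=N'[\tau']\rangle\not\in SN$. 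This last term can be written as $M''[\sigma']$, where $M''$ is obtained from a reduct of $M$ by replacing one designated occurrence of $x_i$ by a fresh variable $y$, and $\sigma'=\sigma+[y:=N'[\tau']]$; one checks that $N'$ has type $A$ (it lands in a position of type $A$), that $\tau'\in\Sigma_A$, that $N'[\tau']\in SN$, and that $\eta(M'')\le\eta(M)$, $cxty(M'')\le cxty(M)$ while $\eta c(N')<\eta c(N_i)$, so the measure strictly decreases --- contradiction. For Lemma~\ref{l8b}(3), where the $\m$ emerges from a subterm $Q_c$ of type of length $<n$, one extracts a minimal non-$SN$ witness of the form $(\al\;R'')$ and then concludes either by the induction hypothesis or by $(H)$, exactly as at the end of the proof of Lemma~\ref{crux'}.

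The hard part is this last family of subcases, i.e.\ controlling the migration of a $\m$ out of a substituted term whose interaction with $M$ is no longer aligned. The work there is bookkeeping: checking that $\Sigma_A$ is closed under the operations performed (adjoining entries such as $[\al=_{[l::c]}V'[\sigma]]$), checking that $\eta c$ --- and not merely $\eta$ --- really decreases when a reduced subterm is absorbed into a fresh variable (cf.\ the remark on $\prec$), and making sure the types line up so that $(H)$ can be invoked at the two points where it is needed. Conceptually nothing new happens: this is the $\la\m$-analogue of the argument already carried out for the $\overline{\la}\m\tilde{\m}$-calculus in Lemmas~\ref{1crux'}--\ref{1thm1}, only made heavier by the $\be$-rule and by the address-indexed substitutions of Definition~\ref{def}.
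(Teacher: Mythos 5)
Your proposal is correct and follows essentially the same route as the paper's proof: the same strengthened statement, the same five-component lexicographic measure $(lg(A),\eta(M),cxty(M),\Sigma\,\eta(N_i),\Sigma\,cxty(N_i))$, the same case analysis via Lemmas \ref{l30}, \ref{l8a} and \ref{l8b}, the same pull-back of the created $\m$ and application of Lemma \ref{crux'} with the fresh-variable replacement, and the outer induction on $lg(A)$ playing the role of the hypothesis $(H)$. The only point to tighten is the bookkeeping in the subcase of Lemma \ref{l8b}(2): $cxty(M'')\le cxty(M)$ need not hold when $M_1$ reduces properly to $Q$ (reduction can duplicate), so one should split as the paper does --- if $M_1 \tr^+ Q$ then $\eta(M'')<\eta(M)$ already gives the decrease, and otherwise $M''$ is $M$ up to renaming and the decrease comes from $\eta c(N')<\eta c(N_i)$ in the last two components.
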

\begin{proof}
We prove something a bit more general: let $A$ be a type, $M, N_1,
..., N_k$ be terms and $\tau_1, ..., \tau_k $ be substitutions in
$\Sigma_A$. Assume that, for each $i$,  $N_i$ has type $A$ and
$N_i[\tau_i] \in SN$. Then $M[x_1:=N_1[\tau_1], \ptv
x_k:=N_k[\tau_k]] \in SN$. This is proved by induction on $(lg(A),
\eta(M), cxty(M)$, $\Sigma \; \eta(N_i), \Sigma \; cxty(N_i))$
where, in $\Sigma \; \eta(N_i)$ and $\Sigma \; cxty(N_i)$, we
count each occurrence of the substituted variable. For example if
$k=1$ and $x_1$ has $n$ occurrences, $\Sigma \; \eta(N_i)=n.
\eta(N_1)$.

 If $M$ is $\lambda y M_1$ or $(\alpha \; M_1)$ or $\mu\alpha M_1$
or a variable, the result is trivial.
  Assume then that $M=(M_1 \;
M_2)$. Let $\sigma = [x_1:=N_1[\tau_1], \ptv x_k:=N_k[\tau_k]]$.
By the induction hypothesis, $M_1[\sigma], M_2[\sigma] \in SN$. By
lemma \ref{l30} there are 3 cases to consider.

\begin{itemize}
  \item  $M_1[\sigma] \ras \lambda y P$ and $P[y:=M_2[\sigma]] \not\in
SN$. By lemma \ref{l8a}, there are two cases to consider.

\begin{itemize}
  \item $M_1 \ras \lambda y Q$ and $Q[\sigma] \ras P$. Then
  $Q[y:=M_2][\sigma]= Q[\sigma][y:=M_2[\sigma]] \ras
  P[y:=M_2[\sigma]]$ and, since $\eta(Q[y:=M_2]) < \eta(M)$, this contradicts the induction hypothesis.
  \item $M_1 \ras (x_i \; \overrightarrow{Q})$ and $(N_i \;
  \overrightarrow{Q[\sigma]}) \ras \lambda y P$. Then, since the type of $N_i$ is $A$,
 $lg(type(y))<lg(A)$. But $P, M_2[\sigma]
  \in SN$ and $P[y:=M_2[\sigma]] \not\in
SN$. This contradicts the induction hypothesis.

\end{itemize}

  \item $M_1[\sigma] \ras \mu\alpha P$ and $P[\alpha=_rM_2[\sigma]]
  \not\in SN$. By lemma \ref{l8b}, there are three cases to consider.

\begin{itemize}
  \item $M_1 \ras \mu\alpha Q$ and $Q[\sigma] \ras P$. Then, $Q[\alpha=_rM_2][\sigma]= Q[\sigma][\alpha=_rM_2[\sigma]] \ras
  P[\alpha=_rM_2[\sigma]]$ and, since $\eta(Q[\alpha=_rM_2]) < \eta(M)$, this contradicts the induction hypothesis.
  \item $M_1 \ras  Q$, $N_i [\tau_i] \ras \m\alpha L'$ and  $Q_a=x_i$ for some
 address $a$ in $Q$ such that $L'[\alpha=_aQ[\sigma]] \ras
P$ and thus $L'[\al=_bM'[\sigma]]  \not\in SN$ where $b=(l::a)$ and
$M'=(Q \; M_2)$.

By lemma \ref{l7}, $N_i \ras \mu\al L$ and $L[\tau_i] \ras L'$.
Thus, $L[\tau_i][\al=_bM'[\sigma]]  \not\in SN$. By lemma
\ref{crux'}, there is $L_1 \prec L$ and $\tau'$ such that
$L_1[\tau'] \in SN$ and $M'[\sigma]\langle
b=L_1[\tau']\rangle\not\in SN$. Let $M''$ be $M'$ where the
variable $x_i$ at the address $b$ has been replaced by the fresh
variable $y$ and let $\sig_1=\sig + [y:=L_1[\tau']]$. Then
$M''[\sig_1]=M'[\sigma]\langle b=L_1[\tau']\rangle\not\in SN$.

If $M_1 \ra^+  Q$ we get a contradiction from the induction
hypothesis since $\eta(M'') < \eta(M)$. Otherwise, $M''$ is the
same as $M$ up to the change of name of a variable and $\sigma_1$
differs from $\sigma$ only at the address $b$. At this address,
$x_i$ was substituted in $\sigma$ by $N_i[\tau_i]$ and in
$\sigma_1$ by $L_1[\tau']$ but $\eta c(L_1) < \eta c(N_i)$ and
thus we get a contradiction from the induction hypothesis.

  \item $M \ras  Q$,  $Q_a[\sigma] \ras \m \al L$ for some
 address $a$ in $Q$ such that $lg(type(Q_a)) < lg(A)$  and
$L[\al=_aQ[\sigma]] \ras P$. Then, $L[\al=_bM'[\sigma]] \not \in
SN$ where $b=[l::a]$ and $M'=(Q \; M_2)$.

By lemma \ref{crux'}, there is an $ L'$ and  $\tau'$ such that
$L'[\tau'] \in SN$ and $M'[\sigma]\langle b=L'[\tau'] \rangle\not
\in SN$. Let $M''$ be $M'$ where the variable $x_i$ at the address
$b$ has been replaced by the fresh variable $y$. Then $M''[\sig][
y:=L'[\tau']]=M'[\sigma]\langle b=L'[\tau']\rangle\not\in SN$.

But $\eta(M'') \leq \eta(M)$ and $cxty(M'') < cxty(M)$ since,
because of its type, $Q_a$ cannot be a variable and thus, by the
induction hypothesis, $M''[\sigma]\in SN$. Since $M''[\sigma ][
y:=L'[\tau']]\not \in SN$ and $lg(type(L')) < lg(A)$, this
contradicts the induction hypothesis.

\end{itemize}

  \item $M_2[\sigma] \ras \mu\alpha P$ and $P[\alpha=_lM_1[\sigma]]
  \not\in SN$. This case is similar to the previous one.\qed
\end{itemize}
\end{proof}

\begin{theorem}\label{th}
Every typed term is in $SN$.
\end{theorem}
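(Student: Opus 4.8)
The plan is to mimic the proof of Theorem~\ref{1thm1}: reduce the strong normalization of an arbitrary typed term to the single substitution lemma~\ref{thm}, which has already been established. Concretely, I would argue by induction on the structure of the term $M$. If $M$ is a variable, or of the form $\la x\,M_1$, $(\al\;M_1)$ or $\mu\al\,M_1$, then $M\in SN$ follows immediately from the induction hypothesis applied to the immediate subterm(s): a reduction of $\la x\,M_1$ (resp. $(\al\;M_1)$, $\mu\al\,M_1$) either takes place inside $M_1$, or is a $\theta$-step $\mu\al(\al\;M_1)\tr M_1$ with $\al$ not free in $M_1$ — but we have excised $\rho$ and $\theta$ in this section, so only the internal reductions remain and an infinite one would give an infinite reduction of $M_1$, contradicting $M_1\in SN$.

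The only real case is $M=(M_1\;M_2)$. By the induction hypothesis $M_1,M_2\in SN$, so it suffices to show that $M_1,M_2\in SN$ implies $(M_1\;M_2)\in SN$. Here I would invoke Lemma~\ref{thm}: writing $(M_1\;M_2)=(y\;z)[y:=M_1][z:=M_2]$ with $y,z$ fresh variables — or, more carefully, first substituting $M_1$ for $y$ in $(y\;z)$ and then $M_2$ for $z$ — each substitution is of the shape covered by Lemma~\ref{thm} (take the outer substitution $\tau$ in $\Sigma_A$ to be empty), so $(y\;z)[y:=M_1]=(M_1\;z)\in SN$ and then $(M_1\;z)[z:=M_2]=(M_1\;M_2)\in SN$. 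This is exactly the pattern used in the proof of Theorem~\ref{1thm1} from Lemma~\ref{1thm}.

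The subtlety worth flagging is that Lemma~\ref{thm} is stated for a $\beta$-substitution $M[x:=N]$, and when I apply it to get $(M_1\;z)[z:=M_2]$ I am substituting for the variable $z$ which sits in operand position; the lemma makes no positional restriction on $x$, so this is fine, and the required typing hypothesis ($N_i$ has type $A$, $N_i[\tau_i]\in SN$ with $\tau_i$ empty) is met because $M_1$ and $M_2$ are well typed and in $SN$ by induction. So the hard part is genuinely already behind us — it was Lemma~\ref{crux'} feeding into Lemma~\ref{thm} — and this last step is the same trivial structural induction as in the $\overline{\la}\mu\tilde\mu$ case. I would therefore expect the proof to read essentially: ``By induction on $M$. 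It is enough to show that if $M_1,M_2\in SN$ then $(M_1\;M_2)\in SN$; since $(M_1\;M_2)=(y\;z)[y:=M_1][z:=M_2]$ with $y,z$ fresh, this follows from lemma~\ref{thm}.''
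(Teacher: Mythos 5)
Your proposal is correct and is essentially the paper's own proof of Theorem~\ref{th}: an induction on the term, reduced to showing that $M_1,M_2\in SN$ implies $(M_1\;M_2)\in SN$ by writing $(M_1\;M_2)=(x\;y)[x:=M_1][y:=M_2]$ with fresh variables and applying Lemma~\ref{thm} twice. Your additional remarks (the easy cases via the induction hypothesis, and the observation that the lemma places no positional restriction on the substituted variable) are accurate but do not change the argument.
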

\begin{proof}
 By induction on the term. It is enough to show that if $M,N \in SN$, then $(M \; N)
\in SN$. Since $(M \; N)=(x \; y)[x:=M][y:=N]$ where $x,y$ are
fresh variables, the result follows by applying  theorem \ref{thm}
twice and the induction hypothesis.
\end{proof}

\section{Remarks and future work}

\subsection{Why the usual candidates do not work ?}\label{s3}

In \cite{Par2}, the proof of the strong normalization of the
$\la\m$-calculus is done by using the {\em usual} (i.e. defined
without a fix-point operation) candidates of reducibility. This
proof could be easily extended to the symmetric $\la\m$-calculus
if we knew the following properties for the un-typed calculus:

\begin{enumerate}
  \item If $N$ and $(M[x:=N] \; \overrightarrow{P})$ are in $SN$,
  then so is $(\la x M \; N \; \overrightarrow{P})$.
  \item If $N$ and $(M[\al=_rN] \; \overrightarrow{P})$ are in $SN$,
  then so is $(\m \al M \; N \; \overrightarrow{P})$.
  \item If $\overrightarrow{P}$ are in $SN$, then so is $(x \; \overrightarrow{P})$.
\end{enumerate}

These properties are easy to show for the $\be\m$-reduction but
they were not known for the $\be\m\m'$-reduction.

The third property is true but the properties (1) and (2) are
false. The proof of (3) and the counter-examples for (1) and (2)
can be found in \cite{craco}.

\subsection{Future work}\label{s7}

 We believe that our technique, will allow to give explicit bounds for
  the length of the reductions of a typed term. This is a goal we
  will try to manage.


\begin{thebibliography}{99}
\bibitem{BaBe} F. Barbanera and S. Berardi. {\em A symmetric
lambda-calculus for classical program extraction}. In M. Hagiya
and J.C. Mitchell, editors, Proceedings of theoretical aspects of
computer software, TACS'94. LNCS (789), pp. 495-515. Springer
Verlag, 1994.

\bibitem{Bat} P. Battyanyi. {\em Normalization results for the symmetric $\lambda\mu$- calculus.}
Private communication. To appear in his PhD thesis.

\bibitem{CoMu} R. Constable and C. Murthy. {\em Finding computational
content in classical proofs.} In G. Huet and G. Plotkin, editors,
Logical Frameworks, pp. 341-362, Cambridge University Press, 1991.

\bibitem{cuhe} P.L. Curien and H. Herbelin. {\em The Duality of
Computation}. Proc. International Conference on Functional
Programming, September 2000, Montréal, IEEE, 2000.

\bibitem{deG3} P. de Groote.  {\em A CPS-translation of the lambda-mu-calculus}.
In S. Tison, editor, 19th International Colloquium on Trees in
Algebra and Programming, CAAP'94, volume 787 of Lecture Notes in
Computer Science, pp 85-99. Springer, 1994.

\bibitem{deG4} P. de Groote.  {\em A simple calculus of exception handling}. In M.
Dezani and G. Plotkin, editors, Second International Conference on
Typed Lambda Calculi and Applications, TLCA'95, volume 902 of
Lecture Notes in Computer Science, pp. 201-215. Springer, 1995.

\bibitem{dav1} R. David. {\em Normalization without
reducibility}. Annals of Pure and Applied Logic (107), pp.
121-130, 2001.

\bibitem{dn1} R. David and K. Nour. {\em A short proof of the strong normalization of the simply
typed $\la\m$-calculus}. Schedae Informaticae n°12, pp. 27-34,
2003.

\bibitem{dn2} R. David and K. Nour. {\em A short proof of the strong normalization of classical natural deduction with disjunction}.
The Journal of Symbolic Logic n° 68.4, pp. 1277 - 1288, 2003.

\bibitem{craco} R. David and K. Nour. {\em Why the usual candidates of
reducibility do not work for the symetric $\la \mu$-calculus}.
Electronic Notes in Computer Science vol 140, pp 101-111, 2005.

\bibitem{tlca} R. David and K. Nour. {\em Arithmetical proofs of some strong normalization results for the symmetric
 $\lambda\mu$-calculus}. TLCA'05, LNCS 3461, pp. 162-178, 2005.

\bibitem{Dou} D. Dougherty, S.Ghilezan, P.Lescanne, S. Likavec.
{\em Strong normalization of the classical dual sequent calculus}.
LPAR'05  LNCS 3835 p 169-183.

\bibitem{Fel}  M. Felleisen, D.P. Friedman, E.E. Kohlbecker and  B.F.
Duba. {\em A Syntactic Theory of Sequential Control.}  Theoretical
 Computer Science 52, pp. 205-237, 1987.

\bibitem{Gir} J.-Y. Girard. {\em A new constructive logic: classical logic.}
MSCS (1), pp. 255-296, 1991.

\bibitem{Gri} T.G. Griffin. {\em A formulae-as-types notion of control. } POPL'90 pp 47-58.

\bibitem{JoMa} F. Joachimski and R. Matthes. {\em Short proofs of
normalization for the simply-types $\lambda$-calculus, permutative
 conversions and Godel's T.} Archive for Mathematical Logic
42, pp. 59-87, 2003.

\bibitem{Kri} J.-L. Krivine. {\em Classical logic, storage operators and
2nd order lambda-calculus.} Annals of Pure and Applied Logic (68),
pp. 53-78, 1994.

\bibitem{Mur1} C.R. Murthy. {\em An evaluation semantics for classical
proofs.} In Proceedings of the sixth annual IEEE symposium on
logic in computer science, pp. 96-107, 1991.


\bibitem{Nou1} K. Nour. {\em La valeur d'un entier classique en
  $\l\m$-calcul.} Archive for Mathematical Logic (36), pp. 461-471,
  1997.

\bibitem{Nou2} K. Nour. {\em A non-deterministic classical logic (the $\l\m^{++}$-calculus).}
Mathematical Logic Quarterly (48), pp. 357 - 366, 2002.

\bibitem{Nou3} K. Nour and K. Saber. {\em A semantical proof of the
  strong normalization theorem of full propositionnal classical
  natural deduction}. Archive for Mathematical Logic vol 45, pp 357-364, 2006.


\bibitem{Pa01} M. Parigot. {\em Free Deduction: An Analysis of
"Computations" in Classical Logic.}  Proceedings. Lecture Notes in
Computer Science, Vol. 592, Springer,  pp. 361-380, 1992.

\bibitem{Par1} M. Parigot. {\em $\lambda \mu$-calculus: An algorithm
interpretation of classical natural deduction.}
 Lecture Notes in Artificial Intelligence (624),
pp. 190-201. Springer Verlag, 1992.

\bibitem{Par3} M. Parigot. {\em
Strong normalization for second order classical natural
deduction}. In Proceedings, Eighth Annual IEEE Symposium on Logic
in Computer Science, pp. 39-46, Montreal, Canada, 19--23 June
1993. IEEE Computer Society Press.

\bibitem{Par4} M. Parigot. {\em Classical proofs as programs.}  In
 G. Gottlob, A. Leitsch, and D. Mundici, eds., Proc. of 3rd Kurt Godel
 Colloquium, KGC'93, vol. 713 of Lecture Notes in Computer Science,
 pp.  263-276. Springer-Verlag, 1993.

\bibitem{Par2} M. Parigot. {\em Proofs of strong normalization for second
order classical natural deduction.} Journal of Symbolic Logic, 62
(4), pp. 1461-1479, 1997.

\bibitem{polo} E. Polonovsky. {\em Substitutions explicites, logique et
normalisation}. PhD thesis. Paris 7, 2004.

\bibitem{Py} W. Py. {\em Confluence en $\la\m$-calcul.} PhD thesis. University of
Chamb\'ery, 1998.

\bibitem{ReSo} N.J. Rehof and M.H. Sorensen. {\em The
$\lambda_{\Delta}$-calculus.} In M. Hagiya and J.C. Mitchell,
editors, Proceedings of the international symposium on theoretical
aspects of computer software, TACS'94, LNCS (789), pp. 516-542.
Springer Verlag, 1994.


\bibitem{wad2}  P. Wadler. {\em Call-by-value is dual to Call-by-name}. International Conference on Functional Programming.
 Uppsala, August 2003.

 \bibitem{wad1} P. Wadler. {\em Call-by-value is dual to Call-by-name. Re-loaded}. RTA'05, LNCS 3467, pp. 185-203, 2005


\bibitem{Yam} Y. Yamagata. {\em Strong Normalization of Second Order Symmetric Lambda-mu Calculus.}
TACS 2001, Lecture Notes in Computer Science 2215, pp. 459-467,
2001.
\end{thebibliography}
\end{document}